\documentclass[a4paper]{article}
\usepackage{amsmath, amssymb, amsthm, graphicx, 
 bm, comment}
\usepackage[arrow,curve,matrix,arc,2cell]{xy}
\UseAllTwocells



\usepackage{oldgerm}
\usepackage{multicol}

\usepackage[pdftex,colorlinks,urlcolor=blue,pdfstartview=FitH]{hyperref}
\usepackage[left=3.70cm, right=3.70cm, top=3.70cm, bottom=3.70cm]{geometry}

\def\dim{\mathop{\rm dim}}
\def\vdim{\mathop{\rm vdim}}
\def\Corr{\mathop{\rm Corr}}

\def\a{\alpha}

\newcommand{\ZZ}{\mathbb{Z}}
\newcommand{\mR}{\mathbb{R}}

\newcommand{\mU}{\ensuremath{\mathcal{U}}}

\newcommand{\mF}{\ensuremath{\mathcal{F}}}

\newcommand{\mM}{\ensuremath{\mathcal{M}}}
\newcommand{\mP}{\ensuremath{\mathcal{P}}}

\newtheorem{thm}{Theorem}[section]
\newtheorem{lem}[thm]{Lemma}

\newtheorem{cor}[thm]{Corollary}
\newtheorem{prop}[thm]{Proposition}
\newtheorem{rmk}[thm]{Remark}

\theoremstyle{definition}
\newtheorem{defn}[thm]{Definition}

\def\eq#1{{\rm(\ref{#1})}}
\def\cit#1{{\rm\cite{#1}}}

\def\lto{\longrightarrow}

\def\leq{\leqslant}
\def\geq{\geqslant}
\def\Ai{$A_\infty$}

\newcommand{\m}{\mathfrak{m}}
\newcommand{\forg}{\mathfrak{forg}}
\newcommand{\mc}{\mathfrak{c}}

\date{}

\begin{document}

\title{The K\"unneth theorem for the Fukaya algebra of a product of Lagrangians}
\author{Lino Amorim}

\maketitle 

\begin{abstract}
Given a compact Lagrangian submanifold $L$ of a symplectic manifold $(M,\omega)$, Fukaya, Oh, Ohta and Ono construct  a filtered $A_\infty$-algebra $\mathcal{F}(L)$, on the cohomology of $L$, which we call the Fukaya algebra of $L$. In this paper we describe the Fukaya algebra of a product of two Lagrangians submanifolds $L_1\times L_2$. Namely, we show that $\mathcal{F}(L_1\times L_2)$ is quasi-isomorphic to $\mathcal{F}(L_1)\otimes_\infty \mathcal{F}(L_2)$, where $\otimes_\infty$ is the tensor product of filtered \Ai-algebras defined in \cit{Amo2}. As a corollary of this quasi-isomorphism we obtain a description of the bounding cochains on $\mF(L_1\times L_2)$ and of the Floer cohomology of $L_1\times L_2$.
\end{abstract}

\section{Introduction}
Lagrangian Floer cohomology, introduced by Floer in \cite{Flo}, has proven to be a very powerful tool in symplectic topology. This is specially true in the cases of exact or monotone Lagrangians. The general case is much harder because Floer cohomology might not be defined. This is the subject of the work of Fukaya, Oh, Ohta and Ono \cite{FOOO}. For each compact, relatively spin Lagrangian $L$, the authors construct a filtered $A_\infty$-algebra, on the singular chain complex of $L$. There is another version, defined in \cite{Fuk}, that uses the de Rham complex of $L$, which has the advantage of being strictly unital and cyclic. We will refer to this \Ai-algebra as the Fukaya algebra of $L$ and denote it by $\mF(L)$. This is a filtered \Ai-algebra structure on the de Rham complex of $L$, whose operations are defined using a compatible almost complex structure. Loosely speaking, the moduli space of pseudo-holomorphic disks with boundary in $L$ and $k+1$ boundary marked points defines a correspondence between $L^{\times k}$ and $L$. The resulting pull-push map defines the $A_\infty$-map $\m_k$. Then the $\m_0$ term is responsible for the obstructions to defining the Floer cohomology for $L$. More precisely, Floer cohomology can be defined only when there is a deformation of $\mF(L)$ such that $\m_0$ vanishes. Such deformations are given by solutions to the Maurer--Cartan equation on $\mF(L)$, these are called {\it bounding cochains}. A bounding cochain $b$ determines a deformation of the \Ai-maps $\m_k^b$, so that $\m_1^b$ is a differential and so we can define the Floer cohomology of the pair $(L,b)$ as the cohomology of this differential.

In this paper we will study $\mF(L)$ when $L$ is a product Lagrangian. Let $(M_i^{2n_i},\omega_i)$ be a $2n_i$-dimensional symplectic manifold (either compact or convex at infinity) and let $L_i^{n_i}\subseteq M$ be a compact, relatively spin (as defined in \cite[Chapter 8]{FOOO}) Lagrangian submanifold, for $i=1,2$. Then $L_1\times L_2$ is a relatively spin compact Lagrangian submanifold of $(M_1\times M_2,\omega_1\oplus \omega_2)$.
Our main result, Theorem \ref{intmain} below, states that we can describe $\mF(L_1\times L_2)$, in terms of $\mF(L_1)$ and $\mF(L_2)$, using the tensor product of filtered \Ai-algebras introduced in \cite{Amo2}.

\begin{thm}\label{intmain}
 Let $L_1$ and $L_2$ be compact, relatively spin Lagrangian submanifolds of the symplectic manifolds $(M_1,\omega_1)$ and $(M_2,\omega_2)$ respectively. Then we have the following quasi-isomorphism of filtered \Ai-algebras
 $$\mF(L_1 \times L_2) \simeq \mF(L_1) \otimes_\infty \mF(L_2),$$
where $\otimes_\infty$ is the tensor product of filtered \Ai-algebras defined in \cit{Amo2}.
 \end{thm}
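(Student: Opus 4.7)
The plan is to construct an explicit $A_\infty$-quasi-isomorphism from $\mF(L_1)\otimes_\infty \mF(L_2)$ to $\mF(L_1\times L_2)$ by exploiting the factorization of pseudo-holomorphic disks in $M_1\times M_2$ under a product almost complex structure. I would fix compatible almost complex structures $J_i$ on $M_i$ and take $J=J_1\oplus J_2$ on $M_1\times M_2$. A standard argument shows that a map $u=(u_1,u_2):(D,\partial D)\to (M_1\times M_2,L_1\times L_2)$ is $J$-holomorphic if and only if each component $u_i$ is $J_i$-holomorphic, so that the two disks share the same domain and the same cyclic configuration of $k+1$ boundary marked points. This yields a fiber product description
$$\mm_{k+1}(L_1\times L_2;\beta)\cong \mm_{k+1}(L_1;\beta_1)\times_{K_{k+1}}\mm_{k+1}(L_2;\beta_2)$$
for $\beta=(\beta_1,\beta_2)$, where $K_{k+1}$ is the Stasheff associahedron parametrizing configurations of $k+1$ cyclically ordered points on $\partial D$ modulo $PSL(2,\mR)$.

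Next, I would compute the operations of $\mF(L_1\times L_2)$ via pull-push along the evaluation maps. Using the Künneth isomorphism $\Omega^*(L_1)\,\widehat{\otimes}\,\Omega^*(L_2)\cong \Omega^*(L_1\times L_2)$, it suffices to test on inputs of product form. For such inputs the integration along the fiber over $K_{k+1}$ factorizes, expressing $\m_k$ on $\mF(L_1\times L_2)$ as a sum, indexed by the facet stratification of $K_{k+1}$ and weighted by appropriate Koszul signs, of products of operations $\m_{k_1}$ and $\m_{k_2}$ drawn from the two factors. This combinatorial sum is by construction precisely the defining formula for $\otimes_\infty$ from \cite{Amo2}, giving, at the unperturbed level, a strict identification of the two $A_\infty$-structures on product-type inputs.

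Since product almost complex structures are almost never regular, I would then upgrade this identification to a quasi-isomorphism using the de Rham perturbation framework of \cite{Fuk}. The strategy is to build continuous families of Kuranishi multi-sections on $\mm(L_i;\beta_i)$ separately and induce from them a compatible system on $\mm(L_1\times L_2;\beta)$ that preserves the fiber product structure up to $A_\infty$-homotopy. The resulting perturbed operations no longer agree with those of $\mF(L_1)\otimes_\infty \mF(L_2)$ on the nose, but they assemble into an explicit $A_\infty$-morphism $\phi:\mF(L_1)\otimes_\infty \mF(L_2)\to \mF(L_1\times L_2)$. To conclude, I would observe that modulo the Novikov parameter $\phi$ reduces to the classical Künneth quasi-isomorphism $H^*(L_1)\otimes H^*(L_2)\to H^*(L_1\times L_2)$, which forces $\phi$ to be a quasi-isomorphism of filtered $A_\infty$-algebras.

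The main obstacle will be the transversality step: generic perturbations destroy the product decomposition, so one must carefully design multi-sections that are compatible with both forgetful-type maps to the factor moduli spaces, and then verify that the codimension one strata of the perturbed moduli spaces still decompose in a way matching the algebraic $A_\infty$-relations of $\otimes_\infty$. Additional care will be needed to match orientations and Koszul signs, and to handle the bubbling configurations in which all energy lies in one factor while the other component is constant, as these could in principle contribute correction terms that must be shown either to cancel or to match the $\otimes_\infty$ combinatorics exactly.
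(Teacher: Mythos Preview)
Your approach differs substantially from the paper's and, as written, has a real gap at the step where you claim the fiber integration over the associahedron fiber product ``factorizes'' into the $\otimes_\infty$ formula.

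The paper does \emph{not} attempt to match the combinatorics of $\otimes_\infty$ directly. Instead it invokes an abstract algebraic criterion (Theorem~\ref{intcriterion}, proved in \cite{Amo2}): if $A$ and $B$ are \emph{commuting $A_\infty$-subalgebras} of $C$ in the sense of Definitions~\ref{subalgebra} and~\ref{comsubalg}, and the map $K(a\otimes b)=(-1)^{|a|}\mu_{2,0}(a,b)$ is injective and a quasi-isomorphism on $\mu_{1,0}$-cohomology, then $A\otimes_\infty B\simeq C$. Theorem~\ref{intmain} then follows immediately from Theorem~\ref{intcomsub} (that $\mF(L_1)$ and $\mF(L_2)$ are commuting subalgebras of $\mF(L_1\times L_2)$) plus the classical K\"unneth theorem for de~Rham cohomology. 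The geometric work is entirely in Theorem~\ref{intcomsub}, and its mechanism is a \emph{vanishing} argument, not a factorization: when $\beta_1,\beta_2\neq 0$, the projection $\mM_{k+1}(\beta_1\times\beta_2)\to \mM_{p}(\beta_1)\times\mM_{q}(\beta_2)$ lands in a space of strictly smaller virtual dimension, so the pull-push gives zero (Proposition~\ref{kurvanishingpi*}). The only surviving terms are those with $\beta_1=0$ or $\beta_2=0$, where $\mM_{k+1}(\beta_1\times 0)\cong \mM_{k+1}(\beta_1)\times L_2$ identifies the operation with one from a single factor.

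The gap in your proposal is the sentence ``integration along the fiber over $K_{k+1}$ factorizes, expressing $\m_k$ \ldots\ as a sum, indexed by the facet stratification of $K_{k+1}$ \ldots\ of products of operations $\m_{k_1}$ and $\m_{k_2}$.'' This is not a Fubini-type identity: the forgetful maps $\mM_{k+1}(\beta_i)\to K_{k+1}$ are not fibrations with product fibers, and fiber-integrating over the fiber product does not produce a sum of products of lower $\m$'s indexed by faces of the associahedron. Making something like this precise would amount to constructing a cellular diagonal on the associahedra compatible with the Kuranishi perturbations---essentially reproving the existence of $\otimes_\infty$ geometrically---and you have not indicated how to do this. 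Likewise, the passage ``the perturbed operations \ldots\ assemble into an explicit $A_\infty$-morphism $\phi$'' hides exactly the construction that the abstract criterion is designed to avoid. The paper's route sidesteps both issues: it never needs to know the explicit formula for $\otimes_\infty$, only the algebraic characterization via commuting subalgebras, and the geometry reduces to a clean dimension count.
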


 Before going into the proof let us explore some immediate applications of this theorem. First recall that bounding cochains are solutions of the Maurer--Cartan equation,
$$\sum_{k\geq 0}\m_k(b,\ldots, b)=\mP(b) e_L,$$
where $e_L$ is the unit of $\mF(L)$ and $\mP(b)$ is some element in the Novikov ring $\Lambda_0$ (see Section \ref{ainfinity} for the definition). We denote by $MC(L)$ the set of solutions to this equation, modulo gauge equivalence (see \cite[Section 4.3]{FOOO}).

\begin{cor}
 There is a map
 $$\boxtimes: MC(L_1) \times MC(L_2) \lto MC(L_1 \times L_2),$$
 which satisfies $\mP(b_1 \boxtimes b_2) = \mP(b_1)+\mP(b_2)$. Moreover, when $L_1$ and $L_2$ are connected and graded, that is, when their Maslov classes vanish, this map is a bijection.
\end{cor}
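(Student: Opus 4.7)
The plan is to use the quasi-isomorphism of Theorem \ref{intmain} to transport the problem to the tensor product $\mF(L_1) \otimes_\infty \mF(L_2)$, where the map $\boxtimes$ can be built directly from the tensor construction of \cite{Amo2}. Since a quasi-isomorphism of filtered \Ai-algebras induces a bijection on Maurer--Cartan moduli (see e.g.\ \cite[Chapter 4]{FOOO}), it suffices to produce a map
$$\boxtimes_\otimes \colon MC(\mF(L_1)) \times MC(\mF(L_2)) \lto MC(\mF(L_1) \otimes_\infty \mF(L_2))$$
with the additivity property for the potential, and then push it forward through the quasi-isomorphism.

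Given $b_i \in MC(\mF(L_i))$, the natural candidate is
$$b_1 \boxtimes_\otimes b_2 \;:=\; b_1 \otimes e_{L_2} + e_{L_1} \otimes b_2,$$
where $e_{L_i}$ is the strict unit of $\mF(L_i)$. That this solves the Maurer--Cartan equation and satisfies $\mP(b_1 \boxtimes_\otimes b_2) = \mP(b_1) + \mP(b_2)$ should follow directly from the definition of $\otimes_\infty$ in \cite{Amo2}: the tensor \Ai-operations are designed so that inserting a strict unit in one tensor factor collapses them to the operations of the other factor, which separates the Maurer--Cartan equation into two independent equations for $b_1$ and $b_2$ and splits the potential additively. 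The same separation, applied to gauge pseudo-isotopies, shows that $\boxtimes_\otimes$ descends to gauge-equivalence classes.

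For the bijection in the connected, graded case, both surjectivity and injectivity become manageable thanks to a sharp degree constraint. Since $L_i$ is connected and graded, $H^0(L_i;\Lambda_0) = \Lambda_0 \cdot e_{L_i}$, while bounding cochains are homogeneous of degree one. A degree-one element of $\mF(L_1) \otimes_\infty \mF(L_2)$ can only carry bidegrees $(1,0)$ and $(0,1)$, because a mixed term $x \otimes y$ with $|x|, |y| \geq 1$ has total degree at least two; combined with $H^0(L_i) = \Lambda_0 \cdot e_{L_i}$, this forces every degree-one element to be exactly of the form $b_1 \otimes e_{L_2} + e_{L_1} \otimes b_2$. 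Surjectivity of $\boxtimes_\otimes$ on Maurer--Cartan elements is therefore immediate once one has the separation-of-variables argument from the second paragraph, which also shows that each $b_i$ is itself Maurer--Cartan.

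The main obstacle is the gauge-level statement: one must show that any gauge pseudo-isotopy in $\mF(L_1) \otimes_\infty \mF(L_2)$ connecting $b_1 \boxtimes_\otimes b_2$ and $b_1' \boxtimes_\otimes b_2'$ can be chosen of product form, so that it restricts to a gauge equivalence of $b_i$ with $b_i'$ in each factor; this gives injectivity on gauge classes and, composed with the surjectivity above, also shows that the factors of a representative are unique up to gauge. The expected approach is to run the same degree argument on the Maurer--Cartan equation of the \Ai-pseudo-isotopy algebra $\mF(L_1)[t,dt] \otimes_\infty \mF(L_2)[t,dt]$ and its variants from \cite{Amo2}: the $t$- and $dt$-components of the pseudo-isotopy sit in specific bidegrees, and the connectedness/grading hypothesis should again rule out mixed tensor components, leaving only the product pieces. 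The delicate point will be to set up the tensor-product pseudo-isotopy formalism so that this clean bidegree decomposition survives composition with the quasi-isomorphism of Theorem \ref{intmain}.
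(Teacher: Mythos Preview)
The paper's own proof is a two-line citation: both the existence/additivity of $\boxtimes$ and the bijection in the graded connected case are immediate from Theorem~\ref{intmain} together with Theorem~1.3 of \cite{Amo2}. So what you are trying to supply is precisely the content of that companion paper; your proposal is not an alternative route but an attempted reconstruction of the argument in \cite{Amo2}. The first half of your sketch (construction of $\boxtimes_\otimes$ and additivity of $\mP$) is the right idea and, modulo the hedging, matches what one expects \cite{Amo2} to do.

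There is, however, a genuine gap in your surjectivity argument for the bijection. You invoke $H^0(L_i;\Lambda_0)=\Lambda_0\cdot e_{L_i}$ to conclude that every degree-one element of $\mF(L_1)\otimes_\infty\mF(L_2)$ is \emph{exactly} of the form $b_1\otimes e_{L_2}+e_{L_1}\otimes b_2$. But $\mF(L_i)=\Omega^*(L_i)$, so a degree-one tensor lies in $\Omega^0(L_1)\otimes\Omega^1(L_2)\ \oplus\ \Omega^1(L_1)\otimes\Omega^0(L_2)$, and $\Omega^0(L_i)$ is the space of all smooth functions, not just multiples of $e_{L_i}$. The connectedness hypothesis is a statement about $H^0$, not about the chain-level degree-zero part, so it does not force the product form on the nose. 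To make the degree count work you must either first pass to a minimal (canonical) model on $H^*(L_i;\Lambda_0)$, where degree zero genuinely is one-dimensional---this is presumably how ``connected'' is formulated in \cite{Amo2}, and it requires checking that the quasi-isomorphism of Theorem~\ref{intmain} is compatible with that transfer---or else prove directly, by an induction on the energy filtration, that every degree-one Maurer--Cartan element is \emph{gauge-equivalent} to one of product form. Neither step is the immediate degree bookkeeping you present, and your injectivity discussion inherits the same issue.
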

\begin{proof}
The first statement follows immediately from Theorem \ref{intmain} and Theorem 1.3 in \cite{Amo2}. For the second statement note that $L_i$ being connected and graded implies that $\mF(L_i)$ is a graded and connected \Ai-algebra (in the sense of Definition 6.7 in \cit{Amo2}). So again, the statement follows from Theorem 1.3 in \cite{Amo2}.
\end{proof}

Given a bounding cochain $b$ we can deform the $A_\infty$-algebra $\mF(L)$ by setting
$$\m_k^b(a_1,\ldots,a_k)=\sum_{i_0,\ldots,i_k}\m_{k+i_0+\ldots+i_k}(b,\ldots,b,a_1,b,\ldots,b,a_k,b,\ldots,b).$$
This defines an \Ai-algebra structure $\mF(L,b)$ on $\Omega^*(L) \hat \otimes \Lambda$, here $\Lambda$ is the Novikov field and $\hat\otimes$ stands for the completed tensor product. It follows from the Maurer--Cartan equation that $\m_1^b$ is a differential. Thus we define the self Floer cohomology $HF(L,b)$ to be the cohomology of $\mF(L,b)$ with respect to the differential $\m_1^b$.

The following corollary is another direct consequence of Theorem 1.3 in \cite{Amo2}
\begin{cor}
 Consider bounding cochains $b_1 \in MC(L_1)$, $b_2 \in MC(L_2)$. Then
 $$\mF(L_1 \times L_2, b_1 \boxtimes b_2) \simeq \mF(L_1, b_1) \otimes_\infty \mF(L_2, b_2).$$
 In particular $$HF(L_1\times L_2, b_1 \boxtimes b_2) \cong HF(L_1,b_1)\otimes_{\Lambda} HF(L_2,b_2).$$
\end{cor}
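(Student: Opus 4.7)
My plan is to obtain both statements as immediate consequences of Theorem \ref{intmain} together with Theorem 1.3 of \cite{Amo2}, using the latter to transport bounding cochains along a quasi-isomorphism and to relate the deformation of a tensor product to the tensor product of deformations.

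For the $A_\infty$ statement, I would fix the filtered \Ai-quasi-isomorphism $f : \mF(L_1 \times L_2) \to \mF(L_1) \otimes_\infty \mF(L_2)$ from Theorem \ref{intmain}. By construction of $\boxtimes$ in the previous corollary, $f_*(b_1 \boxtimes b_2)$ is gauge equivalent to the canonical tensor product bounding cochain on $\mF(L_1) \otimes_\infty \mF(L_2)$ produced from $b_1$ and $b_2$ by Theorem 1.3 of \cite{Amo2}. The standard fact that a filtered \Ai-quasi-isomorphism descends to a quasi-isomorphism between the \Ai-algebras obtained by twisting by corresponding bounding cochains (cf.\ \cite[Section 4.3]{FOOO}) then gives
$$\mF(L_1 \times L_2,\, b_1 \boxtimes b_2) \simeq \bigl(\mF(L_1) \otimes_\infty \mF(L_2)\bigr)_{b_1 \boxtimes b_2}.$$
The compatibility statement of Theorem 1.3 of \cite{Amo2} identifies the right hand side with $\mF(L_1, b_1) \otimes_\infty \mF(L_2, b_2)$, proving the first quasi-isomorphism.

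To deduce the cohomological statement I would pass to $\m_1$-cohomology on the right hand side. The underlying cochain complex of $\mF(L_1, b_1) \otimes_\infty \mF(L_2, b_2)$ is the completed tensor product of the complexes $(\mF(L_i, b_i), \m_1^{b_i})$, since the $\otimes_\infty$ construction of \cite{Amo2} leaves $\m_1$ equal to the tensor differential and only modifies the higher operations $\m_k$ for $k \geq 2$. Since $\Lambda$ is a field, the ordinary Künneth theorem applies and yields
$$HF(L_1 \times L_2,\, b_1 \boxtimes b_2) \cong HF(L_1, b_1) \otimes_\Lambda HF(L_2, b_2),$$
which is the second claim.

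The main obstacle, if any, is entirely packaged inside Theorem 1.3 of \cite{Amo2}; granted that result, the present corollary is essentially formal. The one delicate point to verify is that the operation $\boxtimes$ of the previous corollary really does intertwine, under $f_*$, with the tensor construction of bounding cochains in \cite{Amo2} at the level of gauge equivalence classes. This, however, is precisely how $\boxtimes$ was defined in the proof of the previous corollary, so nothing beyond unwinding the definitions is required.
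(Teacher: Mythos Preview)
Your proposal is correct and takes essentially the same approach as the paper: the paper simply states that the corollary is ``another direct consequence of Theorem 1.3 in \cite{Amo2}'' (together with Theorem \ref{intmain}), and what you have written is a faithful unpacking of that sentence. The only difference is that you spell out the intermediate steps---transporting bounding cochains along the quasi-isomorphism, identifying the deformed tensor product, and invoking K\"unneth over the field $\Lambda$---which the paper leaves implicit.
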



Now we turn to the proof of Theorem \ref{intmain}. The main tool we will use is the following theorem proved in \cite{Amo2}.

\begin{thm}\label{intcriterion}
Let $(A,\m^A)$ and $(B,\m^B)$ be commuting \Ai-subalgebras of $(C,\mu)$ in the sense of Definitions {\rm\ref{subalgebra}} and {\rm\ref{comsubalg}}.
If the map $K:A\otimes B\lto C$ defined as $K(a\otimes b)=(-1)^{\vert a\vert}\mu_{2,0}(a,b)$ is an injective map which induces an isomorphism on $\mu_{1,0}$-cohomology then there is a (strict) quasi-isomorphism
$$A\otimes_\infty B \simeq C.$$ 
\end{thm}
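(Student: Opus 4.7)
The plan is to promote the injective linear map $K$ to a \emph{strict} \Ai-morphism $A\otimes_\infty B \to C$ (i.e.\ one whose only nonzero component is $F_1=K$) and then observe that the cohomology hypothesis immediately makes it a quasi-isomorphism. The key technical task is therefore to show that the $\otimes_\infty$-operations on $A\otimes B$ are carried by $K$ into the $\mu$-operations on $C$.

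First I would exploit the injectivity of $K$ to \emph{pull back} the operations of $C$ to $A\otimes B$. For each $k$, one would like to define operations $\nu_k$ via
$$K\bigl(\nu_k(x_1,\ldots,x_k)\bigr)=\mu_k\bigl(K(x_1),\ldots,K(x_k)\bigr),$$
which is well-defined as soon as the right-hand side lies in the image of $K$. The commuting subalgebra hypothesis (Definition \ref{comsubalg}) should say precisely this: that $\mu_k$ evaluated on products $K(a_i\otimes b_i)=(-1)^{|a_i|}\mu_{2,0}(a_i,b_i)$ can be rewritten, via the interaction rules between the $A$- and $B$-subalgebras inside $C$, as another product $\mu_{2,0}(\cdot,\cdot)$ of terms built from $\m^A$ and $\m^B$. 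Since $\mu$ satisfies the \Ai-relations in $C$ and $K$ is injective, the $\nu_k$ automatically satisfy the \Ai-relations on $A\otimes B$.

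Next I would identify these pulled-back operations with the tensor product operations $\m_k^{A\otimes_\infty B}$ defined in \cite{Amo2}. At the linear level this is immediate: $\nu_1=\m_1^A\otimes\mathrm{id}\pm\mathrm{id}\otimes\m_1^B$ follows from applying the commuting subalgebra conditions to $\mu_{1,0}\circ\mu_{2,0}$, which is just the graded Leibniz rule for the classical product (the sign $(-1)^{|a|}$ in the definition of $K$ is tuned so that this works with the correct Koszul signs). At higher orders, the $\otimes_\infty$-operations are assembled from $\m^A$ and $\m^B$ through a combinatorial recipe involving shuffles or two-coloured stable trees, and one expects the expansion of $\mu_k$ on products $K(a_i\otimes b_i)$ to reproduce exactly this recipe. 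Once $\nu_k=\m_k^{A\otimes_\infty B}$, the map $K$ is by construction a strict \Ai-morphism, and quasi-isomorphism follows from the hypothesis together with the standard Künneth identification of the $\m_1^{A\otimes_\infty B}$-cohomology of $A\otimes B$ with $H^*(A,\m_1^A)\otimes H^*(B,\m_1^B)$.

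The main obstacle I anticipate is this second step: matching the pulled-back operations $\nu_k$ with $\m_k^{A\otimes_\infty B}$ in all degrees. The tensor product \Ai-structure is combinatorially involved, and the matching requires careful sign bookkeeping together with the commuting subalgebra conditions being strong enough at all orders of $\mu$. If Definition \ref{comsubalg} only constrains the classical operations $\mu_{k,0}$ and the lowest-order interaction between $A$ and $B$, then higher-order closure of $\image K$ under $\mu_k$ would have to be deduced inductively, presumably by leveraging the Novikov filtration and the fact that $A$ and $B$ are themselves filtered \Ai-subalgebras. The injectivity of $K$ is indispensable throughout, as it is what uniquely determines $\nu_k$ from its image under $K$ and thereby makes the identification with $\otimes_\infty$ a meaningful, checkable equality.
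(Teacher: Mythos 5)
You should first be aware that this paper contains no proof of Theorem \ref{intcriterion}: it is imported from \cite{Amo2}, so the only material here to measure your argument against is Definitions \ref{subalgebra} and \ref{comsubalg}. Measured against those, your central step fails. You propose to define operations $\nu_k$ on $A\otimes B$ by $K(\nu_k(x_1,\ldots,x_k))=\mu_k(K(x_1),\ldots,K(x_k))$ and assert that the commuting-subalgebra hypothesis says ``precisely'' that the right-hand side lies in the image of $K$. It does not. Definition \ref{comsubalg} constrains $\mu_{k,\beta}$ only on inputs each of which lies in $A$ or in $B$ (condition (a)), and on inputs containing at most \emph{one} element of the form $c=K(a\otimes b)$, with all the remaining entries drawn from $A$ alone (and $\beta\in G_A$) or from $B$ alone (and $\beta\in G_B$) (condition (c)). It says nothing about expressions such as $\mu_{2,0}\bigl(K(a_1\otimes b_1),K(a_2\otimes b_2)\bigr)$, and since $K$ is only assumed injective and a quasi-isomorphism, not surjective, there is no reason for such expressions to lie in $\image K$. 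So $\image K$ need not be closed under the operations of $C$, the pulled-back $\nu_k$ do not exist in general, and the strict morphism you want to build from them collapses at the first step.

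A second, related problem is your reading of ``strict''. In the filtered setting of \cite{FOOO} and \cite{Amo2}, a filtered \Ai-homomorphism has components $f_k$ for $k\geq 0$ and ``strict'' means $f_0=0$, not that every component except $f_1$ vanishes. The theorem therefore does not assert that $K$ itself intertwines the operations --- which, as above, would force $\image K$ to be closed under all $\mu_{k,\beta}$ and is false in general --- but that there is a quasi-isomorphism with possibly nontrivial higher components whose linear part is $K$. Correspondingly, the filtered tensor product $\otimes_\infty$ in \cite{Amo2} is not given by a closed shuffle or tree formula that one could match term by term; it is produced by an inductive construction over the energy filtration, and the quasi-isomorphism of Theorem \ref{intcriterion} is built by a similar induction using conditions (a)--(c) and gappedness at each energy level. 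The step you defer as ``the main obstacle'' --- identifying the transported structure with $\otimes_\infty$ in all degrees --- is exactly the mathematical content of that argument, so what you have is a plausible opening move resting on an unjustified closure property, not a proof.
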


The proof of the next theorem occupies most of this paper. Combined with Theorem \ref{intcriterion} it easily implies Theorem \ref{intmain}. 

\begin{thm}\label{intcomsub}
The Fukaya algebras $\mF(L_1)$ and $\mF(L_2)$ are commuting \Ai-subalgebras of $\mF(L_1\times L_2)$ via the inclusions
  \begin{align}
  \begin{array}{lll}
\mF(L_1) \lto \mF(L_1\times L_2), & & \mF(L_2) \lto \mF(L_1\times L_2)\\
\xi_1 \longmapsto (-1)^{|\xi_1|n_2} p_1^*(\xi_1)                            & &  \xi_2 \longmapsto (-1)^{|\xi_2|n_1} p_2^*(\xi_2),\\
\end{array}\nonumber
 \end{align}
where $p_i: L_1 \times L_2 \lto L_i$ is the projection and $\xi_i \in \Omega^*(L_i)$.
 \end{thm}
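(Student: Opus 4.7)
The strategy is to choose the product almost complex structure $J=J_1\oplus J_2$ on $M_1\times M_2$, so that every $J$-holomorphic disk with boundary on $L_1\times L_2$ is exactly a pair of $J_i$-holomorphic disks in the two factors. This yields a decomposition of the moduli spaces as fibre products
\[
\mathcal{M}_{k+1}(L_1\times L_2,\beta) = \bigsqcup_{\beta_1+\beta_2=\beta}\mathcal{M}_{k+1}(L_1,\beta_1)\times_{\mathcal{D}_{k+1}}\mathcal{M}_{k+1}(L_2,\beta_2)
\]
over the moduli $\mathcal{D}_{k+1}$ of marked disks, with boundary evaluations factoring as $ev_j^{L_1\times L_2}=ev_j^{L_1}\times ev_j^{L_2}$. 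The first task is to upgrade this decomposition to the continuous families of multisections used in Fukaya's de Rham construction \cite{Fuk}, so that the $A_\infty$-operations defining $\mF(L_1\times L_2)$ actually decompose on each fibre of $\mathcal{M}_{k+1}\to\mathcal{D}_{k+1}$. I would construct such compatible perturbations inductively on energy, starting from the chosen continuous families for $\mF(L_1)$ and $\mF(L_2)$ and pairing them on the fibre product.

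Once this compatibility is in place, the subalgebra condition (Definition~\ref{subalgebra}) for $i_1(\xi)=(-1)^{|\xi|n_2}p_1^*\xi$ follows from a Fubini calculation on the fibre product. Since $(ev_j^{L_1\times L_2})^*p_1^*\xi^j=(ev_j^{L_1})^*\xi^j$, the integrand in $\m_k^{L_1\times L_2}(i_1\xi^1,\ldots,i_1\xi^k)$ is pulled back via the projection to the $L_1$-moduli. Integrating over the $D$-fibre $\mathcal{M}_{k+1,D}(L_1,\beta_1)\times\mathcal{M}_{k+1,D}(L_2,\beta_2)$, the pushforward along $ev_0^{L_1}\times ev_0^{L_2}$ splits into the product of the $L_1$-pushforward, which (after further integration over $\mathcal{D}_{k+1}$) gives $p_1^*\m_{k,\beta_1}^{L_1}(\xi^1,\ldots,\xi^k)$, and the pushforward of the constant $1$ along $ev_0^{L_2}$ on the $L_2$-moduli. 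Summed over $\beta_2$, the $L_2$-contribution assembles into $\m_k^{L_2}(e_{L_2},\ldots,e_{L_2})$, which by strict unitality of $\mF(L_2)$ equals $e_{L_2}$ for $k=2$ and vanishes otherwise, leaving precisely $p_1^*\m_k^{L_1}(\xi^1,\ldots,\xi^k)$. Tracking the sign twists $(-1)^{|\xi|n_2}$ then yields the identity $\m_k^{L_1\times L_2}(i_1\xi^1,\ldots,i_1\xi^k)=i_1\m_k^{L_1}(\xi^1,\ldots,\xi^k)$; the verification for $i_2$ is symmetric.

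For the commuting condition of Definition~\ref{comsubalg}, the same fibre-product / Fubini analysis is applied to mixed inputs from $p_1^*\Omega^*(L_1)$ and $p_2^*\Omega^*(L_2)$: an input $i_1\xi$ at marked point $j$ contributes $(ev_j^{L_1})^*\xi$ on the $L_1$-side and the trivial factor $1=e_{L_2}$ on the $L_2$-side, and symmetrically for $i_2\eta$. Each mixed operation $\m_k^{L_1\times L_2}$ therefore splits as a product of an $L_1$-operation with $e_{L_1}$-insertions at the positions of the $p_2^*$-inputs and an $L_2$-operation with $e_{L_2}$-insertions at the positions of the $p_1^*$-inputs, which is exactly the structure required by Definition~\ref{comsubalg}.

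The principal technical obstacle is the Kuranishi / multisection step: the product almost complex structure is not regular on $\mathcal{M}_{k+1}(L_1\times L_2,\beta)$, so the fibre-product decomposition must be realised at the level of perturbations, coherently with those chosen for $\mF(L_1)$ and $\mF(L_2)$ and with the boundary strata under gluing. Constructing these compatible continuous families inductively on energy, together with the careful sign bookkeeping arising from orientations of the moduli and from the twists $(-1)^{|\xi|n_{3-i}}$ in the inclusions, is the analytic core of the proof.
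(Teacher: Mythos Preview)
Your approach has a real gap in two places. First, the fibre-product decomposition $\mathcal{M}_{k+1}(\beta_1\times\beta_2)\cong\mathcal{M}_{k+1}(\beta_1)\times_{\mathcal{D}_{k+1}}\mathcal{M}_{k+1}(\beta_2)$ fails on boundary strata: a nodal domain $\Sigma$ for $(u_1,u_2)$ is not determined by the separately stabilised domains of $u_1$ and $u_2$ together with their common image in $\mathcal{D}_{k+1}$. For instance, a bubble carrying only $u_1$-energy and one carrying only $u_2$-energy can be attached in inequivalent tree configurations yet have identical projections $\Pi_1,\Pi_2$. Second, and more seriously, even on the open stratum where your description is correct, the Fubini step does not go through: on a \emph{fibre} product the pushforward along $ev_0^{L_1}\times ev_0^{L_2}$ does not split as a product of pushforwards. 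Integrating out the $L_2$-direction produces a form on $\mathcal{M}_{k+1}(\beta_1)\times L_2$ that genuinely depends on the underlying domain (i.e.\ on the point of $\mathcal{D}_{k+1}$), so the ``$L_2$-contribution'' is not $\m_{k,\beta_2}^{L_2}(e_{L_2},\ldots,e_{L_2})$ and the appeal to strict unitality is unjustified.

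The paper's mechanism is different and sidesteps both issues. Rather than a fibre product over $\mathcal{D}_{k+1}$, it uses projection-and-forget maps
\[
P_{J,L}:\ \mathcal{M}_{k+1}(\beta_1\times\beta_2)\longrightarrow\mathcal{M}_{k-|L|+1}(\beta_1)\times\mathcal{M}_{k-|J|+1}(\beta_2),
\]
where on the $\beta_1$-side one forgets the marked points carrying $L_2$-inputs and vice versa. The key observation is a \emph{dimension count}: whenever $\beta_1,\beta_2\neq0$ the target has virtual dimension two less than the source, and since both the integrand and $Ev_0$ factor through $P_{J,L}$, the correspondence vanishes by the elementary lemma that fibre integration of a pulled-back form through a strictly lower-dimensional space is zero (Proposition~\ref{kurvanishingpi*}). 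The surviving cases, such as $\beta_2=0$ with all inputs from $L_1$, reduce to the honest product identification $\mathcal{M}_{k+1}(\beta_1\times0)\cong(-1)^{n_2k}\mathcal{M}_{k+1}(\beta_1)\times L_2$ of Proposition~\ref{b2=0}, where a genuine Fubini computation does yield the subalgebra identity. The analytic core is therefore to build Kuranishi structures and multisections compatible with the maps $P_{J,L}$ (not with a fibre product over $\mathcal{D}_{k+1}$); this is carried out in Section~\ref{proofs} by first constructing them on $\mathcal{M}_1(\beta_1)\times\mathcal{M}_1(\beta_2)$ and pulling back along $\Pi_1\times\Pi_2$ and the boundary-forgetful maps.
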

 
The quasi-isomorphism class of the Fukaya algebra $\mF(L)$ is an invariant of the Lagrangian submanifold, but its construction depends on the choice of a compatible almost complex structure and choices of perturbations of the moduli spaces of stable disks with boundary in $L$. The theorem above should be interpreted as saying that after fixing almost complex structures and perturbations of the relevant moduli spaces for $L_1$ and $L_2$, if we take the product almost complex structure on $M_1\times M_2$, there are specific choices of perturbations of the moduli spaces of disks with boundary in $L_1\times L_2$ so that $\mF(L_1)$ and $\mF(L_2)$ are commuting subalgebras of $\mF(L_1\times L_2)$.
 
\vspace{.1cm}

\begin{proof}[Proof of Theorem \ref{intmain}]In view of Theorems \ref{intcriterion} and \ref{intcomsub} we need only to check that $K$ is an injective map which induces an isomorphism in $\mu_{1,0}$-cohomology. Recall (or see Section 2) that $\m_{1,0}^{L}=(-1)^{\dim L +1}d$ and $\mu_{2,0}(a, b)= (-1)^{|a|}a \wedge b$. Then, by definition
$$K(\xi_1 \otimes \xi_2)=(-1)^{|\xi_1|(1+n_2)+|\xi_2|n_1} p_1^*(\xi_1) \wedge p_2^*(\xi_2).$$
Thus, up to a change in sign, this is simply the usual K\"unneth map. This is clearly injective and the standard K\"unneth Theorem on de Rham cohomology implies the claim.
\end{proof}

We finish the introduction with an illustration of the ideas involved in the proof of Theorem \ref{intcomsub}. We sketch the proof of one of the identities that are part of the definition of commuting \Ai-subalgebras, namely
$$\m_{1,\beta}(\xi)=0, \ \beta=\beta_1\times \beta_2 \in \pi_2(M_1\times M_2, L_1 \times L_2), \text{when} \ \beta_1,\beta_2 \neq 0.$$
We consider the moduli space of stable disks with boundary in $L_1\times L_2$ and two boundary marked $\mM_{2}(\beta)$. Evaluation at the two marked points gives maps $ev_0, ev_1 : \mM_{2}(\beta) \lto L_1\times L_2$. Roughly speaking, that is pretending that $\mM_{2}(\beta)$ is a smooth manifold and that $ev_0$ is a submersion, we define 
$$\m_{1,\beta}(\xi)= (ev_0)_*(ev_1^*(\xi)),$$
where $(ev_0)_*$ is fiber integration.
In our situation, there is a map 
$$\pi: \mM_{2}(\beta) \lto \mM_{2}(\beta_1)\times \mM_{2}(\beta_2),$$ 
which sends a stable map $u$ to its components $(p_1 \circ u, p_2\circ u)$ and stabilizes the domains if necessary. Observe that the evaluation maps factor through $\pi$ and a simple computation shows that the target of $\pi$ has smaller dimension than the domain. Then the claim follows from the general fact about fiber integration: given maps $f,g,h$ such that $h=g\circ f$, with $f: M \lto N$ and $\dim N < \dim M$ we have $h_*(f^* (\xi))=0$. 

To actually prove the claim we have to carry out a similar argument in the context of spaces with Kuranishi structures. We would like to point out that a similar argument is used by Fukaya in \cite{Fuk} to show that $\mF(L)$ is a strictly unital \Ai-algebra.

This paper is organized in the following way. In Section \ref{ainfinity}, we give some background on filtered \Ai-algebras. In Section \ref{secfuk}, we review the construction of the Fukaya algebra following \cite{Fuk}, but describing several sign conventions that were not explicit. In Section \ref{secfukprod} we prove the modulo $T^E$ version of Theorem \ref{intcomsub}, assuming the existence of some particular Kuranishi structures on the moduli spaces of disks and in \S \ref{proofs} we construct these Kuranishi structures. In Section \ref{secfukprodmodte}, we upgrade the result of \S \ref{secfukprod} from \Ai-algebras modulo $T^E$ to full-fledged \Ai-algebras, thus completing the proof of Theorem \ref{intcomsub}. We finish with an appendix where we define fiber integration and smooth correspondences and prove several useful properties these satisfy. 
\vspace{.3cm}

\noindent{\bf Acknowledgements:} This paper is a reinterpretation of some of the results in my Ph.D. thesis. I would like to thank my advisor Yong-Geun Oh for his continued help and support. I would also like to thank Kenji Fukaya, Dominic Joyce, Hiroshi Ohta and Kaoru Ono for helpful conversations.  I would also like to thank an anonymous referee for pointing out a mistake in a previous version of the paper.
During my Ph.D. I was partially supported by FCT through the scholarship  SFRH/ BD/30381/2006. During the preparation of this paper I was supported by EPSRC grant EP/J016950/1.

\vspace{.3cm}

\noindent{\bf Conventions:} Given an element $a$ in a graded vector space $A$, we will denote its degree by $\vert a \vert$. We will also use a shifted degree, $\vert\vert a \vert\vert = \vert a \vert -1$.

Let $M$ and $N$ be smooth manifolds and let $p_1:M\times N \lto M$ and $p_2:M\times N \lto N$ be the natural projections. Given differential forms $\xi_1 \in \Omega^*(M)$ and $\xi_2 \in \Omega^*(N)$, we denote by $\xi_1 \times \xi_2$ the differential form $p_1^*(\xi_1)\wedge p_2^*(\xi_2) \in \Omega^*(M\times N)$.

\section{Filtered $A_\infty$-algebras}\label{ainfinity}

In this section we briefly review some basic notions of the theory of \Ai-algebras. For more complete treatments we refer the reader to \cite{FOOO} for the case of filtered \Ai-algebras and to \cite{Seisub} for the classical case. We will also review the notions of commuting \Ai-subalgebras introduced in \cite{Amo2}.

\begin{defn}
 An \Ai-algebra over a ring $R$ consists of a $\ZZ_2$-graded $R$-module $A$ and a collection of multilinear maps $\m_{k}:A^{\otimes k} \lto A$ for each $k \geq 0$ of degree $k \pmod 2$ satisfying the following equation
\begin{align} \label{Ainf}
 \sum_{\substack{0\leq j\leq n\\1\leq i\leq n-j+1}}(-1)^{*}\m_{n-j+1}(a_1,\ldots,\m_{j}(a_{i},\ldots,a_{i+j-1}),\ldots,a_n)=0
\end{align}
where $*=\sum_{l=1}^{i-1} \vert\vert a_l \vert\vert$.
\end{defn}

We are interested in a particular kind of \Ai-algebra defined over the Novikov ring 
$$\Lambda_0=\Big\{\sum_{i=0}^\infty a_iT^{\lambda_i}\vert \lambda_i, a_i\in \mR, 0\leq\ldots\leq \lambda_i\leq\lambda_{i+1}\leq\ldots,\lim_{\lambda_i\to\infty}= +\infty\Big\}.$$
Note that $\Lambda_0$ has a natural filtration 
$$F^E\Lambda_0=\Big\{\sum_i a_iT^{\lambda_i}\vert \lambda_i\geq E,\forall\ i\textrm{ with } a_i\neq0 \Big\}.$$
Moreover $\Lambda_0$ is local and localizing at the maximal ideal we obtain the Novikov field
$$\Lambda=\Big\{\sum_{i=0}^\infty a_iT^{\lambda_i}\vert \lambda_i, a_i\in\mR, \lambda_i\leq\lambda_{i+1},\lim_{\lambda_i\to\infty}= +\infty\Big\}.$$

Next consider $G\subset \mathbb{R}_{\geq 0}\times2\mathbb{Z}$ and and denote by $E:G\lto \mathbb{R}_{\geq 0}$ and $\mu:G\lto 2\mathbb{Z}$ the natural projections. We say $G$ is a {\it discrete submonoid}, if it is an additive submonoid satisfying
$$E^{-1}([0,c])\textrm{ is finite for any }c\geq 0.$$

\begin{defn}
 Let $G$ be a discrete submonoid, a $G$-gapped filtered \Ai-algebra $A=(A, \m)$ consists of a $\ZZ$-graded real vector space $A$ together with maps $\m_{k,\beta}:A^{\otimes k} \lto A$, for each $\beta \in G$ and $k\geq0$ of degree $2-k-\mu(\beta)$. These are required to satisfy $\m_{0,0}=0$ and for all $\beta \in G$ and homogeneous $a_1,\ldots,a_n \in A$:
\begin{align}\label{gapAinf}
\sum_{\substack{\beta_1+\beta_2=\beta\\0\leq j\leq n\\1\leq i\leq n-j+1}}(-1)^{*}\m_{n-j+1,\beta_2}(a_1,\ldots,\m_{j,\beta_1}(a_{i},\ldots,a_{i+j-1}),\ldots,a_n)=0.
\end{align}

Fix $E>0$, if the maps $\m_{k,\beta}$ only exist for $\beta$ such that $E(\beta)\leq E$ and the above condition is satisfied for all such $\beta$ we say $(A,\m)$ is an \Ai-algebra modulo $T^E$.
\end{defn}

We say $A$ is a filtered \Ai-algebra if it is a $G$-gapped filtered \Ai-algebra for some discrete submonoid $G$.

The reason for the name \Ai-algebra modulo $T^E$ is as follows. Given a filtered \Ai-algebra $(A,\m)$, let $\hat{A}_0= A\hat\otimes\Lambda_0$ be the completion of $A\otimes_{\mR}\Lambda_0$ with respect to filtration induced by filtration in $\Lambda_0$. Then define maps $\m_k:\hat{A}_0^{\otimes k} \lto \hat{A}_0$ by setting
$$  \m_k=\sum_{\beta\in G} \m_{k,\beta}T^{E(\beta)}.$$
The gapped condition ensures this well defined and (\ref{gapAinf}) implies that $(\hat{A}_0, \m)$ is an \Ai-algebra over $\Lambda_0$. Similarly, given an \Ai-algebra modulo $T^E$ we can construct an \Ai-algebra $\hat{A}_0= A\hat\otimes(\Lambda_0/F^E\Lambda_0)$.

\begin{defn}
Let $(A,\m)$ be a filtered \Ai-algebra. If $G \subset \mR_{\geq 0} \times \{0\}$, we say $A$ is graded.
 
The \Ai-algebras is said to be unital if there is an element $e_A\in A$ of degree $0$ (called the unit) satisfying
$$\m_{2,0}(e_A,a)=(-1)^{\vert a\vert}\m_{2,0}(a,e_A)=a$$ and $\m_{k,\beta}(\ldots,e,\ldots)=0$ for $(k,\beta)\neq (2,0)$.
\end{defn}

We end this section by recalling the definitions of subalgebra and commuting subalgebras from \cite{Amo2}. We also give the modulo $T^E$ version of these definitions.

\begin{defn}\label{subalgebra}
Let $(A,\m^A)$ and $(C,\mu)$ be (respectively) $G_A$ and $G$-gapped filtered \Ai-algebras, for discrete submonoids $G_A\subseteq G$. We say A is a subalgebra of $C$ if $A\subseteq C$, $e_A=e_C$ and for all $k>0$ and $a_1, \ldots a_k \in A$ we have
\begin{align}
 \mu_{k,\beta}(a_1,\ldots,a_k)&=\m_{k,\beta}^A(a_1,\ldots,a_k), \ \  \beta\in G_A,\nonumber \\
 \mu_{k,\beta}(a_1,\ldots,a_k)&=0, \  \ \beta\in G\setminus G_A.\nonumber
\end{align}

If $(A,\m^A)$ and $(C,\mu)$ are \Ai-algebras modulo $T^E$, we say $A$ is a subalgebra modulo $T^E$ of $C$ if the above conditions hold for all $\beta$ satisfying $E(\beta)\leq E$.
\end{defn} 

\begin{defn}\label{comsubalg}
Let $(A,\m^A)$ and $(B,\m^B)$ be $G_A$ and $G_B$-gapped filtered \Ai-algebras. Suppose $A$ and $B$ are subalgebras of $(C,\mu)$ a $G$-gapped \Ai-algebra with $G=G_A+G_B$. Denote by $K:A\otimes B\lto C$ the map defined as $K(a\otimes b)=(-1)^{\vert a\vert}\mu_{2,0}(a,b)$. We say $A$ and $B$ are \emph{commuting subalgebras} if given $c=K(a\otimes b)$ and $c_1,\ldots, c_k\in C$ such that for each $i$, $c_i=a_i$ or $c_i=b_i$ for some $a_i\in A$ and $b_i\in B$, the following conditions hold.
\begin{itemize}
\setlength{\parsep}{0pt}
\setlength{\itemsep}{0pt}
\item[{\bf(a)}] For $k>0$, $\mu_{k,\beta}(c_1,\ldots, c_k)=0$ unless
\begin{itemize}
\setlength{\parsep}{0pt}
\setlength{\itemsep}{0pt}
\item[{\bf(i)}] $(k,\beta)=(2,0)$ and $c_1 \in A$, $c_2 \in B$ (or vice-versa) in which case, $\mu_{2,0}(c_1,c_2)+(-1)^{\|c_1\|\|c_2\|}\mu_{2,0}(c_2,c_1)=0$,
\item[{\bf(ii)}] $c_i=a_i$ for all $i$ and $\beta\in G_1$,
\item[{\bf(iii)}] $c_i=b_i$ for all $i$ and $\beta\in G_2$.
\end{itemize}
\item[{\bf(b)}] $\mu_{0,\beta}=\m^A_{0,\beta}+\m^B_{0,\beta}$, with the convention that $\m^A_{0,\beta}=0$ (respectively $\m^B_{0,\beta}$) if $\beta\notin G_A$ (respectively $\beta\notin G_B$).
\item[{\bf(c)}] $\mu_{k+1,\beta}(c_1,\ldots,c_i,c,c_{i+1},\ldots,c_k)=0$ unless
\begin{itemize}
\setlength{\parsep}{0pt}
\setlength{\itemsep}{0pt}
\item[{\bf(i)}] $c_i=a_i$  for all $i$ and $\beta\in G_A$, in which case it equals
$$(-1)^{|b|\sum_{j> i}\|a_j\|}K(\m^A_{k+1,\beta}(a_1\ldots a_i,a,\ldots,a_k)\otimes b),$$
\item[{\bf(ii)}] $c_i=b_i$ for all $i$ and $\beta\in G_B$, in which case it equals
$$(-1)^{|a|\left(\sum_{j\leq i}\|b_j\|+1\right)}K(a\otimes \m^B_{k+1,\beta}(b_1\ldots b_i,b,\ldots,b_k)).$$
\end{itemize}
\end{itemize}

If $(A,\m^A)$, $(B, \m^B)$ and $(C,\mu)$ are \Ai-algebras modulo $T^E$, we say $A$ and $B$ are commuting subalgebras modulo $T^E$ of $C$ if the above conditions hold for all $\beta$ satisfying $E(\beta)\leq E$.
\end{defn}

\section{Fukaya algebra}\label{secfuk}

In this section we will review the construction of the \emph{Fukaya algebra} $\mF(L)$ of a relatively spin, compact Lagrangian $L$. This was constructed by Fukaya in \cite{Fuk} building on the work of Fukaya, Oh, Ohta and Ono in \cite{FOOO}, \cite{FOOOt1} and \cite{FOOOt2}. We refer the reader to \cite{Fuk} for a complete discussion of this construction. Throughout the section we will assume the reader is familiar with spaces with Kuranishi structures and good coordinate systems as defined in \cite[Appendix A1]{FOOO}. We refer the reader to \cite{FOOOtech} for a detailed exposition of these concepts.

In \S \ref{flmodte} we use the moduli spaces of stable disks to construct an \Ai-algebra modulo $T^E$ on the de Rham complex of $L$ for any $E\in \mathbb{R}_{>0}$. In \S \ref{modte} we will review how one can take the limit of this construction to obtain an \Ai-algebra. 

\subsection{$\mathcal{F}(L)$ modulo $T^E$}\label{flmodte}

Let $(M^{2n},\omega)$ be a $2n$-dimensional symplectic manifold, where $M$ is either compact or convex at infinity. Consider $L$ a compact Lagrangian submanifold that is either spin, or more generally \emph{relatively spin}, that is, $L$ is oriented and there is a degree two cohomology class $\sigma\in H^2(M,\mathbb{Z}_2)$ that restricts to the second Stiefel-Whitney class of $L$, that is $\sigma\vert_{L}=w_2(TL)$. We actually will need to fix a relative spin structure on $L$. 

The vector space underlying the Fukaya algebra $\mF(L)$ is $\mF(L):=\Omega^*(L)$, the de Rham complex of $L$. 
In order to construct the $A_\infty$-operations, we fix an almost complex structure $J$ on $M$ compatible with $\omega$. Now given $\beta\in \pi_2(M,L)$ consider $\mM_{k+1}(\beta)$ the (compactified) moduli space of $J$-holomorphic disks with $k+1$ boundary points,  with boundary on $L$ and homotopy class $\beta$. Denote by $[(\Sigma,\vec{z}),u]$ an element of $\mM_{k+1}(\beta)$ where $\Sigma$ is a semi-stable disk, $\vec{z}=(z_0,\ldots,z_k)$ are $k+1$ marked points on $\partial\Sigma$ respecting the cyclic order and $u:(\Sigma,\partial\Sigma)\lto(M,L)$ is a $J$-holomorphic map such that $[u]=\beta$.
There are natural evaluation maps:
\begin{align}
ev_i:&\mM_{k+1}(\beta)\lto L\nonumber\\
&((\Sigma,z_0,\ldots,z_k),u)\longmapsto u(z_i).\nonumber
\end{align}
It is proven in \cite{FOOO} that $\mM_{k+1}(\beta)$ has a Kuranishi structure (see \cite[Appendix A1]{FOOO} for the definition) with corners of virtual dimension $\vdim=n+\mu(\beta)+k-2$. Additionally we have the following description of its boundary:
\begin{align}\label{boundary}
\partial \mM_{k+1}(\beta)=\bigcup_{\substack{\beta_1+\beta_2=\beta\\0\leq j\leq k\\1\leq i\leq k-j+2}}(-1)^{n+i(1+j)}\mM_{j+1}(\beta_1)\ {}_{ev_0}\times_{ev_i}\mM_{k-j+2}(\beta_2),
\end{align} where the equality holds as spaces with oriented Kuranishi structures. In this formulation, the statement about the orientations can be found in \cite[Theorem 5.9]{AkaJoy}.

In \cite{Fuk}, Fukaya showed that this construction can be carried out in a way compatible with forgetting (boundary) marked points. More precisely, for each $k\geq0$ and $0\leq i \leq k+1$ consider the map
$$\forg_i:\mM_{k+1}(\beta)\lto\mM_k(\beta)$$
that forgets the $i$-th marked point, and collapses any irreducible components that become unstable. We require that the Kuranishi structures are compatible with the maps $\forg_i$ in the following sense.

\begin{defn}\label{kurcompatible}
Let $\varphi: X\lto Y$ be a continuous map between spaces with Kuranishi structures. We say the Kuranishi structures \emph{compatible} (with respect to $\varphi$) if for every $p\in X$ and $q=\varphi(p)$, there is a map between the Kuranishi neighborhoods $(V_p,E_p,\Gamma_p,s_p,\psi_p)$ and $(V_q,E_q,\Gamma_q,s_q,\psi_q)$. The map consists of the following data:
\begin{itemize}
\setlength{\parsep}{0pt}
\setlength{\itemsep}{0pt}
\item[{\bf(a)}] an injective homomorphism $h_{pq}:\Gamma_p\lto\Gamma_q$;
\item[{\bf(b)}] a continuous, $h_{pq}$-equivariant map $\varphi_{pq}:V_p\lto V_q$, that is smooth on a dense, open subset of $V_p$;
\item[{\bf(c)}] an isomorphism $E_p\simeq \varphi_{pq}^*E_q$;
\item[{\bf(d)}] $s_p=\varphi_{pq}^*s_q$;
\item[{\bf(e)}] $\varphi\circ\psi_p=\psi_q\circ\varphi_{pq}$ on $s^{-1}_p(0)/\Gamma_p$.
\end{itemize}
\end{defn}

This definition is a slight weakening of the definition given in Sections 3 of \cite{Fuk}. Fukaya in \cite[Corollary 3.1]{Fuk} shows the following:
\begin{prop}\label{compKur}
There exist Kuranishi structures in $\mM_{k+1}(\beta)$ that are compatible, in the sense of Definition {\rm\ref{kurcompatible}}, with $\forg_i$, the map
$$ev_0:\mM_{k+1}(\beta)\lto L$$ 
is weakly submersive and the decomposition of the boundary (\ref{boundary}) holds as spaces with Kuranishi structures.
\end{prop}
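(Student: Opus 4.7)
The plan is to construct the Kuranishi structures by induction on the pair $(\omega(\beta), k)$ ordered lexicographically, maintaining the three required properties at every stage. For the base case, when $\beta = 0$ the moduli space $\mM_{k+1}(0)$ is nonempty only when $k \geq 2$ and is then diffeomorphic to $L \times \overline{\mM}_{k+1}^{\rm disk}$, which is a smooth manifold carrying the trivial Kuranishi structure. The three conditions are immediate in this case: $ev_0$ is projection to $L$, each $\forg_i$ is projection to the forgetful factor of the Deligne--Mumford space, and the boundary of that factor yields the required boundary decomposition.

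For the inductive step, I would address the forgetful compatibility first. Given $p = [(\Sigma, \vec{z}), u] \in \mM_{k+1}(\beta)$, let $p^{\rm st} \in \mM_{k'+1}(\beta)$ with $k' \leq k$ denote the representative obtained by forgetting those boundary marked points whose removal leaves the underlying curve stable. By the inductive hypothesis a Kuranishi chart at $p^{\rm st}$ is already fixed, and I would build the chart at $p$ as the product of this chart with a parameter space recording the positions of the $k - k'$ extra marked points on $\Sigma^{\rm st}$, with the obstruction bundle and the Kuranishi section pulled back from $p^{\rm st}$. By construction $\forg_i$ is then a Kuranishi map in the sense of Definition \ref{kurcompatible} tautologically. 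To then arrange that $ev_0$ is weakly submersive, I would enlarge the obstruction bundle near $p$ by adjoining a summand isomorphic to $T_{u(z_0)} L$, realised via a local trivialisation of $TL$ around $u(z_0)$; since this added summand is pulled back through $ev_0$, it is automatically compatible with all $\forg_i$, and the previously established forgetful compatibility is preserved.

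For the boundary compatibility, each stratum of $\partial \mM_{k+1}(\beta)$ appearing in (\ref{boundary}) is a fiber product $\mM_{j+1}(\beta_1) \ {}_{ev_0}\times_{ev_i} \mM_{k-j+2}(\beta_2)$ whose factors are strictly smaller in the induction (either $E(\beta_i) < E(\beta)$, or $\beta_i = \beta$ with fewer marked points on the other factor). By the inductive hypothesis both factors already carry Kuranishi structures with weakly submersive $ev_0$, so the fiber product Kuranishi structure is well-defined, and the chart at a boundary point is prescribed to restrict to it. The main obstacle is the simultaneous coherence of the three prescriptions at a point that lies on the boundary and also has unstable ghost components: there the forgetful-induced chart and the boundary-induced chart both act, and must be reconciled before any extension into the interior can be produced. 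Resolving this requires the construction of a good coordinate system in the sense of \cite[Appendix A1]{FOOO} in which both prescriptions are realised simultaneously, along the lines of \cite[Section 3]{Fuk}; this technical core of the argument is deferred to Section \ref{proofs}.
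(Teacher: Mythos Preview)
The paper does not prove this proposition itself; it attributes the result to Fukaya \cite[Corollary 3.1]{Fuk}. The construction is, however, reviewed in detail in \S\ref{proof1} as the first step in proving Proposition \ref{compkuranishi}, so your outline should be compared against that.

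Your inductive scheme differs from Fukaya's in a substantive way, and as written it has a gap. Fukaya's induction is on $E(\beta)$ alone: for each $\beta$ he first constructs the Kuranishi structure on $\mM_1(\beta)$, and then obtains the structure on $\mM_{k+1}(\beta)$ for all $k$ by pulling back along the total forgetful map $\forg:\mM_{k+1}(\beta)\to\mM_1(\beta)$. The stabilisation of the domain is done by adding \emph{interior} marked points (together with transversal slices $N_w$), not by removing boundary marked points; and the weak submersivity of $ev_0$ is built in at the level of $\mM_1(\beta)$ via Lemma \ref{0bundle}(c), which chooses the obstruction space $E(u)$ so that $K(u)=(D_u\bar\partial)^{-1}(E(u))$ surjects onto $T_{u(z_0)}L$. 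Your scheme inducts on $(\omega(\beta),k)$ and passes to a $p^{\rm st}$ with strictly fewer boundary marked points, but this never tells you how to build the chart when $k=0$ and $\beta\neq 0$: for $p\in\mM_1(\beta)$ there is nothing left to forget, $p^{\rm st}=p$, and your inductive hypothesis gives you nothing. That case is precisely where all the analysis (Lemma \ref{0bundle}, Proposition \ref{gluinganalysis}) lives.

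Your device for arranging weak submersivity is also not the one used. Simply ``adjoining a summand isomorphic to $T_{u(z_0)}L$'' to the obstruction bundle does not by itself enlarge the tangent space to $V_p$ in directions that move $u(z_0)$; one needs to enlarge $E(u)$ inside $L^p(\Sigma,u^*TM\otimes\Lambda^{0,1})$ so that the preimage under $D_u\bar\partial$ grows in the right directions, which is exactly the content of Lemma \ref{0bundle}(c).
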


Moreover, Fukaya showed that this Kuranishi structures admit good coordinate systems and systems of transversal multisections compatible with the forgetful map, in the following sense.

\begin{defn}
Let $\varphi:X\lto Y$ be a continuous map between spaces $X$ and $Y$ with compatible Kuranishi structures. Good coordinate systems $\{\mathcal{U}_\alpha\}_{\alpha\in I}$ on $X$ and $\{\mathcal{V}_\beta\}_{\beta\in J}$ are said to be \emph{compatible} if there is an order-preserving map $I\lto J$, $\alpha\lto \beta (\alpha)$ and a map between the Kuranishi neighborhoods $\mathcal{U}_\alpha$ and $\mathcal{V}_{\beta(\alpha)}$ .   
\end{defn}

\begin{defn}\label{corrcompatible}
Let $\{\mathcal{U}_\alpha\}_{\alpha\in I}$ and $\{\mathcal{V}_\beta\}_{\beta\in J}$ be compatible good coordinate systems on $X$ and $Y$. Continuous families of multisections $(W_\alpha, S_\alpha)_{\alpha\in I}$ and  $(W_\beta, S_\beta)_{\beta\in J}$ are said to be \emph{compatible} if $W_\alpha=W_{\beta(\alpha)}$, $\theta_\alpha=\theta_{\beta(\alpha)}$ and $S_\alpha=S_\beta\circ(\textrm{id}\times\varphi_{\alpha\beta})$. Moreover we require that $\textrm{id}\times\varphi_{\alpha\beta} |_{S_{\alpha}^{-1}(0)}$ is smooth in a dense, open subset of $S_{\alpha}^{-1}(0)$.
\end{defn} 

Again these are small modifications of the definitions given in Section 5 of \cite{Fuk}.

\begin{prop}[Fukaya \cit{Fuk}]\label{compmulti}
For each $\epsilon$ and $E> 0$, there exist continuous families of transversal multisections on $\mM_{k+1}(\beta)$, for $k\geq 0$ and $\omega\cap\beta\leq E$, which are $\epsilon$-small. These are compatible, in the sense of Definition \ref{corrcompatible}, with $\forg_i$ and the evaluation maps $(ev_0)_\a\vert_{S^{-1}_\a(0)}$ are submersive. Moreover given the decomposition of the boundary \eq{boundary}, the restriction of the multisections to the boundary agrees with the fiber product of multisections on the right-hand side of \eq{boundary}.
We will denote this by a system of compatible multisections.
\end{prop}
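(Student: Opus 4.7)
The plan is a double induction. The outer loop runs over the finitely many $\beta \in G$ with $E(\beta) \leq E$, ordered by increasing energy, while the inner level is reduced to the minimally-marked case by the forgetful-map compatibility: Definition \ref{corrcompatible} forces the multisection on $\mM_{k+1}(\beta)$ to be the pullback along the iterated forgetful map $\mM_{k+1}(\beta) \lto \mM_1(\beta)$ when $\beta \neq 0$ (the $\beta = 0$ case being trivial, as $\mM_{k+1}(0)$ carries no obstruction bundle). It thus suffices to build a continuous family of transversal multisections on $\mM_1(\beta)$ and to verify that its pullbacks to $\mM_{k+1}(\beta)$ inherit transversality, submersivity of $ev_0$, and the prescribed boundary restriction.

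For the outer step, fix $\beta$ and assume compatible multisections have been produced on every $\mM_{k+1}(\beta')$ with $E(\beta') < E(\beta)$. Each boundary stratum in \eq{boundary} is a fiber product $\mM_{j+1}(\beta_1) \times_L \mM_{k-j+2}(\beta_2)$ with $\beta_1, \beta_2 \neq 0$, hence both $E(\beta_i) < E(\beta)$ strictly. The inductively-constructed multisections on the factors therefore determine a multisection on $\partial \mM_1(\beta)$ by fiber product, well-defined because $ev_0$ is submersive on the previously-constructed zero loci. One then extends this boundary datum inward using a good coordinate system: ascending the partial order on Kuranishi charts, one extends the multisection chart by chart, patching with a partition of unity subordinate to the cover, as in the standard construction of abstract perturbations on Kuranishi spaces.

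Transversality together with submersivity of $ev_0$ on the zero locus is arranged by working with continuous families in the sense of \cit{Fuk}. Each chart is augmented by an auxiliary smooth parameter manifold $W_\a$ with a compactly supported top form $\theta_\a$, and the perturbation is allowed to depend on $w \in W_\a$. Since the unperturbed $ev_0$ is already weakly submersive by Proposition \ref{compKur}, a generic small perturbation in this enlarged parameter space achieves transversality while preserving submersivity, both being open conditions, and $\epsilon$-smallness is then obtained by rescaling. The main obstacle is ensuring that the two compatibility conditions, one along the forgetful maps $\forg_i$ and one along the boundary decomposition, remain simultaneously enforceable at each stage: their consistency follows from the commutativity of $\forg_i$ with the boundary maps in \eq{boundary}, but verifying that there remains enough freedom in the continuous-family parameter space to achieve transversality subject to both constraints is the technical heart of the argument, carried out in \cit{Fuk}.
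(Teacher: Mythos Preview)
Your overall architecture matches the paper's (and Fukaya's) approach: induct on $E(\beta)$, build the multisection first on $\mM_1(\beta)$, and obtain the multisections on $\mM_{k+1}(\beta)$ by pullback along the forgetful map, with the boundary data fixed inductively via the fiber-product decomposition \eq{boundary}. The paper itself states this proposition without proof, attributing it to \cite{Fuk}, but reviews the construction in \S\ref{proof3} when proving the product analogue (Proposition~\ref{compmultisection}).

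There is, however, one genuine technical point that your sketch elides and that the paper singles out as the main difficulty. You write that the pullback multisection inherits transversality and submersivity because these are open conditions under a generic small perturbation. But the map $\varphi: V_p \to V_q$ between Kuranishi charts induced by $\forg$ is only \emph{continuous}, not smooth, because of the special smooth structure on the moduli spaces coming from the gluing coordinates (see \cite[Appendix A.1.4]{FOOO} and the discussion after Proposition~\ref{gluinganalysis}). Hence the pullback $S_p = S_q \circ \varphi$ is not automatically a smooth multisection, and without smoothness one cannot even speak of transversality or of $S_p^{-1}(0)$ as a manifold. The fix, as explained in \S\ref{proof3}, is to impose an exponential decay condition on $S_q$ in the directions transverse to each stratum: in the gluing coordinate $T$, one requires $|\partial^{k+l} S_q / \partial T^k \partial x^l| < C e^{-cT}$. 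Since $\varphi$ is locally either a submersion or, when a component collapses, given by $T = T_1 + T_2$, this decay is preserved under pullback, and $S_p$ is then smooth. Transversality and submersivity of $ev_0$ are first arranged stratum-by-stratum (where $\varphi$ \emph{is} a smooth submersion) and are unaffected by the decay condition in the normal directions. Your final paragraph defers the ``technical heart'' to \cite{Fuk}, which is fair, but this smoothness issue is precisely that heart, and it is not the freedom-in-parameter-space issue you name.
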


We are now ready to define the \Ai-operations. First define 
$$NE(L)=\{\beta\in \pi_2(M,L,\mathbb{Z})\vert \mM_1(\beta)\neq\emptyset\}.$$
Since $L$ is orientable the Maslov index $\mu(\beta)$ is always an even number. We consider the map
$$E\oplus\mu:NE(L)\lto\mathbb{R}_{\geq0}\times2\mathbb{Z},$$
where $E(\beta)=\omega \cap \beta$ and $\mu$ is the Maslov class. Denote  by $G(L)$ the submonoid generated by its image. Gromov's compactness implies that the number of elements $\beta\in NE(L)$ such that $E(\beta)\leq E$ for fixed $E$ is finite. Therefore $G(L)$ is a discrete submonoid.

Fix $E> 0$ and a system of multisections provided by Proposition \ref{compmulti}. For each $\beta \in NE(L)$, $k\geq 0$, such that $E(\beta)\leq E$ and $(k,\beta)\neq(1,0)$, given $\xi_1,\ldots,\xi_k\in\Omega^*(L)$, we define:
$$\m'_{k,\beta}(\xi_1,\ldots,\xi_k)= (-1)^{\sum_{i=1}^k(k-i)\vert\xi_i\vert}\Corr\big(ev_1\times\ldots\times ev_k,\mM_{k+1}(\beta)^S,ev_0\big)(\xi_1\times\ldots\times\xi_k),$$
where $\Corr$ is the smooth correspondence map defined in \cite{FOOOt2}, that we review in \S \ref{smoothcorr}. For $\beta \in G(L)$ we define
$$\m'_{k,\beta}= \sum_{\substack{\beta'\\ E\oplus\mu(\beta')=\beta}} \m'_{k,\beta'}.$$
In the remaining case $(k,\beta)=(1,0)$ we set $\m'_{1,0}(\xi)=(-1)^{n+1}d\xi$, where $d$ is the de Rham differential.
Finally we set 
$$\m_{k,\beta}=(-1)^{\frac{(k-1)(k-2)}{2}}\m'_{k,\beta}.$$

\begin{prop}\label{aalgebra} The maps $\m_{k,\beta}$ define a filtered $A_\infty$-algebra modulo $T^E$ on the de Rham complex $\Omega^*(L)$. That is for each $\beta$ such that $E(\beta)\leq E$ and $k\geq 0$ we have
\begin{align}\label{Ainfeq}
\sum_{\substack{\beta_1+\beta_2=\beta \\0\leq j\leq k\\1\leq i\leq k-j+1}}(-1)^{\sum_{l=1}^{i-1} ||\xi_l||}\m_{k-j+1,\beta_1}(\xi_1,\ldots,\m_{j,\beta_2}(\xi_{i},\ldots,\xi_{i+j-1}),\ldots,\xi_k)=0.
\end{align}
\end{prop}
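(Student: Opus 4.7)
The strategy is the standard one: apply Stokes' theorem for smooth correspondences to the Kuranishi space $\mM_{k+1}(\beta)^S$ equipped with the system of transversal multisections provided by Proposition \ref{compmulti}, then use the boundary decomposition \eq{boundary} to match up the result with the terms appearing in the $A_\infty$ relation \eq{Ainfeq}. The appendix on fiber integration and smooth correspondences (to be proved later in the paper) will provide the two inputs: (i) a Stokes-type identity of the form
$$d\,\Corr(f,X^S,g)(\eta) \;-\; (-1)^{?}\,\Corr(f,X^S,g)(d\eta) \;=\; \Corr(f,\partial X^S,g)(\eta),$$
valid when $g$ is submersive on the zero set of the multisection, and (ii) a fiber-product property expressing $\Corr$ of a fiber product $X_1\,{}_{f_1}\!\times_{g_2}\,X_2$ as the composition of $\Corr(X_2)$ and $\Corr(X_1)$. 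Since $ev_0$ is submersive by Proposition \ref{compKur} and the multisections are transversal by Proposition \ref{compmulti}, both inputs apply to our moduli spaces.

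First I would write the identity above for $X=\mM_{k+1}(\beta)^S$ with $f=ev_1\times\cdots\times ev_k$, $g=ev_0$, and $\eta=\xi_1\times\cdots\times\xi_k$. The term $\Corr(f,X^S,g)(d\eta)$ expands by Leibniz into a sum over $i$ of operations with $d\xi_i$ in the $i$th slot, which (up to the sign $(-1)^{n+1}$ and the Koszul sign from moving $d$ past $\xi_1,\ldots,\xi_{i-1}$) is exactly $\m'_{k,\beta}(\xi_1,\ldots,\m'_{1,0}(\xi_i),\ldots,\xi_k)$. The term $d\,\Corr(f,X^S,g)(\eta)$ equals $\m'_{1,0}(\m'_{k,\beta}(\xi_1,\ldots,\xi_k))$ up to the same $(-1)^{n+1}$. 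So the left-hand side of Stokes already accounts for all $A_\infty$-terms in \eq{Ainfeq} involving $\m_{1,0}$.

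Next I would rewrite the right-hand side $\Corr(f,\partial X^S,g)(\eta)$ using the decomposition \eq{boundary}: for each splitting $\beta=\beta_1+\beta_2$, each $0\leq j\leq k$ and each $1\leq i\leq k-j+1$, the corresponding boundary piece is (with sign $(-1)^{n+i(1+j)}$) the fibered product $\mM_{j+1}(\beta_2)\,{}_{ev_0}\!\times_{ev_i}\,\mM_{k-j+2}(\beta_1)$. By the fiber-product property of $\Corr$ and the compatibility of the multisections with the boundary stratification (again Proposition \ref{compmulti}), the correspondence associated to this piece evaluated on $\xi_1\times\cdots\times\xi_k$ equals
$$\m'_{k-j+1,\beta_1}\bigl(\xi_1,\ldots,\xi_{i-1},\m'_{j,\beta_2}(\xi_i,\ldots,\xi_{i+j-1}),\xi_{i+j},\ldots,\xi_k\bigr),$$
modulo a Koszul sign that pushes $ev_i$ past $ev_1,\ldots,ev_{i-1}$ and permutes the forms $\xi_1,\ldots,\xi_{i-1}$ past the inner correspondence. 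Summing over all boundary strata yields the compositional part of the $A_\infty$ relation, with explicit signs coming from (a) the boundary sign $(-1)^{n+i(1+j)}$, (b) the internal sign $(-1)^{\sum(k-l)|\xi_l|}$ in the definition of $\m'$, (c) the Koszul sign $(-1)^{\|\xi_1\|+\cdots+\|\xi_{i-1}\|\cdot(\text{deg of inner operation})}$ produced by the fiber-product identification, and (d) the overall normalisation $(-1)^{(k-1)(k-2)/2}$ relating $\m$ to $\m'$.

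The main obstacle I expect is the sign bookkeeping: I need to verify that the four sources of signs above collapse to exactly $(-1)^{\sum_{l<i}\|\xi_l\|}$ required by \eq{Ainfeq}, and that the contributions from $d\,\Corr$ and $\Corr\circ d$ on the LHS of Stokes correspond exactly to the missing $(k-j+1,\beta_1)=(1,0)$ and $(j,\beta_2)=(1,0)$ terms on the RHS. The constants $n+1$, $(k-1)(k-2)/2$ and $(k-i)|\xi_i|$ in the definitions of $\m'_{1,0}$, $\m_{k,\beta}$ and $\m'_{k,\beta}$ are precisely engineered for this cancellation, mirroring the sign manipulations Fukaya carries out in \cite{Fuk}; it remains a routine but delicate computation. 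Finally, the special case $(k,\beta)=(1,0)$ in \eq{Ainfeq} reduces to $d^2=0$, and the case where one of the moduli spaces in a boundary stratum is $\mM_2(0)=\emptyset$ (unstable domain) is automatically absent from \eq{boundary}, so no further case distinctions are needed.
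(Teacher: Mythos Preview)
Your proposal is correct and follows essentially the same approach as the paper: apply the Stokes identity for smooth correspondences (Proposition~\ref{kurstokes}) to $\mM_{k+1}(\beta)^S$, use the boundary decomposition \eq{boundary} together with the compatibility of multisections from Proposition~\ref{compmulti}, and then invoke the composition formula for correspondences (Proposition~\ref{composition}) to identify each boundary contribution with a nested $\m'$; the paper likewise finishes by saying that inserting the signs in the definition of $\m_{k,\beta}$ gives the $A_\infty$-equation after a straightforward computation.
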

\begin{proof} Proposition \ref{kurstokes} implies that
\begin{align}
&(-1)^{k+1}d \Corr\left(ev_1\times\ldots\times ev_k,\mM_{k+1}(\beta)^S,ev_0\right)(\xi_1\times\ldots\times\xi_k)\nonumber\\
&+\sum_{i=1}^{k}(-1)^{\sum_{l=1}^{i-1}|\xi_l|}\Corr\left( ev_1\times\ldots\times ev_k,\mM_{k+1}(\beta)^S,ev_0\right)(\xi_1\times\ldots\times d\xi_i\times\ldots\xi_k)\nonumber\\
&=\Corr\left(ev_1\times\ldots\times ev_k,\partial\mM_{k+1}(\beta)^S,ev_0\right)(\xi_1\times\ldots\times\xi_k)\nonumber\\
&=\sum_{\substack{\beta_1+\beta_2=\beta\\0\leq j\leq k}}(-1)^{\epsilon}\Corr\left(ev_1\times\ldots\times ev_k,\mM_{j+1}(\beta_2)^S\ _{ev_0}\times_{ev_i}\mM_{k-j+2}(\beta_1)^S, ev_0\right)\big( \ \vec\xi \ \big),\nonumber
\end{align}
where $\epsilon=n+i(j+1)$ and the last equality follows from Proposition \ref{compmulti}. In turn, Proposition \ref{composition} implies
\begin{align}
&\Corr\left(ev_1\times\ldots\times ev_k,\mM_{j+1}(\beta_2)^S\ _{ev_0}\times_{ev_i}\mM_{k-j+2}(\beta_1)^S,ev_0\right)(\xi_1\times\ldots\times\xi_k)=\nonumber\\
&=(-1)^{(\sum_{l=1}^{i-1}|\xi_l|)j}\Corr\left(ev_1\times\ldots\times ev_{k-j+2},\mM_{k-j+2}(\beta_1)^S, ev_0\right)\left(\xi_1\times\ldots\times\xi_{i-1}\times\right.\nonumber\\
&\hspace{2cm}\left.\Corr(ev_1\times\ldots\times ev_j,\mM_{j+1}(\beta_2)^S,,ev_0)(\xi_i\times\ldots\times \xi_{i+j-1})\times\ldots\times\xi_k\right).\nonumber
\end{align}
Introducing the signs in the definition of $\m_{k,\beta}$, a straightforward computation shows the $A_\infty$-equation.
\end{proof}

Next we want to show that $(\Omega^*(L), \m_{k,\beta})$ has a unit. For these we need to use compatibility with the  forgetful maps together with the following proposition whose proof we postpone to \S \ref{smoothcorr}.

\begin{prop}\label{kurvanishingpi*}
Let $X$ and $Y$ be Kuranishi spaces. Let $f':X\lto M$ and $g':Y\lto M$ be smooth strongly continuous maps and $f: X\lto N$, $g:Y\lto N$ be weak submersions. Also, let $(W_\alpha, S_\alpha)_{\alpha\in I}$ and  $(W_\beta, S_\beta)_{\beta\in J}$ be compatible continuous families of multisections such that $f_\alpha\vert_{S^{-1}_\alpha(0)}$ and $g_\beta\vert_{S^{-1}_\beta(0)}$ are submersions. Assume that $f=g\circ\varphi$, $f'=g'\circ\varphi$ and $\vdim X>\vdim Y$. Then $$\Corr(f',X^S,f)(\xi)=0$$ for any $\xi\in\Omega^*(M)$.
\end{prop}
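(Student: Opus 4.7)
The plan is to carry out, within the Kuranishi/multisection framework, the elementary fact that fiber integration of a form pulled back through a map whose target has smaller dimension than the fiber must vanish. Since $f' = g'\circ\varphi$, the pullback $(f')^*\xi$ equals $\varphi^*\eta$ with $\eta:=(g')^*\xi\in\Omega^*(Y)$, so $\Corr(f', X^S, f)(\xi)$ is reduced to the fiber integral of $\varphi^*\eta$ along $f = g\circ\varphi$.

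First I would work in the compatible good coordinate systems $\{\mathcal{U}_\alpha\}_{\alpha\in I}$ on $X$ and $\{\mathcal{V}_\beta\}_{\beta\in J}$ on $Y$ together with the compatible families of multisections provided by hypothesis. By Definition \ref{kurcompatible} together with compatibility of the good coordinate systems, on each chart there is a map $\varphi_{\alpha\beta(\alpha)}: V_\alpha\to V_{\beta(\alpha)}$ satisfying $S_\alpha = S_{\beta(\alpha)}\circ(\mathrm{id}\times\varphi_{\alpha\beta(\alpha)})$; in particular it restricts to a map $S_\alpha^{-1}(0)\to S_{\beta(\alpha)}^{-1}(0)$. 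The factorisation hypotheses descend to chart-level identities $f_\alpha = g_{\beta(\alpha)}\circ\varphi_{\alpha\beta(\alpha)}$ and $f'_\alpha = g'_{\beta(\alpha)}\circ\varphi_{\alpha\beta(\alpha)}$.

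The key local step is a pointwise dimension count. On $S_\alpha^{-1}(0)$, the map $f_\alpha$ is a submersion to $N$ with fibres of dimension $\vdim X - \dim N$, while each such fibre is sent by $\varphi_{\alpha\beta(\alpha)}$ into a fibre of $g_{\beta(\alpha)}$ of dimension $\vdim Y - \dim N$. Since $\vdim X > \vdim Y$, the differential of $\varphi_{\alpha\beta(\alpha)}$ restricted to any fibre of $f_\alpha$ has rank strictly less than the fibre's dimension, and hence the restriction of $\varphi^*\eta$ to the fibre vanishes in top degree: a form of degree $\vdim X - \dim N$ built from at most $\vdim Y - \dim N$ independent $1$-forms must be zero. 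Fibre integration along $f_\alpha$ extracts precisely this top vertical component and therefore yields zero on each chart. Summing the chart-wise computations with the partition of unity that enters the definition of $\Corr$ gives $\Corr(f', X^S, f)(\xi) = 0$.

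The geometric idea is straightforward; the main obstacle is executing it rigorously inside the Kuranishi framework so that the compatibility of multisections, the smoothness of the partition-of-unity construction, and the boundary behaviour all cohere. This requires the properties of smooth correspondences (for Kuranishi spaces equipped with continuous families of multisections) developed in Section \ref{smoothcorr}. A very similar argument is used by Fukaya in \cite{Fuk} to establish strict unitality of $\mF(L)$, with the forgetful map $\forg_i$ playing the role of $\varphi$ and the dimension drop equal to one; the proof strategy here is a direct generalisation of his.
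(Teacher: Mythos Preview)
Your approach is essentially the paper's: reduce to a single Kuranishi chart, use compatibility of the multisections to get an induced map $\varphi_{\alpha\beta(\alpha)}$ between the zero sets, and then invoke the elementary vanishing of fibre integration for forms factoring through a lower-dimensional target.

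There is one technical point you pass over that the paper handles explicitly. You speak of ``the differential of $\varphi_{\alpha\beta(\alpha)}$'' and its rank, and of the form $\varphi^*\eta$; but as noted in \S\ref{proof1} the chart-level map $\varphi_{\alpha\beta(\alpha)}$ is in general only continuous (smooth only on each stratum), so neither its differential nor the pullback $\varphi^*\eta$ is a priori defined. The paper avoids this via Remark~\ref{vannotsmooth}: one never pulls back along $\varphi$, but works directly with the smooth form $(f'_\alpha)^*\xi$ and observes in local coordinates that, because $f'_\alpha$ and $f_\alpha$ factor (set-theoretically) through a target of strictly smaller dimension, $(f'_\alpha)^*\xi$ already lacks a top fibre-degree summand. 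A minor related point: the zero sets $S^{-1}_{\alpha,i,j}(0)\subset W_\alpha\times V_\alpha$ have dimension $\vdim X+\dim W_\alpha$, not $\vdim X$; your dimension count omits the $W_\alpha$ factor, but since $W_\alpha=W_{\beta(\alpha)}$ by Definition~\ref{corrcompatible} it cancels and the inequality $\dim S^{-1}_{\alpha,i,j}(0)>\dim S^{-1}_{\beta,i,j}(0)$ survives.
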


Assuming this result we can easily prove

\begin{prop}
 Let $e_L \in \Omega^0(L)$ be the constant function equal to one. Then $e_L$ is a unit for \Ai-algebra modulo $T^E$ $(\Omega^*(L), \m_{k,\beta})$, that is
$$\m_{k,\beta}(\xi_1,\ldots,\xi_{i-1},e_L,\xi_{i+1},\ldots,\xi_k)=0$$
for all $(k,\beta)\neq(2,0)$ and $\m_{2,0}(e_L,\xi)=(-1)^{\vert\xi\vert}\m_{2,0}(\xi,e_L)=\xi$.
\end{prop}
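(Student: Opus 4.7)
The plan is to split the argument into cases based on $(k,\beta)$, using the forgetful map $\forg_i: \mM_{k+1}(\beta) \lto \mM_k(\beta)$ (which forgets the $i$-th marked point) together with Proposition \ref{kurvanishingpi*} as the main vanishing mechanism, and handling by hand the two exceptional pairs where this map is unavailable.

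First I would dispatch $(k,\beta) = (1,0)$ immediately: by definition $\m_{1,0}(e_L) = (-1)^{n+1} d(e_L) = 0$ since $e_L$ is constant. Next, for $(k,\beta) = (2,0)$ --- the only remaining case in which $\mM_k(\beta)$ is unstable, so that $\forg_i$ cannot be used --- I would compute directly. The moduli space $\mM_3(0)$ is diffeomorphic to $L$ (a constant disk with three boundary marked points, modulo the automorphism group of the disk), and the three evaluation maps all coincide with $\mathrm{id}_L$. The correspondence then collapses to the wedge product on $L$, and after unwinding the signs $(-1)^{\sum(k-i)|\xi_i|}$ and $(-1)^{(k-1)(k-2)/2}$ one arrives at $\m_{2,0}(\xi_1, \xi_2) = (-1)^{|\xi_1|}\xi_1 \wedge \xi_2$, which directly yields the required unit relations $\m_{2,0}(e_L,\xi) = \xi = (-1)^{|\xi|}\m_{2,0}(\xi, e_L)$.

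For every other $(k,\beta) \neq (2,0)$ with $k \geq 1$ I would apply Proposition \ref{kurvanishingpi*} with $X = \mM_{k+1}(\beta)$, $Y = \mM_k(\beta)$ and $\varphi = \forg_i$. The two key observations are: (i) because $e_L \equiv 1$, the input form $\xi_1 \times \cdots \times e_L \times \cdots \times \xi_k$ is the pullback of $\xi_1 \times \cdots \times \hat{\xi}_i \times \cdots \times \xi_k$ under the projection $L^{\times k} \lto L^{\times(k-1)}$ dropping the $i$-th factor; and (ii) by Propositions \ref{compKur} and \ref{compmulti}, the evaluation maps $ev_0$ and $ev_j$ for $j \neq i$ all factor through $\forg_i$, and the continuous families of multisections on the two moduli spaces are compatible. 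Hence both the domain and codomain legs of the correspondence factor through $Y$, while $\vdim X = n + \mu(\beta) + k - 2$ exceeds $\vdim Y = n + \mu(\beta) + k - 3$ by exactly one, so the hypotheses of Proposition \ref{kurvanishingpi*} are met and the correspondence vanishes.

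The principal technical obstacle I foresee is verifying that the compatibility of multisections and the submersivity of $ev_0$ granted by Proposition \ref{compmulti} translate precisely into the form required by Proposition \ref{kurvanishingpi*} --- in particular that the families $(W_\alpha, S_\alpha)$ on $\mM_{k+1}(\beta)$ and $\mM_k(\beta)$ interact correctly with $\forg_i$, and that the weak submersivity hypothesis on $\forg_i$ does not conflict with that on $ev_0$. Once those compatibilities are pinned down, the dimension count makes the vanishing essentially automatic, and the case analysis above covers every $(k,\beta) \neq (2,0)$.
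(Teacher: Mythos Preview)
Your proposal is correct and follows essentially the same approach as the paper: use the forgetful map $\forg_i$ together with Proposition~\ref{kurvanishingpi*} for the generic case, and handle the exceptional $(k,\beta)$ by hand. The paper's proof is simply more terse---it absorbs your $(1,0)$ case (trivial since $de_L=0$) into the general vanishing statement and dismisses the $(2,0)$ computation as ``obvious,'' whereas you spell both out; the technical compatibility concern you raise at the end is precisely what Propositions~\ref{compKur} and~\ref{compmulti} are designed to guarantee, so it is not an actual gap.
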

\begin{proof}
 Consider the forgetful map $\forg_i:\mM_{k+1}(\beta)\lto\mM_k(\beta)$. The system of multisections is compatible with this map and the evaluation maps $ev_j$ for $j\neq i$ factor through this map. Therefore Proposition \ref{kurvanishingpi*} immediately implies $\m_{k,\beta}(\xi_1,\ldots,\xi_{i-1},e_L,\xi_{i+1},\ldots,\xi_k)=0$ for all $(k,\beta)\neq(2,0)$. The last statement is obvious.
\end{proof}

\subsection{From $A_\infty$-algebra modulo $T^E$ to full-fledged $A_\infty$-algebra}\label{modte}

So far we have constructed an $A_\infty$-algebra on $\mF(L)$ modulo $T^E$ for arbitrary $E>0$. We will now explain how to obtain an actual $A_\infty$-algebra. In fact, this argument can be easily adapted to prove that $\mF(L)$ is independent of the almost complex structure and the choices of systems of multisections up to quasi-isomorphism. 

We start with the definition of pseudoisotopy between \Ai-algebras given in \cite[Section 9]{Fuk}. We will restrict ourselves to \Ai-algebras on $\Omega^*(L)$ and we will ignore the cyclic structures.
\begin{defn}\label{defisotopy}
Let $\m^{(0)}$ and $\m^{(1)}$ be filtered $G$-gapped \Ai-algebra structures on $\Omega^*(L)$. A pseudoisotopy between $\m^{(0)}$ and $\m^{(1)}$ is a pair $(\m^t_{k,\beta}, \mc^t_{k,\beta})$, where 
$$\m^t_{k,\beta}, \mc^t_{k,\beta}: \Omega^*(L)^{\otimes k} \lto \Omega^*(L) $$
 are maps of degree $2-k-\mu(\beta)$ and $1-k-\mu(\beta)$ for each $t\in [0,1]$. These have to satisfy
\begin{itemize}
\setlength{\parsep}{0pt}
\setlength{\itemsep}{0pt}
\item[{\bf(a)}] For $\xi_1,\ldots,\xi_k \in \Omega^*(L)$, $\m^t_{k,\beta}(\xi_1,\ldots,\xi_k), \mc^t_{k,\beta}(\xi_1,\ldots,\xi_k) \in \Omega^*([0,1]\times L)$;
 \item[{\bf(b)}] For each fixed $t$, $(\Omega^*(L), \m^t_{k,\beta})$ is an unital filtered \Ai-algebra with unit $1$;
 \item[{\bf(c)}] $\m^t_{k,0}$ is independent of $t$, $\mc^t_{k,0}=0$ and $\mc^t_{k,\beta}(\ldots,1,\ldots)=0$;
 \item[{\bf(d)}] \begin{align}\label{pisotopy}
0=(-1&)^{n+1} \frac{d}{dt}\m^t_{k,\beta}(\xi_1,\ldots,\xi_k)\nonumber\\
&-\sum_{\substack{\beta_1+\beta_2=\beta \\0\leq j\leq k\\1\leq i\leq k-j+1}}\m^t_{k-j+1,\beta_1}(\xi_1,\ldots,\mc^t_{j,\beta_2}(\xi_{i},\ldots,\xi_{i+j-1}),\ldots,\xi_k)\\
&+\sum_{\substack{\beta_1+\beta_2=\beta \\0\leq j\leq k\\1\leq i\leq k-j+1}}(-1)^{\sum_{l=1}^{i-1} ||\xi_l||}\mc^t_{k-j+1,\beta_1}(\xi_1,\ldots,\m^t_{j,\beta_2}(\xi_{i},\ldots,\xi_{i+j-1}),\ldots,\xi_k).\nonumber
\end{align}
 \item[{\bf(e)}] For all $k$ and $\beta$, $\m^{(0)}_{k,\beta}=\m^0_{k,\beta}$ and $\m^{(1)}_{k,\beta}=\m^1_{k,\beta}$.
\end{itemize}
With the obvious modifications we can also define pseudoisotopy modulo $T^E$.
\end{defn}

Fukaya shows that pseudoisotopy and pseudoisotopy modulo $T^E$ are equivalence relations. For our purposes the most important property is the following

\begin{thm}[{\cite[Theorem 8.1]{Fuk}}]\label{extendingmk}
 Let $E_0 < E_1$ and suppose $\m^{0}$ and $\m^{1}$ are \Ai-algebras modulo $T^{E_0}$ and modulo $T^{E_1}$ respectively, on $\Omega^*(L)$. If $(\m^t_{k,\beta}, \mc^t_{k,\beta})$ is a pseudoisotopy modulo $T^{E_0}$ between them, then we can extend $\m^{0}$ to an \Ai-algebra modulo $T^{E_1}$  $\m^{(1)}$. Moreover we can extend $(\m^t_{k,\beta}, \mc^t_{k,\beta})$ to a pseudoisotopy modulo $T^{E_1}$ between $\m^{(1)}$ and $\m^{1}$. 
\end{thm}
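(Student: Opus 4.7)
The plan is to proceed by induction on the energy of $\beta$. Since $G(L)$ is a discrete submonoid, the set $\{\beta \in G(L) : E_0 < E(\beta) \leq E_1\}$ is finite; enumerate its elements in order of increasing energy. At each stage, the pseudoisotopy is assumed to have been extended to every strictly smaller energy, and we extend it to the next value of $\beta$, for all $k \geq 0$.

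The key observation is that equation (\ref{pisotopy}) becomes an ODE for $\m^t_{k,\beta}$ once the $\mc^t_{k',\beta}$ are fixed. Indeed, $\m^t_{k,\beta}$ itself cannot appear on the right-hand side of (\ref{pisotopy}): the only algebraic candidates are $(k{-}j{+}1,\beta_1)=(k,\beta)$ in the first sum and $(j,\beta_2)=(k,\beta)$ in the second, but both pair it with $\mc^t_{1,0}=0$. On the other hand, $\m^t_{k',\beta}$ for $k'<k$ and $\mc^t_{k',\beta}$ for all $k'$ do appear. We therefore choose the $\mc^t_{k',\beta}$ freely, subject only to the unit constraint $\mc^t_{k',\beta}(\ldots,1,\ldots)=0$ (for instance by taking $\mc^t_{\bullet,\beta}\equiv 0$), and integrate the resulting triangular system of ODEs in order of increasing $k$, backward in $t$ from the terminal condition $\m^t_{k,\beta}|_{t=1}=\m^1_{k,\beta}$. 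We then set $\m^{(1)}_{k,\beta}:=\m^t_{k,\beta}|_{t=0}$.

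The main obstacle is to verify that, for each fixed $t\in[0,1]$, the extended operations satisfy the $A_\infty$ relations modulo $T^{E_1}$. Introduce the $A_\infty$ error
$$P^t_{k,\beta}(\xi_1,\ldots,\xi_k):=\sum_{\substack{\beta_1+\beta_2=\beta \\ 0\leq j\leq k \\ 1\leq i\leq k-j+1}}(-1)^{\sum_{l<i}\|\xi_l\|}\,\m^t_{k-j+1,\beta_1}(\xi_1,\ldots,\m^t_{j,\beta_2}(\xi_i,\ldots,\xi_{i+j-1}),\ldots,\xi_k),$$
which vanishes at $t=1$ by hypothesis on $\m^1$. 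Differentiating $P^t_{k,\beta}$ in $t$, replacing each $\frac{d}{dt}\m^t$ using (\ref{pisotopy}), and invoking the inductive vanishing $P^t_{k'',\beta''}=0$ for $\beta''<\beta$ (as well as $\mc^t_{1,0}=0$ and $t$-independence of $\m^t_{\bullet,0}$), the resulting quadratic expression in $\m^t$ and $\mc^t$ should collapse into a linear expression in $\{P^t_{k',\beta}\}_{k'\leq k}$. This produces a linear first-order ODE system with zero terminal data at $t=1$, forcing $P^t_{k,\beta}\equiv 0$ on $[0,1]$. The hardest piece is the algebraic cancellation in this triple-sum rearrangement — a standard but intricate sign-bookkeeping exercise, directly analogous to the proof that $A_\infty$ relations are preserved under homotopy transfer. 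The unit, degree, and gapping conditions propagate by analogous linear-ODE arguments using the properties imposed on $\mc^t$.
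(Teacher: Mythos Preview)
Your approach is correct and is essentially the same as the paper's: the paper also sets $\mc^t_{k,\beta}=0$ for the new energy levels and integrates the pseudoisotopy equation backward from $t=1$, writing down explicitly the integral formula (\ref{pisotopyformula}) that solves your ODE, and then defers the verification that the $A_\infty$ relations hold to \cite{Fuk} (the argument you sketch with $P^t_{k,\beta}$). One minor overstatement: once $\mc^t_{\bullet,\beta}=0$, no $\m^t_{k',\beta}$ with $k'<k$ actually appears on the right-hand side (it would have to be paired with some $\mc^t_{\bullet,0}=0$), so the system is not merely triangular in $k$ but in fact decoupled --- each $\m^t_{k,\beta}$ is obtained by a single integration of known data.
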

\begin{proof}
Since pseudoisotopy is a transitive relation it is enough to consider the case when $E(G)\cap [E_0,E_1] =\{E_0,E_1\}$.

If $E(\beta)\leq E_0$, $\m^t_{k,\beta}$ and $\mc^t_{k,\beta}$ are already defined. Otherwise we define $\mc^t_{k,\beta}=0$ and
\begin{align}\label{pisotopyformula}
\m^\tau_{k,\beta}&(\xi_1,\ldots,\xi_k)=\m^1_{k,\beta}(\xi_1,\ldots,\xi_k)\nonumber\\
&+(-1)^n\sum_{\substack{\beta_1+\beta_2=\beta}}\int_\tau^1\m^t_{k-j+1,\beta_1}(\xi_1,\ldots,\mc^t_{j,\beta_2}(\xi_{i},\ldots,\xi_{i+j-1}),\ldots,\xi_k)\\
&+(-1)^{n+1}\sum_{\substack{\beta_1+\beta_2=\beta}}(-1)^{\sum_{l=1}^{i-1} ||\xi_l||}\int_\tau^1\mc^t_{k-j+1,\beta_1}(\xi_1,\ldots,\m^t_{j,\beta_2}(\xi_{i},\ldots,\xi_{i+j-1}),\ldots,\xi_k)\nonumber.
\end{align}
Recall that $\mc_{k,0}=0$, therefore the right hand side of the above equation is well defined. We can then check that this defines a pseudoisotopy, we refer the reader to \cite{Fuk} for this.
\end{proof}

Now we go back the case of the Fukaya algebra. First consider $\mM_{k+1}^I(\beta)=[0,1]\times \mM_{k+1}(\beta)$ and denote by
\begin{align}
\tilde{ev}_i:&\mM_{k+1}^I(\beta)\lto\mM_{k+1}(\beta)\stackrel{ev_i}{\lto} L\nonumber\\
ev_t:&\mM_{k+1}^I(\beta)\lto[0,1]\nonumber
\end{align}
the natural projections and by $ev^I_i$ the product $ev_t\times \tilde{ev}_i$. Products of spaces with Kuranishi structures have natural Kuranishi structures (see \cite[Appendix A1.2]{FOOO}) therefore the spaces $\mM^I_{k+1}(\beta)$ have Kuranishi structures. Moreover there are forgetful maps $\forg_j : \mM^I_{k+1}(\beta) \lto \mM^I_{k}(\beta)$.
As in \cite{Joy}, we can use $ev^I_0$ to decompose the boundary:
\begin{align}\label{boundaryparameter}
 \partial \mM_{k+1}^I(\beta)&=\partial ^+ \mM_{k+1}^I(\beta)\cup\partial ^- \mM_{k+1}^I(\beta)\nonumber\\
\partial ^+\mM_{k+1}^I(\beta)&=\bigcup_{\substack{\beta'+\beta''=\beta\\0\leq j\leq k}}(-1)^{n+1+i(j+1)}\mM_{j+1}^I(\beta')\ _{ev^I_0}\times_{ev^I_i}\mM_{k-j+2}^I(\beta'')\\
\partial ^-\mM_{k+1}^I(\beta)&=(-1)\mM_{k+1}^0(\beta)\cup\mM_{k+1}^1(\beta).\nonumber
\end{align}

For our purposes, the main use of the spaces $\mM^I_{k+1}(\beta)$ is to interpolate between different choices of systems of multisections. The following proposition is proved by Fukaya in Section 11 of \cite{Fuk}.
\begin{prop}\label{oneparametersection} Fix $\epsilon, E > 0$ and let $S_0$ and $S_1$ be two systems of multisections on $\mM_{k+1}(\beta)$ for $\omega(\beta)\leq E$ satisfying the conditions of Proposition \ref{compmulti}. Then there exist continuous families of multisections on $\mM_{k+1}^I(\beta)$ such that
\begin{itemize}
\setlength{\parsep}{0pt}
\setlength{\itemsep}{0pt}
\item[{\bf(a)}] $S\vert_{t=0}=S_0$ and $S\vert_{t=1}=S_1$;
\item[{\bf(b)}] they are transversal and $\epsilon$-small;
\item[{\bf(c)}] the multisections are compatible with the forgetful maps $\forg_j$;
\item[{\bf(d)}] $ev^I_0\vert_{S^{-1}(0)}:S^{-1}(0)\lto [0,1]\times L$ is a submersion;
\item[{\bf(e)}] they are compatible with the boundary decomposition \eq{boundaryparameter}.
\end{itemize}
\end{prop}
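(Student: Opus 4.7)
The plan is to construct the one-parameter family of multisections by induction on the pair $(E(\beta), k)$, following the scheme that Fukaya uses in the non-parametrized case of Proposition \ref{compmulti}. The central observation is that conditions (a) and (e), together with the inductive hypothesis, completely determine the multisection on the boundary $\partial\mM^I_{k+1}(\beta)$: on $\partial^-\mM^I_{k+1}(\beta)$ the multisection is prescribed to equal $S_0$ at $t=0$ and $S_1$ at $t=1$; on $\partial^+\mM^I_{k+1}(\beta)$ it must equal the fiber product of the already-constructed multisections on $\mM^I_{j+1}(\beta')$ and $\mM^I_{k-j+2}(\beta'')$. These are available because $E(\beta')+E(\beta'')=E(\beta)$, so that either both energies are strictly smaller than $E(\beta)$ or one of them vanishes, in which case the corresponding factor has strictly fewer marked points.

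Before starting the induction, I would first construct good coordinate systems on $\mM^I_{k+1}(\beta)$ that are compatible with the product structure $[0,1]\times\mM_{k+1}(\beta)$, with the forgetful maps $\forg_j$ (acting trivially on the $[0,1]$ factor), and with the boundary decomposition \eq{boundaryparameter}. At $t=0,1$ these can be arranged to restrict to the good coordinate systems underlying $S_0$ and $S_1$. The inductive step then reduces to extending a prescribed multisection on $\partial\mM^I_{k+1}(\beta)$ to the interior while preserving transversality, $\epsilon$-smallness, compatibility with $\forg_j$, and the submersivity of $ev^I_0$ on the perturbed zero set. The extension is performed chart by chart using a partition of unity subordinate to the good coordinate system: choose a collar of the boundary, extend the boundary data by a cut-off function, then add a generic transversal perturbation taken from the kernel of $d\,ev^I_0$. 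This last condition is permissible because $ev^I_0$ is weakly submersive as a product of the identity on $[0,1]$ with a weakly submersive map, and scaling the perturbation ensures $\epsilon$-smallness.

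The main obstacle, as in Fukaya's original argument, is to maintain compatibility with the forgetful maps simultaneously with the boundary and submersivity conditions. This is resolved by a nested induction on $k$: within each step of the induction on $\beta$, one first constructs the multisection on $\mM^I_k(\beta)$ and then lifts it to $\mM^I_{k+1}(\beta)$ via $\forg_j$, adding corrections only in directions transverse to the fibers of the forgetful map and supported away from components that would become unstable after forgetting the extra marked point. Because the product structure $[0,1]\times\mM_{k+1}(\beta)$ and the forgetful maps commute, the entire scheme of Section 11 of \cite{Fuk} carries over with only notational modifications, the extra parameter $t$ playing a purely passive role throughout.
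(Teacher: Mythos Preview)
Your proposal is essentially correct and aligned with the paper, which gives no independent proof of this proposition but simply cites Section 11 of \cite{Fuk}. Your sketch accurately reconstructs the inductive scheme on $E(\beta)$ and the relative extension argument, and you correctly observe that the $[0,1]$-parameter is passive.

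One point deserves correction. In Fukaya's argument (described in the paper's own \S\ref{proof3} for the analogous Proposition~\ref{compmultisection}), compatibility with the forgetful maps is \emph{not} achieved by a nested induction on $k$ with corrections added transverse to the fibers of $\forg_j$. Rather, one constructs the multisection once on $\mM^I_1(\beta)$ and then \emph{pulls it back} to $\mM^I_{k+1}(\beta)$ via the map $\forg:\mM^I_{k+1}(\beta)\to\mM^I_1(\beta)$ that forgets all marked points except $z_0$, with no further correction. Transversality and submersivity of $ev^I_0$ survive because $\forg$ is a stratumwise submersion and the pulled-back section satisfies the required exponential decay normal to each stratum (so that it remains smooth in the modified coordinates). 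Your phrase ``adding corrections only in directions transverse to the fibers of the forgetful map'' would, if taken literally, destroy condition~(c): any perturbation not pulled back from the base fails to satisfy $S_p = S_q\circ\varphi$. So the mechanism is pullback, not correction.
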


Using this system of multisections, for $(k,\beta)\neq (1,0)$ we define
$$\m^I_{k,\beta}(\xi_1,\ldots,\xi_k)=(-1)^{\sum_{l=1}^k(k-l)\vert\xi_u\vert + \frac{(k-1)(k-2)}{2}}\Corr\left(ev^I_1\times\ldots\times ev^I_k,\mM_{k+1}^I(\beta)^S,ev^I_0\right)\big( \ \vec \xi \ \big)$$ 
for $(k,\beta)\neq (1,0)$ and $\m^I_{1,0}=(-1)^{n}d$. As before using the second part of Proposition \ref{kurstokes} and Proposition \ref{composition} we can show
\begin{prop}\label{ainfinityI} 
$(\Omega^*([0,1]\times L), \m^I_{k,\beta})$ is a filtered unital $A_\infty$-algebra modulo $T^E$.
\end{prop}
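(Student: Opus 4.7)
The plan is to mimic the proof of Proposition \ref{aalgebra}, now applied to the parametrized moduli spaces $\mM_{k+1}^I(\beta)$. First I would apply the Stokes-type formula for smooth correspondences (Proposition \ref{kurstokes}) to
$$\Corr\!\left(ev^I_1\times\cdots\times ev^I_k,\ \mM_{k+1}^I(\beta)^S,\ ev^I_0\right)(\xi_1\times\cdots\times\xi_k),$$
viewed as a correspondence from $([0,1]\times L)^{\times k}$ to $[0,1]\times L$. Since $ev^I_0|_{S^{-1}(0)}$ is a submersion to $[0,1]\times L$ by Proposition \ref{oneparametersection}(d), and the ``horizontal'' boundary $\partial^-\mM^I_{k+1}(\beta)$ maps into the target boundary $\{0,1\}\times L$, the only piece of $\partial\mM^I_{k+1}(\beta)$ that contributes a genuine fiberwise boundary term in Stokes is $\partial^+\mM^I_{k+1}(\beta)$.

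Next, using the decomposition \eq{boundaryparameter} of $\partial^+\mM^I_{k+1}(\beta)$ into fiber products $\mM^I_{j+1}(\beta_1)^S\,{}_{ev^I_0}\!\times_{ev^I_i}\mM^I_{k-j+2}(\beta_2)^S$, together with the compatibility of the one-parameter multisections along this decomposition (Proposition \ref{oneparametersection}(e)), I would invoke the composition formula (Proposition \ref{composition}) to turn each fiber-product correspondence into a composition of the correspondences defining $\m^I_{j,\beta_2}$ and $\m^I_{k-j+1,\beta_1}$. Combining these with the sign conventions already baked into the definition of $\m^I_{k,\beta}$ yields the $A_\infty$-relations \eq{Ainfeq} for $\m^I$ by the same sign bookkeeping carried out in Proposition \ref{aalgebra}; the extra $(-1)^{n+1}$ prefactors that appear in \eq{boundaryparameter} as compared with \eq{boundary} are absorbed in shifting $\m^I_{1,0}=(-1)^{n}d$ by one relative to $\m_{1,0}=(-1)^{n+1}d$, reflecting the extra $[0,1]$-factor in the virtual dimension.

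For unitality with unit $1\in\Omega^0([0,1]\times L)$, the argument is formally identical to the non-parametrized case. The forgetful maps $\forg_j:\mM_{k+1}^I(\beta)\lto\mM_k^I(\beta)$ are compatible with the Kuranishi structures and with the multisection systems (Proposition \ref{oneparametersection}(c)); moreover each $ev^I_l$ with $l\ne j$ factors through $\forg_j$, and $\vdim\mM^I_{k+1}(\beta)>\vdim\mM^I_k(\beta)$. Proposition \ref{kurvanishingpi*} then gives $\m^I_{k,\beta}(\ldots,1,\ldots)=0$ for $(k,\beta)\neq(2,0)$, while $\m^I_{2,0}(1,\xi)=(-1)^{|\xi|}\m^I_{2,0}(\xi,1)=\xi$ follows directly, since the $\beta=0$, $k=2$ moduli space reduces to the diagonal.

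The one genuinely new point, and what I expect to be the principal obstacle, is verifying that $\partial^-\mM^I_{k+1}(\beta)$ really contributes nothing to the $A_\infty$-equation for $\m^I$. Because the target $[0,1]\times L$ itself has boundary, this is a slightly more delicate application of Stokes than appears in Proposition \ref{aalgebra}, and one has to check that the version of Proposition \ref{kurstokes} for fiber integration along a submersion onto a manifold-with-boundary absorbs the $\partial^-$-strata into the boundary of the target rather than producing free terms on the right-hand side. Once this is settled, the $\partial^-$ data record only the equalities $\m^I_{k,\beta}|_{t=0}=\m^0_{k,\beta}$ and $\m^I_{k,\beta}|_{t=1}=\m^1_{k,\beta}$, which belong to the pseudoisotopy discussion of Definition \ref{defisotopy} rather than to the present claim.
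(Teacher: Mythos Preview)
Your proposal is correct and follows exactly the approach the paper indicates: the paper's proof consists of the single remark that one uses the second part of Proposition \ref{kurstokes} (the version for targets with boundary, which is precisely your observation that $\partial^-\mM^I_{k+1}(\beta)$ contributes nothing) together with Proposition \ref{composition}. Your discussion of the sign shift coming from the extra $[0,1]$-factor and your unitality argument via $\forg_j$ and Proposition \ref{kurvanishingpi*} are also in line with how the paper handles the non-parametrized case.
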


By definition of $\Omega^*([0,1]\times L)$, there are forms $\rho(t),\sigma(t)\in\Omega^*(L)$ such that 
$$\m^I_{k,\beta}(\xi_1,\ldots,\xi_k)=\rho(t) + dt\wedge\sigma(t).$$
We define
\begin{align}\label{mkt}
 \m^t_{k,\beta}(\xi_1,\ldots,\xi_k)=(-1)^k \rho(t), \ \ \mc^t_{k,\beta}(\xi_1,\ldots,\xi_k)=(-1)^k\sigma(t).
\end{align}
\begin{prop}\label{isotopy}
 Denote by $\m^{(0)}$ and $\m^{(1)}$ the \Ai-algebra structures on $\Omega^*(L)$ obtained from using the systems of multisections $S_0$ and $S_1$. Then $(\m^t_{k,\beta}, \mc^t_{k,\beta})$ as in (\ref{mkt}) define a pseudoisotopy between $\m^{(0)}$ and $\m^{(1)}$.
\end{prop}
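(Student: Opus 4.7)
The plan is to derive the five conditions (a)--(e) of Definition \ref{defisotopy} by projecting the \Ai-structure on $(\Omega^*([0,1]\times L), \m^I_{k,\beta})$ provided by Proposition \ref{ainfinityI} onto its components along and transverse to $dt$. Every form $\eta\in\Omega^*([0,1]\times L)$ decomposes uniquely as $\eta=\rho(t)+dt\wedge\sigma(t)$ with $\rho(t),\sigma(t)\in\Omega^*(L)$, and the definition (\ref{mkt}) amounts, up to the overall sign $(-1)^k$, to reading off $\m^t$ and $\mc^t$ as these two components of $\m^I_{k,\beta}(\vec\xi)$ for inputs $\xi_i\in\Omega^*(L)$.

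Conditions (a) and (e) are essentially built into the construction: (a) is immediate from (\ref{mkt}), and (e) follows from Proposition \ref{oneparametersection}(a), which ensures that the system of multisections on $\mM^I_{k+1}(\beta)$ restricts at $t=0$ and $t=1$ to the two systems used to define $\m^{(0)}$ and $\m^{(1)}$; the sign conventions are designed so that after identifying $\Omega^*(\{t\}\times L)$ with $\Omega^*(L)$ the restrictions match. For condition (c), the $\beta=0$ operations come from $\mM^I_{k+1}(0)=[0,1]\times\mM_{k+1}(0)$, which is $t$-trivial, so the correspondence produces a $t$-independent form with no $dt$-component, yielding $\m^t_{k,0}$ independent of $t$ and $\mc^t_{k,0}=0$; the vanishing $\mc^t_{k,\beta}(\ldots,1,\ldots)=0$ (indeed $\m^I_{k,\beta}(\ldots,1,\ldots)=0$ for $(k,\beta)\neq(2,0)$) follows by applying Proposition \ref{kurvanishingpi*} to the forgetful map $\forg_i:\mM^I_{k+1}(\beta)\lto\mM^I_k(\beta)$ of Proposition \ref{oneparametersection}(c), exactly as in the unit verification for $\mF(L)$.

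For the substantive conditions (b) and (d), I would expand the \Ai-equation (\ref{Ainfeq}) for $\m^I_{k,\beta}$ applied to inputs $\xi_1,\ldots,\xi_k\in\Omega^*(L)$. Writing each inner operation as $\m^I_{j,\beta_2}(\xi_i,\ldots,\xi_{i+j-1})=\rho^{j,\beta_2}+dt\wedge\sigma^{j,\beta_2}$ and using multilinearity of $\m^I_{k-j+1,\beta_1}$, the full identity splits in $\Omega^*([0,1]\times L)$ into a $dt$-free identity and a $dt$-proportional identity. After the sign rescaling in (\ref{mkt}), the $dt$-free part becomes the \Ai-equation (\ref{gapAinf}) for $\m^t_{k,\beta}$ at each fixed $t$, yielding condition (b); unitality of $\m^t$ is then inherited from that of $\m^I$. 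The $dt$-component is precisely the pseudoisotopy equation (\ref{pisotopy}) of condition (d); crucially, the $d/dt$ term in (\ref{pisotopy}) comes from the $(j,\beta_2)=(1,0)$ contribution $\m^I_{1,0}(\m^I_{k,\beta}(\vec\xi))=(-1)^n d(\rho^{k,\beta}+dt\wedge\sigma^{k,\beta})$, whose $dt$-component is $(-1)^n dt\wedge(\partial_t\rho^{k,\beta}-d_L\sigma^{k,\beta})$, exactly producing the three terms on the right-hand side of (\ref{pisotopy}) after collecting.

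The main obstacle is the sign bookkeeping: one must reconcile the factor $(-1)^k$ in (\ref{mkt}), the internal signs of the correspondence maps (Propositions \ref{kurstokes} and \ref{composition}), the signs in (\ref{Ainfeq}) and (\ref{pisotopy}), and the additional signs generated whenever the odd form $dt$ is commuted past the inputs $\xi_1,\ldots,\xi_k$. This is closely analogous to the sign verification at the end of the proof of Proposition \ref{aalgebra}, but somewhat more intricate because the degree conventions differ for $\m^t$ and $\mc^t$: the operation $\mc^t_{k,\beta}$ has degree $1-k-\mu(\beta)$, one less than $\m^t_{k,\beta}$, reflecting the $dt$-factor it absorbs.
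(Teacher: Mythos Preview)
Your approach is essentially the same as the paper's: decompose the \Ai-equation for $\m^I$ into its $dt$-free and $dt$-proportional parts to obtain conditions (b) and (d). The paper makes explicit the key computational lemma you only allude to, namely that Proposition~\ref{proppi*}(b) yields
\[
\m^I_{k,\beta}(\xi_1,\ldots,dt\wedge\xi(t),\ldots,\xi_k)=(-1)^{\epsilon}\,dt\wedge\m^I_{k,\beta}(\xi_1,\ldots,\xi(t),\ldots,\xi_k),
\]
which is exactly what one needs to handle the outer operation applied to the $dt\wedge\sigma$ part of the inner one.

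There is one point where you are too casual. You treat condition (e) as ``essentially built into the construction'' via Proposition~\ref{oneparametersection}(a). The paper, by contrast, singles this out as one of the nontrivial verifications: knowing $S|_{t=0}=S_0$ tells you the multisections agree, but you still must compare a correspondence computed on $\mM^I_{k+1}(\beta)$ and then restricted to $t=0$ with a correspondence computed directly on $\mM_{k+1}(\beta)$. The paper does this by setting up the commutative square
\[
\xymatrix{ \mM_{k+1}(\beta) \ar[r]^{\tilde i_0} \ar[d]_{ev_i}& \mM_{k+1}^I(\beta)\ar[d]^{ev^I_i} \\
L\ar[r]^{i_0} & [0,1]\times L}
\]
and invoking the base-change identity $i_0^*\circ(ev^I_0)_*=(-1)^k(ev_0)_*\circ\tilde i_0^*$ (a small extension of Proposition~\ref{proppi*}(c)); the sign $(-1)^k$ here is precisely what matches the $(-1)^k$ in the definition~(\ref{mkt}). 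Your phrase ``the sign conventions are designed so that \ldots\ the restrictions match'' is gesturing at this, but the actual content is a base-change formula for fiber integration, not just a convention.
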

\begin{proof}
First note that Proposition \ref{proppi*}(b) implies that 
$$\m^I_{k,\beta}(\xi_1,\ldots,\xi_{i-1},dt\wedge\xi(t),\xi_{i+1}\ldots,\xi_k)=(-1)^\epsilon dt\wedge\m^I_{k,\beta}(\xi_1,\ldots,\xi_{i-1},\xi(t),\xi_{i+1}\ldots,\xi_k),$$
for $\epsilon=k-i+|\xi_1|+\ldots +|\xi_{i-1}|+k$. Decomposing the \Ai-equations for $\m^I_{k,\beta}$ into sums with and without a $dt$ factor and using this identity we can easily prove that $(\m^t_{k,\beta}$ is an \Ai-algebra for each $t$ and that Definition \ref{defisotopy}(d) holds.

The other nontrivial properties we have to check are that $\m^{(0)}_{k,\beta}=\m^0_{k,\beta}$ and $\m^{(1)}_{k,\beta}=\m^1_{k,\beta}$. The proofs of both statements are the same so we sketch just the first one. Consider the following diagram
\begin{displaymath}
\xymatrix{ \mM_{k+1}(\beta) \ar[r]^{\tilde i_0} \ar[d]_{ev_i}& \mM_{k+1}^I(\beta)\ar[d]^{ev^I_i} \\
L\ar[r]^{i_0} & [0,1]\times L,}
\end{displaymath}
where $i_0$ and $\tilde i_0$ are the inclusions at $t=0$. Observe that $$\m^0_{k,\beta}(\xi_1,\ldots,\xi_k)=(-1)^k i_0^*\m^I_{k,\beta}(\xi_1,\ldots,\xi_k).$$ After adding the contributions from each Kuranishi neighborhood and restricting to the zero set of the multisection, the claim is equivalent to
$$i_0^* \circ (ev^I_0)_*= (-1)^k (ev_0)_*\circ \tilde i_0^*.$$
This follows from a small generalization of Proposition \ref{proppi*}(c), that is proved in \cite[Section 9.1]{GuiSte}. The sign is a consequence of the fact that induced orientations on the fibers of $ev_0$ and $ev^I_0$ differs by $(-1)^k$.
This shows $\m^{(0)}_{k,\beta}=\m^0_{k,\beta}$.
\end{proof}

Consider an increasing sequence $\{E_i\}_{i=1}^{\infty}$ such that $\lim_{i\to\infty}E_i= +\infty$. By Proposition \ref{aalgebra}, for each $i$ we define $\m^{i}$, an $A_\infty$-algebra modulo $T^{E_i}$ on $\Omega^*(L)$. By Proposition \ref{isotopy} there is $\mc^i$ a pseudoisotopy modulo $T^{E_i}$ between $\m^i$ and $\m^{i+1}$. Now we construct an \Ai-algebra by inductively extending $\m^{1}$ to an \Ai-algebra modulo $T^{E_n}$. Suppose that we have already constructed $\m^{(n)}$ an \Ai-algebra modulo $T^{E_n}$ and $\mc^{(n)}$ a pseudoisotopy modulo $T^{E_n}$ between $\m^{(n)}$ and $\m^{n+1}$. Then using Proposition \ref{extendingmk}, we extend $\m^{(n)}$ to an \Ai-algebra modulo $T^{E_{n+1}}$, which we denote by $\m^{(n+1)}$ and extend $\mc^{(n)}$ to a pseudoisotopy modulo $T^{E_{n+1}}$ between $\m^{(n+1)}$ and $\m^{n+1}$. We denote by $\mc^{(n+1)}$ the composition of this pseudoisotopy with $\mc^{n+1}$. Therefore $\mc^{(n+1)}$ is a pseudoisotopy modulo $T^{E_{n+1}}$ between  $\m^{(n+1)}$ and $\m^{n+2}$, 
which concludes the 
induction step. Finally we obtain an \Ai-algebra
$$(\Omega^*(L), \m^{(\infty)}),$$
which we define to be $\mF(L)$, the Fukaya algebra of $L$. We have the following

\begin{thm}[{\cite[Theorem 14.2]{Fuk}}] 
The unital filtered $A_\infty$-algebra $\mF(L)$ constructed above is independent of the choices of systems of multisections and almost complex structure up to filtered $A_\infty$-quasi-isomorphism.
\end{thm}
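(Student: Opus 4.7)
The plan is to reduce the independence statement to two facts: (i) any two choices (of multisections or of almost complex structure) can be connected by a pseudoisotopy between the resulting filtered \Ai-algebras; and (ii) a pseudoisotopy between filtered \Ai-algebras induces a filtered \Ai-quasi-isomorphism. Fact (ii) can be invoked from \cite{Fuk}; what I need to construct is the pseudoisotopy in each case.

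For independence of the system of multisections, Proposition \ref{isotopy} already produces a pseudoisotopy modulo $T^E$ between $\m^{(0)}$ and $\m^{(1)}$, for any fixed $E>0$, using the parameterized moduli spaces $\mM^I_{k+1}(\beta) = [0,1]\times\mM_{k+1}(\beta)$. Applying the inductive upgrading procedure at the end of \S \ref{modte} (which only used Theorem \ref{extendingmk} to propagate modulo-$T^{E_i}$ pseudoisotopies through an exhaustion $E_i\to\infty$) upgrades this to a pseudoisotopy between the full \Ai-algebras $\mF(L)$ built from the two different choices. This is a purely formal repetition of the construction already carried out.

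For independence of the almost complex structure $J$, I would replace $[0,1]\times \mM_{k+1}(\beta)$ by a genuinely parameterized moduli space. Fix a smooth path $\{J_t\}_{t\in[0,1]}$ of compatible almost complex structures joining $J_0$ and $J_1$, and consider
\[
\mM_{k+1}(\beta;\{J_t\}) = \bigl\{(t,u) : t\in[0,1],\ u\in\mM_{k+1}(\beta;J_t)\bigr\}.
\]
This carries a Kuranishi structure with corners, with evaluation maps $\tilde{ev}_i$ to $L$, a parameter map $ev_t$ to $[0,1]$, and forgetful maps $\forg_j$, such that the boundary decomposes as in \eq{boundaryparameter}: the $\partial^-$ part consists of the fibers at $t=0,1$, and the $\partial^+$ part consists of fiber products along $ev_0^I$ over $[0,1]\times L$. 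The analogues of Propositions \ref{compKur} and \ref{compmulti} (and their one-parameter refinement Proposition \ref{oneparametersection}) then give compatible good coordinate systems and continuous families of multisections $S$ on these spaces, agreeing with chosen multisections for $J_0$ at $t=0$ and for $J_1$ at $t=1$, compatible with forgetful maps and with the boundary decomposition, with $ev^I_0|_{S^{-1}(0)}$ submersive. Defining $\m^I_{k,\beta}$ by the smooth correspondence map as in Proposition \ref{ainfinityI}, and extracting $(\m^t_{k,\beta},\mc^t_{k,\beta})$ via the decomposition $\m^I_{k,\beta}(\vec\xi)=\rho(t)+dt\wedge\sigma(t)$ as in \eq{mkt}, Proposition \ref{kurstokes} combined with Proposition \ref{proppi*}(b) and the analogue of the argument in Proposition \ref{isotopy} shows that $(\m^t_{k,\beta},\mc^t_{k,\beta})$ is a pseudoisotopy modulo $T^E$ between the two \Ai-algebras. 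Iterating the modulo-$T^{E_i}$ extension argument of \S \ref{modte} again upgrades this to a pseudoisotopy between the two full filtered \Ai-algebras.

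The main obstacle is the construction of the Kuranishi-theoretic ingredients on $\mM_{k+1}(\beta;\{J_t\})$: one needs the parameterized moduli spaces to carry Kuranishi structures with corners, compatible with forgetful maps and with the $\partial^+$ fiber-product decomposition along $ev^I_0$, and one needs continuous families of transversal multisections with the same compatibility and with $ev^I_0|_{S^{-1}(0)}$ submersive. These are exact parameterized analogues of Propositions \ref{compKur} and \ref{compmulti}, and while they are technically demanding, they are proved by the same methods used by Fukaya in \cite{Fuk} in the $J$-constant case. Once these are in hand, the sign computation and the identification $\m^{(i)}_{k,\beta}=\m^i_{k,\beta}$ for $i=0,1$ proceed verbatim as in Proposition \ref{isotopy}, using the generalization of Proposition \ref{proppi*}(c) on restriction of fiber integration to $t=0,1$. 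Combining the two pseudoisotopies (multisections and almost complex structure) and invoking fact (ii) completes the proof.
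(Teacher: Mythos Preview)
The paper does not give its own proof of this theorem: it is stated as a citation of \cite[Theorem 14.2]{Fuk} and left unproved. So there is no paper-side argument to compare against. That said, your proposal is exactly in line with what the paper itself hints at --- at the start of \S\ref{modte} it remarks that ``this argument can be easily adapted to prove that $\mF(L)$ is independent of the almost complex structure and the choices of systems of multisections up to quasi-isomorphism,'' referring to the pseudoisotopy machinery developed there. Your reduction to (i) constructing a pseudoisotopy and (ii) invoking that pseudoisotopic \Ai-algebras are quasi-isomorphic is the intended strategy, and your use of Proposition~\ref{isotopy} for the multisection case and of a parameterized moduli space $\mM_{k+1}(\beta;\{J_t\})$ for the almost complex structure case is the standard route taken in \cite{Fuk}.

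One small point worth flagging: your upgrading argument from pseudoisotopy modulo $T^{E_i}$ to a full pseudoisotopy is not quite a ``purely formal repetition'' of the construction of $\mF(L)$ at the end of \S\ref{modte}. That construction extends a single \Ai-algebra modulo $T^{E_0}$ by chaining together pseudoisotopies to successive $\m^i$'s; to compare two different limits $\mF(L)$ built from two different sequences of choices, you must run this extension for both sequences and show that the resulting pseudoisotopies between them also extend compatibly (using transitivity of pseudoisotopy and Theorem~\ref{extendingmk} applied to the pseudoisotopies themselves). This is indeed done in \cite{Fuk}, but it is a separate inductive argument, not literally the same one.
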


\section{$\mathcal{F}(L_1 \times L_2)$ modulo $T^E$}\label{secfukprod}

In this section we prove the modulo $T^E$ version of Theorem \ref{intcomsub}. As we saw in the previous section, $\mF(L)$ is constructed as a limit of \Ai-algebras modulo $T^E$, thus we will show that $\mF(L_1)$ and $\mF(L_2)$ are commuting subalgebras modulo $T^E$ of $\mF(L_1 \times L_2)$. We will prove this assuming the existence of some particular Kuranishi structures and systems of multisections on the moduli spaces $\mM_{k+1}(\beta_1 \times \beta_2)$. The proof of the existence of such Kuranishi structures and multisections will be given in \S \ref{proofs}. 

For $i=1,2$ consider $2n_i$-dimensional symplectic manifolds $(M_i,\omega_i)$ and compact Lagrangians $L_i\subseteq M_i$ with (relative) spin structures $\sigma_i$. Then the manifold $M=M_1\times M_2 $ has a symplectic form $\omega=p_1^*\omega_1+p_2^*\omega_2$, where $p_1:M_1\times M_2\lto M_1$ and $p_2:M_1\times M_2\lto M_2$ are the obvious projections, also $L=L_1\times L_2\subseteq M_1\times M_2$ is a Lagrangian. Note that $L$ is naturally oriented and it is relatively spin since $$w_2(TL)=w_2(T L_1)+w_2(TL_2)=i^*(\sigma_1)+i^*(\sigma_2)=i^*(\sigma_1+\sigma_2).$$
Additionally, $\sigma_1$ and $\sigma_2$ determine a choice $\sigma$ of relative spin structure on $L$.

We fix  almost complex structures $J_1$ and $J_2$ compatible with $\omega_1$ and $\omega_2$ respectively. Then $J=J_1\times J_2$ is an almost complex structure on $M$ compatible with $\omega$. Let $\Sigma$ be a bordered Riemann surface and consider a map $u:(\Sigma,\partial \Sigma)\lto (M_1\times M_2, L_1\times L_2)$. Denote by $u_1=p_1\circ u:(\Sigma,\partial \Sigma)\lto (M_1, L_1)$ and $u_2=p_2\circ u:(\Sigma,\partial \Sigma)\lto (M_2, L_2)$ the projections of $u$. We can easily check that
\begin{align}\label{partialbarJu}
\overline\partial_J u=\overline\partial_{J_1} u_1\times\overline\partial_{J_2} u_2.
\end{align}
Now, $\pi_2(M_1\times M_2, L_1\times L_2)=\pi_2(M_1, L_1)\oplus\pi_2(M_2,L_2)$ therefore we can rewrite $\beta\in\pi_2(M_1\times M_2, L_1\times L_2)$ as $\beta=\beta_1\times\beta_2$. From (\ref{partialbarJu}) we conclude that $\mM_1(\beta)\neq\emptyset$ if and only if $\mM_1(\beta_1)$ and $\mM_1(\beta_2)$ are non-empty. 

An easy computation shows that 
$$E(\beta_1\times\beta_2)=E(\beta_1)+E(\beta_2) \ \text{and} \  \mu(\beta_1\times\beta_2)=\mu(\beta_1)+\mu(\beta_2).$$ 
So we conclude that $G(L_1 \times L_2)=G(L_1) + G(L_2)$. We will use the following notation for the \Ai-operations on $\mF(L_1\times L_2)$. For each $\beta \in G(L_1 \times L_2)$ we take the decomposition
$$\m_{k,\beta}=\sum_{\substack{\beta_1\times\beta_2\in G(L_1)\times G(L_2)\\\beta_1+\beta_2=\beta}}\m_{k,\beta_1\times\beta_2},$$
where $\m_{k,\beta_1\times\beta_2}$ is defined using the moduli spaces $\mathcal{M}_{k+1}(\beta_1\times\beta_2)$.

Now we will define the maps between various moduli spaces, that we will use to prove all the vanishing conditions in the definitions of commuting subalgebras.
From the above discussion there are continuous maps 
$$\Pi_1:\mM_{k+1}(\beta_1\times\beta_2)\lto \mM_{k+1}(\beta_1) \ \ \text{and} \ \ \Pi_2:\mM_{k+1}(\beta_1\times\beta_2)\lto \mM_{k+1}(\beta_2)$$ 
given by $\Pi((\Sigma,\vec z),u)=((\Sigma_i,\vec z), u_i)$ where $\Sigma_i$ is obtained from $\Sigma$ by collapsing irreducible components that become unstable after forgetting the other component of $u$.
\begin{defn}\label{projections}
 Consider a decomposition of $\{ 1,\ldots, k\}$ into two disjoint sets $J$ and $L$. Let $\forg_J :\mM_{k+1}(\beta) \lto \mM_{k-|J|+1}(\beta)$ be the map that forgets the marked points $z_j$ for $j \in J$. We define
$$P_{J,L}: \mM_{k+1}(\beta_1\times\beta_2) \lto \mM_{k-|L|+1}(\beta_1) \times \mM_{k-|J|+1}(\beta_2),$$
as the composition $(\forg_J \times \forg_L)\circ(\Pi_1 \times \Pi_2)$.

Similarly, given a decomposition $\{ 1,\ldots, k+1\} \setminus \{i+1\}= J \sqcup L$ we define
$$Q_{i+1,J,L}: \mM_{k+2}(\beta_1\times\beta_2) \lto \mM_{k-|L|+2}(\beta_1) \times \mM_{k-|J|+2}(\beta_2),$$
as the composition $(\forg_J \times \forg_L)\circ(\Pi_1 \times \Pi_2)$.
\end{defn}

In order to distinguish the different evaluations maps we introduce the following notation. We denote by $Ev_i:\mM_{k+1}(\beta_1\times\beta_2)\lto L_1\times L_2$, $ev_i^1:\mM_{k+1}(\beta_1)\lto L_1$ and $ev_i^2:\mM_{k+1}(\beta_2)\lto L_2$ the usual evaluation maps. With this notation we have
$Ev_0= (ev_0^1 \times ev_0^2)\circ P_{J,L}$ and $Ev_l= (ev_l^1 \times ev_l^2)\circ Q_{i+1,J,L}$, for $l=0,i+1$.

\begin{prop}\label{compkuranishi}
 There are Kuranishi structures on $\mM_{k}(\beta_1 \times \beta_2)$, $\mM_{k}(\beta_l)$, $(l=1,2)$ and $\mM_{p}(\beta_1)\times\mM_{q}(\beta_2)$ for $p,q,k \geq 1$ satisfying
\begin{itemize}
\setlength{\parsep}{0pt}
\setlength{\itemsep}{0pt}
\item[{\bf(a)}] they are compatible, in the sense of Definition \ref{kurcompatible}, with $P_{J,L}$, $Q_{i+1,J,L}$ and $\forg_j$;
 \item[{\bf(b)}] the maps $Ev_0$, $ev_0^1$ and $ev_0^2$ are weak submersions;
 \item[{\bf(c)}] the decomposition of the boundary \eq{boundary} holds as spaces with oriented Kuranishi structures.
\end{itemize}
\end{prop}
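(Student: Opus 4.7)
The plan is to adapt Fukaya's construction of compatible Kuranishi structures (Proposition \ref{compKur}) to the product setting, by building obstruction bundles on $\mM_{k+1}(\beta_1\times\beta_2)$ that split compatibly with the projections $\Pi_1$ and $\Pi_2$, in the same spirit as Fukaya's treatment of forgetful maps.

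First, I would apply Proposition \ref{compKur} to obtain Kuranishi structures on $\mM_{k+1}(\beta_l)$ ($l=1,2$) that are compatible with the forgetful maps $\forg_j$, have $ev_0^l$ weakly submersive, and satisfy the boundary decomposition (\ref{boundary}). The product Kuranishi structures on $\mM_p(\beta_1)\times \mM_q(\beta_2)$ arise by direct sum of obstruction bundles, and all properties (a)--(c) restricted to these product spaces are immediate, since both the forgetful maps and the projections $P_{J,L}$, $Q_{i+1,J,L}$ restricted here descend factorwise.

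The core of the argument is the construction of Kuranishi charts on $\mM_{k+1}(\beta_1\times\beta_2)$ that project compatibly. Given a stable map $[(\Sigma,\vec z,u)]$ with projections $[(\Sigma_l,\vec z_l,u_l)]=\Pi_l([u])$, I would define the obstruction space at $[u]$ as $E_{[u]} := \Pi_1^{*}E_{[u_1]} \oplus \Pi_2^{*}E_{[u_2]}$, where $E_{[u_l]}$ is the obstruction space already chosen on the factor. Since $\bar\partial_J u = \bar\partial_{J_1} u_1 \times \bar\partial_{J_2} u_2$ by (\ref{partialbarJu}), and first-order deformations of $u$ split as independent deformations of each $u_l$ (modulo reparametrizations of components that are unstable for one projection), the Kuranishi neighborhoods obtained by solving $\bar\partial v \in E_{[u]}$ can be organized so that each $\Pi_l$ satisfies Definition \ref{kurcompatible}. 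The subtle point is handling components of $\Sigma$ that are unstable for exactly one projection: one argues as Fukaya does for forgetful maps, supporting obstruction bundles away from such ghost components and pulling back from the stabilized factor. Compatibility with $P_{J,L}$ and $Q_{i+1,J,L}$ then follows by combining this product compatibility with the forgetful-map compatibility already built into the factor Kuranishi structures.

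Property (b), that $Ev_0$ is a weak submersion, reduces to the corresponding property for $ev_0^l$: because the obstruction bundle splits, deformations of $u$ can be chosen independently in each factor, so the differential of $Ev_0 = (ev_0^1\circ\Pi_1,ev_0^2\circ\Pi_2)$ surjects onto $TL_1\oplus TL_2$. Property (c), the boundary decomposition, follows by induction on energy once one checks that the strata of $\mM_{k+1}(\beta_1\times\beta_2)$ fiber-product factorwise, which is consistent with the product obstruction bundle. I expect the main technical obstacle to be arranging the obstruction spaces on the charts so that they are simultaneously compatible with both projections $\Pi_l$ \emph{and} with every forgetful map $\forg_j$, uniformly across all nodal strata; this requires an induction on $(E(\beta_1)+E(\beta_2),k)$ with finite-dimensional choices that behave well under both stabilization and factor projection, and it is this coordination, rather than any single piece of the construction, that will occupy most of the work in \S \ref{proofs}.
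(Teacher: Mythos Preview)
Your outline captures the right local picture---defining the obstruction at $[u]$ as $E_{[u_1]}\oplus E_{[u_2]}$ and checking that $Ev_0$ splits---but there is a genuine gap in your treatment of $\mM_p(\beta_1)\times\mM_q(\beta_2)$. You assert that the product Kuranishi structure (direct sum of obstruction bundles from the two factors) works here and that compatibility with $P_{J,L}$, $Q_{i+1,J,L}$ is then ``immediate.'' This is precisely what fails, and the paper singles it out in Remark~4.4. The reason is the stabilization step: to build a chart at $[(\Sigma,u)]\in\mM_{k+1}(\beta_1\times\beta_2)$ one must add interior marked points $\nu(\Sigma^{1,+})\cup\nu(\Sigma^{2,+})$ coming from \emph{both} projections, so the chart lives over a moduli space $\mM_{l,k+1}$ with $l=l_1+l_2$ interior points. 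If the target $\mM_p(\beta_1)\times\mM_q(\beta_2)$ carries the naive product structure, its charts only see $l_1$ (resp.\ $l_2$) interior points in each factor; pushing the map $P_{J,L}$ through the charts then requires forgetting \emph{interior} marked points, and Kuranishi structures compatible with interior forgetful maps are not available with the gluing analysis of \cite{FOOO}.

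The paper's remedy is to abandon the product Kuranishi structure on $\mM_p(\beta_1)\times\mM_q(\beta_2)$ altogether. Instead it first constructs structures on $\mM_1(\beta_1)\times\mM_1(\beta_2)$ whose charts are locally products but whose stabilization data $\nu(\Sigma^{1,+})$, $\nu(\Sigma^{2,+})$ are chosen \emph{jointly} (in particular disjoint as subsets of $\Sigma$, and with transversals $N_w^1\times M_2$ or $M_1\times N_w^2$), so that the same interior marked points appear on both sides of $\Pi_1\times\Pi_2$. Only after this is done does one pull back, via the boundary forgetful maps, to $\mM_{k+1}(\beta_1\times\beta_2)$ and to $\mM_p(\beta_1)\times\mM_q(\beta_2)$. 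Thus the order of operations is the reverse of what you propose: one does not start from Proposition~\ref{compKur} on each factor and take products, but rather builds the base case $\mM_1(\beta_1)\times\mM_1(\beta_2)$ with product-aware stabilization, and then induces everything else by boundary forgetful maps. Your final paragraph correctly anticipates that the coordination is the hard part, but the specific coordination needed---a non-product global structure on $\mM_p(\beta_1)\times\mM_q(\beta_2)$---is not visible in your plan.
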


\begin{prop}\label{b2=0} For the Kuranishi structures constructed in Proposition \ref{compkuranishi}, we have:
 $$\mM_{k+1}(\beta_1\times 0)=(-1)^{n_2k}\mM_{k+1}(\beta_1)\times L_2,$$
$$\mM_{k+1}(0\times\beta_2)=L_1\times\mM_{k+2}(\beta_2),$$ 
as spaces with oriented Kuranishi structures.
\end{prop}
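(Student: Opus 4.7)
The strategy is standard for the behavior of Kuranishi structures on disk moduli under vanishing of one class factor: I would establish a set-theoretic identification, verify that the Kuranishi structures agree under this identification, and compare orientations.

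First, for $[(\Sigma,\vec z),u] \in \mM_{k+1}(\beta_1 \times 0)$, set $u_2 = p_2 \circ u$. Since $\beta_2 = 0$ and $u_2$ is $J_2$-holomorphic with $\int u_2^* \omega_2 = 0$, the map $u_2$ is constant on each irreducible component, and a connectedness argument (plus the fact that $u_2$ takes values in $L_2$ on the boundary circles and is $J_2$-holomorphic on the interior) forces $u_2 \equiv q$ for some $q\in L_2$. Consequently, no irreducible component of $\Sigma$ becomes unstable when one forgets $u_2$, so $\Pi_1$ is an isomorphism on domains. The assignment $[(\Sigma,\vec z),u] \mapsto \big([(\Sigma,\vec z),u_1],\,q\big)$ is then a bijection onto $\mM_{k+1}(\beta_1) \times L_2$, with obvious continuous inverse. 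The analogous argument with the roles of the factors reversed gives the set-theoretic identity underlying the second equation.

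Second, I would identify the Kuranishi structures. At $[u_1,\mathrm{const}_q]$, the linearization $D_u\overline\partial_J$ on $u^*(TM_1\oplus TM_2)$ with boundary values in $T(L_1\times L_2)$ splits as a direct sum of $D_{u_1}\overline\partial_{J_1}$ and the standard $\overline\partial$-operator on the trivial bundle $\Sigma \times T_q M_2$ with boundary condition $T_q L_2$. The latter is surjective with kernel $T_q L_2$. Hence the obstruction bundle, the thickened moduli space, and the Kuranishi neighborhood all split as a product, and by the design of Proposition \ref{compkuranishi} the constructed chart is the product of the Kuranishi chart at $[u_1]\in\mM_{k+1}(\beta_1)$ with the trivial chart on $L_2$.

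Third, I would compare orientations. In the convention of \cite[Chapter 8]{FOOO}, refined as in \cite{AkaJoy}, the orientation on $\mM_{k+1}(\beta)$ is built from $\det(D_u\overline\partial_J)$ (pinned down by the relative spin structure) tensored with the one-dimensional tangent directions coming from the free boundary marked points $z_1,\dots,z_k$. For $\beta=\beta_1\times 0$ the determinant factor splits, with $T_qL_2$ appearing as an $n_2$-dimensional summand inside the spin orientation; to rewrite the resulting orientation as the product of the orientation on $\mM_{k+1}(\beta_1)$ with that on $L_2$, the $T_qL_2$ factor must be moved past the $k$ one-dimensional marked-point directions, producing $(-1)^{n_2 k}$. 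In the second identity, the $T L_1$ factor already sits on the left of the marked-point directions of $\mM_{k+1}(\beta_2)$, so no sign arises. The main obstacle here is tracking the orientation conventions carefully; the rest reduces to the standard fact that pseudoholomorphic disks of class $0$ are constant.
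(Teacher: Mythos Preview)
Your proposal follows essentially the same three-step strategy as the paper: set-theoretic identification via constancy of the trivial factor, splitting of the linearized operator and hence of the Kuranishi charts, and comparison of orientations. The first two steps match the paper, which simply notes they follow from the construction in Proposition~\ref{compkuranishi}.

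The orientation step is where the paper is more explicit, and your heuristic, while arriving at the right sign, does not quite match the convention you cite. In the FOOO convention (as recalled in the paper), the orientation on $\mM_{k+1}(\beta)$ is fixed by
\[
\mM_{k+1}(\beta)\times Lie(G)=\tilde\mM(\beta)\times (S^1)^{k+1},\qquad G=PSL_2(\mathbb{R}),
\]
so \emph{all} $k+1$ boundary marked points contribute, and one then quotients by the $3$-dimensional automorphism group. The paper's computation moves the $L_2$ factor past $(S^1)^{k+1}$, picking up $(-1)^{n_2(k+1)}$, and then past $Lie(G)$, picking up $(-1)^{3n_2}=(-1)^{n_2}$; the product is $(-1)^{n_2 k}$. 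Your phrasing ``moved past the $k$ one-dimensional marked-point directions $z_1,\dots,z_k$'' skips $z_0$ and the $G$-quotient and happens to land on the same sign, but is not a derivation in the stated convention. For the second identity the paper also invokes the product-orientation Convention~8.2.1 of \cite{FOOO} to handle the cokernel contribution, which your argument does not mention. These are bookkeeping points rather than gaps in the idea, but since (as you yourself note) the content of the proposition is precisely the sign, you should carry out the computation explicitly in the convention you invoke.
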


\begin{prop}\label{compmultisection}
 Fix $\epsilon, E>0$, then there exist good coordinate systems and transversal, $\epsilon$-small continuous families of multisections $(W_\alpha, S_\alpha)$ on $\mM_{k}(\beta_1 \times \beta_2)$, $(W_{\alpha_l}, S_{\alpha_l})$ on $\mM_{k}(\beta_1)$, ($l=1,2$) and $(W_{\gamma}, S_{\gamma})$ on $\mM_{p}(\beta_1)\times\mM_{q}(\beta_2)$ for $p,q,k \geq 1$ and $E(\beta_1)+E(\beta_2) \leq E$ satisfying
\begin{itemize}
\setlength{\parsep}{0pt}
\setlength{\itemsep}{0pt}
\item[{\bf(a)}] they are compatible, in the sense of Definition \ref{corrcompatible} with $P_{J,L}$, $Q_{i+1,J,L}$ and $\forg_j$;
 \item[{\bf(b)}] the maps $(Ev_0)_\alpha\vert_{S^{-1}_\alpha(0)}$, $(ev_0^l)_{\alpha_l}\vert_{S^{-1}_{\alpha_l}(0)}$, $(l=1,2)$ and $(ev_0^1\times ev_0^2)_\gamma\vert_{S^{-1}_{\gamma}(0)}$ are submersions;
 \item[{\bf(c)}] the multisections are compatible with the boundary decomposition \eq{boundary}.
\end{itemize}
\end{prop}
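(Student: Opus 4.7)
The plan is to adapt Fukaya's construction in \cite[Section 5]{Fuk} to the product setting, proceeding by a simultaneous induction on $(E(\beta_1)+E(\beta_2), k)$ ordered lexicographically, and building the three families of multisections together so that they satisfy (a)--(c) at each stage.

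First I would construct the factor multisections on $\mM_k(\beta_l)$ for $l=1,2$ by a direct application of Proposition \ref{compmulti}, obtaining transversal, $\epsilon$-small multisections compatible with the forgetful maps and such that $(ev_0^l)|_{S^{-1}(0)}$ is a submersion. Then, for the product spaces $\mM_p(\beta_1)\times\mM_q(\beta_2)$, I would exploit the description of their Kuranishi structure via the interior-marked-point charts $\mM_{p,l_1}(\beta_1)\times\mM_{q,l_2}(\beta_2)$ indicated in the remark after Proposition \ref{compkuranishi}. Pulling back the two factor multisections under the corresponding projections gives a transversal multisection on the product chart that is automatically $\forg_j$-compatible and makes $ev_0^1\times ev_0^2$ submersive on the zero set.

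For $\mM_{k+1}(\beta_1\times\beta_2)$ itself, the compatibility of Kuranishi structures with $P_{J,L}$ and $Q_{i+1,J,L}$ from Proposition \ref{compkuranishi}(a) provides, via Definition \ref{kurcompatible}(c), a fiberwise isomorphism of obstruction bundles, so transversality and submersion conditions transfer under pullback. I would define the multisection on $\mM_{k+1}(\beta_1\times\beta_2)$ as the pullback via $P_{\emptyset,\emptyset}$ from the product space $\mM_{k+1}(\beta_1)\times\mM_{k+1}(\beta_2)$. Transversality, $\epsilon$-smallness and the submersion of $Ev_0$ then follow at once from the product statement, and compatibility with every other $P_{J,L}$, $Q_{i+1,J,L}$ and $\forg_j$ is a consequence of combining this pullback definition with the commuting diagrams $P_{J,L}=(\forg_J\times\forg_L)\circ P_{\emptyset,\emptyset}$ together with the $\forg_j$-compatibility already built into the factor multisections.

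The hard part will be ensuring (c), compatibility with the boundary decomposition \eq{boundary}, simultaneously with (a). On a boundary stratum $\mM_{j+1}(\beta')\times_L\mM_{k-j+2}(\beta'')$ the multisection is already prescribed by the inductive hypothesis as the fiber product of lower-energy multisections, and Proposition \ref{compkuranishi}(a) guarantees this prescription is intertwined by the relevant projections. The remaining task is to extend the boundary-prescribed multisection to the interior while preserving the pullback relation under $P_{\emptyset,\emptyset}$. My strategy is to carry out the extension first on $\mM_{k+1}(\beta_1)\times\mM_{k+1}(\beta_2)$ using a parametric version of Fukaya's chart-by-chart inductive extension procedure, and then to pull back; since $P_{\emptyset,\emptyset}$ identifies obstruction bundles fiberwise, an admissible extension upstairs automatically yields one downstairs, simultaneously preserving transversality, $\epsilon$-smallness, and the submersion of $Ev_0$.
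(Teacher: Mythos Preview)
Your overall strategy is the same as the paper's: build multisections on the factors, take products chart-by-chart on $\mM_p(\beta_1)\times\mM_q(\beta_2)$, and pull back along $\Pi_1\times\Pi_2$ (your $P_{\emptyset,\emptyset}$) to $\mM_{k+1}(\beta_1\times\beta_2)$, with the boundary handled by induction on energy. The paper organizes this slightly differently---it first does everything at the level of $\mM_1$ and then pulls back along the forgetful maps to reach arbitrary $k$---but this is a cosmetic difference.

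There is, however, a genuine technical point you have not addressed. You assert that ``transversality and submersion conditions transfer under pullback'' and that an extension on the product space ``automatically yields one downstairs.'' This is not automatic, because the chart maps $\varphi_\Pi$ induced by $\Pi_1\times\Pi_2$ (and likewise those induced by $\forg_j$) are \emph{not smooth}: they are only continuous, and smooth submersions when restricted to each combinatorial stratum. Hence $S_p:=S_q\circ\varphi_\Pi$ need not be smooth a priori, and without smoothness you cannot speak of transversality of each branch or of $Ev_0|_{S_p^{-1}(0)}$ being a submersion. The paper deals with this by first constructing the multisections on the lower-dimensional strata (via the energy induction) and then extending to a neighborhood with an exponential decay condition in the coordinate $T=1/y$ normal to the stratum:
\[
\Big\vert \frac{\partial^{k+l}S_q}{\partial T^k\,\partial x^l}\Big\vert < C e^{-cT}.
\]
Since $\varphi_\Pi$ is locally modeled on the forgetful map $\mM_{l,1}\to\mM_{l_1,1}\times\mM_{l_2,1}$, in these coordinates it is either a submersion or of the form $T=T_1+T_2$, and the decay condition is preserved under composition; this is what guarantees $S_p$ is smooth and that transversality and the submersion property survive the pullback. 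Your proposal needs this ingredient (or an equivalent one) to close.
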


We will postpone the proofs of these three propositions to the next section. Using the systems of multisections provided by these propositions we will show that $\mF(L_1)$ and $\mF(L_2)$ are commuting subalgebras modulo $T^E$ of $\mF(L_1 \times L_2)$. We start with the following 

\begin{prop}\label{b1b2nonzero}
 Consider $\beta_1 \in G(L_1)$ and $\beta_2 \in G(L_2)$ both non-zero satisfying $E(\beta_1)+E(\beta_2) \leq E$. Let $\xi$ and $\xi_1,\ldots, \xi_k \in \Omega^*(L_1\times L_2)$ be differential forms such that for each $i$, $\xi_i=p_1^*(a_i)$ or $\xi_i=p_2^*(b_i)$ for some forms $a_i\in \Omega^*(L_1)$ or $b_i\in \Omega^*(L_2)$. Then we have 
 $$\m_{k,\beta_1\times \beta_2}(\xi_1,\ldots,\xi_k)=0;$$
 $$\m_{k+1,\beta_1\times \beta_2}(\xi_1,\ldots,\xi_{i},\xi,\xi_{i+1},\ldots,\xi_k)=0.$$
\end{prop}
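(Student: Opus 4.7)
The plan is to apply Proposition \ref{kurvanishingpi*} to the projections $P_{J,L}$ and $Q_{i+1,J,L}$ from Definition \ref{projections}, partitioning the indices according to which factor of $L_1\times L_2$ each input form depends on. For the first identity, set $J=\{j:\xi_j=p_1^*(a_j)\}$ and $L=\{j:\xi_j=p_2^*(b_j)\}$, so that $J\sqcup L=\{1,\ldots,k\}$. Writing $\bar\pi:(L_1\times L_2)^k\to L_1^{|J|}\times L_2^{|L|}$ for the natural projection onto the relevant copies, we have $\xi_1\times\cdots\times\xi_k=\bar\pi^*\eta$ for the obvious product form $\eta$ (up to a sign from reordering). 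Then $\bar\pi\circ(Ev_1\times\cdots\times Ev_k)=g'\circ P_{J,L}$ for the analogous product-evaluation map $g'$ on $Y:=\mM_{k-|L|+1}(\beta_1)\times\mM_{k-|J|+1}(\beta_2)$, and $Ev_0=(ev_0^1\times ev_0^2)\circ P_{J,L}$. Proposition \ref{compmultisection} provides multisections compatible with $P_{J,L}$ and making both evaluations at $z_0$ submersive on the zero sets, placing us in the setup of Proposition \ref{kurvanishingpi*}.

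The vanishing then reduces to a dimension inequality. Using $|J|+|L|=k$ and the virtual dimension formula recalled in \S\ref{flmodte},
\[\vdim \mM_{k+1}(\beta_1\times\beta_2) = n_1+n_2+\mu(\beta_1)+\mu(\beta_2)+k-2\]
is strictly larger, by exactly $2$, than $\vdim Y = n_1+n_2+\mu(\beta_1)+\mu(\beta_2)+k-4$. Thus Proposition \ref{kurvanishingpi*} applied to $f'=\bar\pi\circ(Ev_1\times\cdots\times Ev_k)$, $f=Ev_0$, and the form $\eta$ yields $\m_{k,\beta_1\times\beta_2}(\xi_1,\ldots,\xi_k)=0$. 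The second identity follows by the identical argument with $Q_{i+1,J,L}$ in place of $P_{J,L}$: the decomposition $J\sqcup L=\{1,\ldots,k+1\}\setminus\{i+1\}$ retains the marked point $z_{i+1}$ on both factors, reflecting that the general form $\xi$ depends on both components, and the analogous dimension difference works out to $1>0$.

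The only subtle point I anticipate is that the input evaluation map $Ev_1\times\cdots\times Ev_k$ does not itself factor through $P_{J,L}$ — for $j\in J$ the $L_2$-component of $Ev_j$ is destroyed by the forgetful map on the $\beta_2$ side — so one must compose with the projection $\bar\pi$ to the smaller product of copies of $L_1$ and $L_2$, and simultaneously replace $\xi_1\times\cdots\times\xi_k$ by its representative $\eta$ on that smaller space. Once this bookkeeping is set up, the argument is essentially a dimension count, and the proposition follows from Propositions \ref{compkuranishi}, \ref{compmultisection}, and \ref{kurvanishingpi*}.
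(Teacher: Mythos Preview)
Your proposal is correct and follows essentially the same approach as the paper: partition the inputs into $J$ and $L$, factor the relevant evaluation through $P_{J,L}$ (respectively $Q_{i+1,J,L}$) by projecting away the unused $L_2$- or $L_1$-components, compute the virtual dimension drop (by $2$ and $1$ respectively), and invoke Proposition \ref{kurvanishingpi*}. Your explicit discussion of the ``subtle point'' --- that one must compose with the projection $\bar\pi$ to make the source-side evaluation factor through $P_{J,L}$ --- is exactly the step the paper carries out when it replaces $Ev_1\times\cdots\times Ev_k$ by the map $ev$ into $L_1^{|J|}\times L_2^{|L|}$.
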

\begin{proof}
We define $J$ to be the set of $j$ such that $\xi_j=p_1^*(a_j)$ and $L=\{1,\ldots, k\} \setminus I$. The first statement follows from applying Proposition \ref{kurvanishingpi*} to the map $P_{J,L}$. To simplify the notation we will assume that $J=\{1,\ldots,l\}$, for some $l$. First, the dimension formula for the moduli spaces gives
\begin{align}
\vdim \mM_{k+1}(\beta_1 \times \beta_2)&=n_1 + n_2 + \mu(\beta_1\times\beta_2) +k-2\nonumber\\
&= n_1 + n_2 + \mu(\beta_1) + \mu(\beta_2) +k-2,\nonumber\\
\vdim \mM_{k-|L|+1}(\beta_1) \times \mM_{k-|J|+1}(\beta_2)&=n_1 + \mu(\beta_1) + k- |L| -2 + n_2 \nonumber\\
& \ \ \ +\mu(\beta_2) +k-|J|-2\\
&= n_1 +n_2 + \mu(\beta_1) + \mu(\beta_2) +k-4.\nonumber
\end{align}
Ignoring the signs introduced in the definition of the maps $\m_{k,\beta}$, we have 
\begin{align}
\m_{k,\beta_1 \times \beta_2}(\xi_1,&\ldots,\xi_k)=\Corr\big(Ev_1\times\ldots\times Ev_k,\mM_{k+1}(\beta_1\times\beta_2)^S,Ev_0\big)(\xi_1\times\ldots\times\xi_k)\nonumber\\
&=\Corr\big(ev,\mM_{k+1}(\beta_1\times\beta_2)^S,Ev_0\big(a_1 \times\ldots a_l \times b_{l+1} \times\ldots\times b_k).\label{mkb1b2}
\end{align}
where $ev:\mM_{k+1}(\beta_1\times\beta_2) \lto L_1^{\times l}\times L_2^{\times (k-l)}$ is defined as 
$$ev=(p_1 \circ Ev_1)\times \ldots \times (p_1 \circ Ev_l) \times (p_2 \circ Ev_{l+1})\times \ldots \times (p_2 \circ Ev_k),$$
which satisfies $ev=ev_1^1\times\ldots\times ev_l^1 \times ev_{l+1}^2\times\ldots\times ev_{k}^2 \circ P_{J,L}$. Since we have chosen a compatible system of multisections, we can apply Proposition \ref{kurvanishingpi*} to conclude that (\ref{mkb1b2}) vanishes.

The second statement is analogous. We define $J$ and $L$ in the same way, but this time use the map $Q_{i+1,J,L}$. Counting dimensions as before, we have 
$$\vdim \mM_{k+2}(\beta_1\times\beta_2)=\vdim \mM_{k-|L|+2}(\beta_1) \times \mM_{k-|J|+2}(\beta_2) +1.$$
Again, to simplify the notation, we assume that $J=\{1,\ldots,i\}$. Then
\begin{align}
\m_{k+1,\beta_1 \times \beta_2}(&\xi_1,\ldots,\xi,\ldots,\xi_k)=\Corr\big(Ev_1\times\ldots\times Ev_{k+1},\mM_{k+2}(\beta_1\times\beta_2)^S,Ev_0\big)\big( \ \vec\xi \ \big)\nonumber\\
&=\Corr\big(ev,\mM_{k+2}(\beta_1\times\beta_2)^S,Ev_0)\big(a_1 \times\ldots a_i \times \xi \times b_{i+1} \times\ldots\times b_k)\label{mk+1b1b2}
\end{align}
where now, $ev:\mM_{k+1}(\beta_1\times\beta_2) \lto L_1^{\times l}\times (L_1 \times L_2) \times L_2^{\times (k-l)}$ is defined as 
$$ev=(p_1 \circ Ev_1)\times \ldots \times (p_1 \circ Ev_i) \times Ev_{i+1} \times (p_2 \circ Ev_{i+2})\times \ldots \times (p_2 \circ Ev_{k+1}).$$
As above, Proposition \ref{kurvanishingpi*} implies that (\ref{mk+1b1b2}) vanishes.
\end{proof}

\begin{prop}\label{mkb10=0}
 Consider $\beta_1 \in G(L_1)$ and $\beta_2 \in G(L_2)$ both non-zero satisfying $E(\beta_1)$, $E(\beta_2) \leq E$ and let $\xi_1,\ldots, \xi_k \in \Omega^*(L_1\times L_2)$ be differential forms. If $\xi_i=p_2^*(b_i)$, then
 $$\m_{k,\beta_1\times 0}(\xi_1,\ldots, \xi_k)=0.$$
 If $\xi_i=p_1^*(a_i)$, then
 $$\m_{k,0\times \beta_2}(\xi_1,\ldots, \xi_k)=0.$$
\end{prop}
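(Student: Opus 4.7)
The strategy mirrors Proposition \ref{b1b2nonzero}: I exhibit a map $\varphi$ from $\mM_{k+1}(\beta_1\times 0)$ to a Kuranishi space of strictly smaller virtual dimension through which both $Ev_0$ and the integrand factor, and then invoke Proposition \ref{kurvanishingpi*}. The case $k=0$ is vacuous in the hypothesis, so I focus on $k\geq 1$.

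By Proposition \ref{b2=0} we have $\mM_{k+1}(\beta_1\times 0)\cong\mM_{k+1}(\beta_1)\times L_2$ as oriented Kuranishi spaces, and under this identification the evaluation maps take the form $Ev_i=(ev_i^1,\text{proj}_{L_2})$, where $\text{proj}_{L_2}$ denotes the second projection. Since each $\xi_i=p_2^*(b_i)$ depends only on the $L_2$-coordinate, one has
$$(Ev_1\times\cdots\times Ev_k)^*(\xi_1\times\cdots\times\xi_k)=\text{proj}_{L_2}^*(b_1\wedge\cdots\wedge b_k),$$
so the form being pushed forward through the correspondence is pulled back from $L_2$.

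Next I would take $\varphi=P_{\emptyset,\{1,\ldots,k\}}:\mM_{k+1}(\beta_1\times 0)\to\mM_1(\beta_1)\times\mM_{k+1}(0)$, with the understanding that the factor $\mM_{k+1}(0)$ is interpreted as $L_2$ itself when $k+1<3$, in agreement with Proposition \ref{b2=0}. A dimension count analogous to the one in Proposition \ref{b1b2nonzero} shows the target has strictly smaller virtual dimension, and Propositions \ref{compkuranishi} and \ref{compmultisection} provide compatible Kuranishi structures and continuous families of multisections on both spaces, with the relevant evaluation maps weakly submersive. The map $Ev_0$ factors through $\varphi$ as $(ev_0^1\times\text{id})\circ\varphi$, since forgetting $z_1,\ldots,z_k$ does not affect $z_0$, and the integrand above is visibly pulled back through $\varphi$ from the $L_2$ factor of the target. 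Proposition \ref{kurvanishingpi*} then yields $\m_{k,\beta_1\times 0}(\xi_1,\ldots,\xi_k)=0$, and the second statement follows by the symmetric argument after exchanging the roles of $L_1$ and $L_2$.

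The main subtlety is bookkeeping for small $k$, where $\mM_{k+1}(0)$ is unstable and the natural target factor is $L_2$ itself; one must verify that Propositions \ref{compkuranishi} and \ref{compmultisection} supply compatible Kuranishi data in this degenerate situation. With that in place, the dimension-dropping argument goes through uniformly for all $k\geq 1$.
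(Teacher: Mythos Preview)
You have misread the hypothesis. The statement asserts vanishing when \emph{a single} input $\xi_i$ is pulled back from $L_2$; the remaining $\xi_j$ for $j\neq i$ are arbitrary forms on $L_1\times L_2$. Your argument assumes \emph{every} $\xi_j=p_2^*(b_j)$, which is why you can write the entire integrand as $\text{proj}_{L_2}^*(b_1\wedge\cdots\wedge b_k)$ and factor through $P_{\emptyset,\{1,\ldots,k\}}$. In the actual setting, for $j\neq i$ the pullback $Ev_j^*\xi_j$ genuinely depends on the $j$-th marked point on the $\beta_1$-disk, so it does \emph{not} factor through the map that forgets all marked points on the $\mM_{k+1}(\beta_1)$ side. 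Your map $P_{\emptyset,\{1,\ldots,k\}}$ simply does not see the data needed to recover the integrand.

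The paper's proof instead uses the single forgetful map $\forg_i:\mM_{k+1}(\beta_1\times 0)\to\mM_k(\beta_1\times 0)$, dropping virtual dimension by exactly one. The point is that under the identification $\mM_{k+1}(\beta_1\times 0)\cong\mM_{k+1}(\beta_1)\times L_2$, the $i$-th evaluation composed with $p_2$ is just the projection $q$ onto the $L_2$ factor, which is unaffected by forgetting $z_i$; all other $Ev_j$ ($j\neq i$) also factor through $\forg_i$ since they do not involve $z_i$. Thus both the source map $ev=Ev_1\times\cdots\times q\times\cdots\times Ev_k$ and the target map $Ev_0$ factor through $\forg_i$, and Proposition~\ref{kurvanishingpi*} applies. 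This argument works uniformly and avoids entirely the bookkeeping problem you flag with $\mM_{k+1}(0)$.
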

\begin{proof}
The proof of both statements is essentially the same, so we will carry out only the first one. Ignoring the signs, we have
\begin{align}
\m_{k,\beta_1 \times 0}(\xi_1,&\ldots,\xi_k)=\Corr\big(Ev_1\times\ldots\times Ev_{k},\mM_{k+1}(\beta_1\times 0)^S,Ev_0\big)(\xi_1\times\ldots\times\xi_k)\nonumber\\
&=\Corr\big(ev,\mM_{k}(\beta_1\times 0)^S,Ev_0\big)\big(\xi_1 \times\ldots \times b_i \times \xi_{i+1} \times\ldots\times \xi_k).\label{mkb10}
\end{align}
This time $ev=Ev_1 \times\ldots\times q \times\ldots Ev_k$, where $q$ is defined as the projection $q: \mM_{k+1}(\beta_1\times 0) \simeq \mM_{k+1}(\beta_1) \times L_2 \lto L_2$.
The map $ev$ commutes with the forgetful map $\forg_i: \mM_{k+1}(\beta_1 \times 0) \lto \mM_{k}(\beta_1 \times 0)$. Since $\vdim \mM_{k+1}(\beta_1 \times 0)>\vdim \mM_{k}(\beta_1 \times 0)$, we can apply Proposition \ref{kurvanishingpi*} to conclude that (\ref{mkb10}) vanishes.
\end{proof}

We are now ready to prove the modulo $T^E$ version of Theorem \ref{intcomsub}.

\begin{prop}\label{comsubalgmodte}
 Fix $E>0$, then $\mF(L_1)$ and $\mF(L_2)$ are commuting subalgebras $mod \ T^E$ of $\mF(L_1\times L_2)$ via the inclusions
  \begin{align}
  \begin{array}{lll}
\iota :\mF(L_1) \lto \mF(L_1\times L_2), & & \iota : \mF(L_2) \lto \mF(L_1\times L_2)\\
\xi_1 \lto (-1)^{|\xi_1|n_2} p_1^*(\xi_1)                            & &  \xi_2 \lto (-1)^{|\xi_2|n_1} p_2^*(\xi_2),\\
\end{array}\nonumber
 \end{align}
\end{prop}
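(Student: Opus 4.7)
The plan is to check, in turn, each axiom of Definitions \ref{subalgebra} and \ref{comsubalg} applied to the inclusions $\iota:\mF(L_l)\lto\mF(L_1\times L_2)$ for $l=1,2$. All vanishing statements will reduce either to Propositions \ref{b1b2nonzero} and \ref{mkb10=0} already proved, or to further applications of the fiber-integration vanishing Proposition \ref{kurvanishingpi*} through the projections $Q_{i+1,J,L}$ of Definition \ref{projections}. All non-vanishing identities will be extracted from the product decomposition $\mM_{k+1}(\beta_1\times 0)\cong \mM_{k+1}(\beta_1)\times L_2$ of Proposition \ref{b2=0} (and its symmetric version), combined with the compatibility of multisections provided by Proposition \ref{compmultisection}. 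The sign convention $(-1)^{|\xi_l|n_{3-l}}$ in $\iota$ is chosen so that all these identities hold on the nose.

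I would begin with the subalgebra conditions. Given inputs $\iota(\xi_j)$ all pulled back from $L_1$, the vanishing of $\mu_{k,\beta_1\times\beta_2}$ for $\beta_2\neq 0$ is exactly the content of Proposition \ref{b1b2nonzero} when $\beta_1\neq 0$, and of Proposition \ref{mkb10=0} when $\beta_1=0$. When $\beta_2=0$, Proposition \ref{b2=0} identifies $\mM_{k+1}(\beta_1\times 0)$ with $\mM_{k+1}(\beta_1)\times L_2$ in a way that intertwines every $Ev_j$ with $(ev_j^1,\pi_{L_2})$; since each $\iota(\xi_j)=\pm p_1^*(\xi_j)$ is constant along the $L_2$ direction, the $L_2$-fibers of $Ev_0$ contribute trivially and the remaining correspondence on $\mM_{k+1}(\beta_1)$ reproduces $\iota(\m^{L_1}_{k,\beta_1}(\xi_1,\ldots,\xi_k))$. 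The symmetric statement for $\mF(L_2)$ is identical.

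Next I would verify the commuting axioms. Condition (a) on mixed inputs is covered directly by Propositions \ref{b1b2nonzero} (when both $\beta_l\neq 0$) and \ref{mkb10=0} (when exactly one $\beta_l$ vanishes but an input comes from the opposite side), together with the elementary facts that $\m_{1,0}=\pm d$ commutes with $p_l^*$ and that $\m_{k,0}=0$ for $k\geq 3$ on the de Rham model (its correspondence factors through $\overline{\mathcal{M}}_{k+1}\times L$ with inputs pulled back from $L$); subcase (a)(i) is just graded-commutativity of the wedge together with the signs in $\iota$ and $\mu_{2,0}$. Condition (b) is the $k=0$ instance of the subalgebra identity, handled exactly as in the previous paragraph. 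For condition (c), with $c=K(\iota(a)\otimes\iota(b))$ inserted at position $i+1$ among inputs $c_j=\iota(a_j)$ from $L_1$ and total class $\beta=\beta_1\times\beta_2$: if $\beta_2\neq 0$, we apply Proposition \ref{kurvanishingpi*} to
$$Q_{i+1,\emptyset,\{1,\ldots,k+1\}\setminus\{i+1\}}:\mM_{k+2}(\beta_1\times\beta_2)\lto \mM_{k+2}(\beta_1)\times\mM_2(\beta_2),$$
through which every relevant evaluation factors (the $a_j$-inputs use only $L_1$, while $Ev_{i+1}$ applied to $c$ uses both factors, and the retained marked points $z_0,z_{i+1}$ on the $L_2$ side accommodate precisely the $p_2^*(b)$ piece of $c$) and whose target has strictly smaller virtual dimension by one. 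If $\beta_2=0$, the product decomposition of Proposition \ref{b2=0} factors the correspondence through $\mM_{k+2}(\beta_1)\times L_2$; the pullback $Ev_{i+1}^*(c)$ splits as $\pm(ev_{i+1}^1)^*(a)\wedge \pi_{L_2}^*(b)$, the $b$-factor is constant along the fibers of $ev_0^1$ and passes through the fiber integration, producing exactly $K(\m^{L_1}_{k+1,\beta_1}(a_1,\ldots,a,\ldots,a_k)\otimes\iota(b))$. The symmetric case with inputs from $L_2$ is treated identically.

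The main obstacle, and essentially the only substantive computation, is the sign bookkeeping. Signs enter from the factors $(-1)^{(k-1)(k-2)/2}$ and $(-1)^{\sum(k-i)|\xi_i|}$ in the definition of $\m_{k,\beta}$, the inclusion sign $(-1)^{|\xi|n_{3-l}}$ in $\iota$, the orientation twist $(-1)^{n_2 k}$ in Proposition \ref{b2=0}, the conventions $\mu_{2,0}(a,b)=(-1)^{|a|}a\wedge b$ and $K(a\otimes b)=(-1)^{|a|}\mu_{2,0}(a,b)$, and the induced orientation on fibers under $(Ev_0)_*$. Checking that all of these recombine to yield precisely the signs prescribed by Definition \ref{comsubalg}(a)(i) and (c) is what fixes the normalization $\iota(\xi_l)=(-1)^{|\xi_l|n_{3-l}}p_l^*(\xi_l)$ in the statement.
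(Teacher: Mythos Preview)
Your outline is correct and follows essentially the same route as the paper: the vanishing clauses of Definitions \ref{subalgebra} and \ref{comsubalg} are discharged by Propositions \ref{b1b2nonzero} and \ref{mkb10=0}, while the non-vanishing identities come from the product splitting of Proposition \ref{b2=0} together with Proposition \ref{proppi*}(d), and the substance of the proof is indeed the sign verification. Two small points are worth flagging. First, in condition (c) you only treat the case where every surrounding input $c_j$ lies in the same factor; you should also note that when the $c_j$ are mixed the required vanishing is already covered by Proposition \ref{mkb10=0} (one of the $c_j$ is pulled back from the ``wrong'' side) or by Proposition \ref{b1b2nonzero}, exactly as in condition (a). Second, your direct appeal to $Q_{i+1,J,L}$ with $L=\emptyset$ when $\beta_2\neq 0$ but $\beta_1=0$ lands in $\mM_{k+2}(0)\times\mM_2(\beta_2)$; the compatible Kuranishi structures of Proposition \ref{compkuranishi} on products are built starting from $\mM_1(\beta_1)\times\mM_1(\beta_2)$ with $\beta_l\neq 0$, so in that degenerate case it is cleaner (and this is what the paper does) to invoke Proposition \ref{mkb10=0} via $\forg_i$ on $\mM_{k+2}(0\times\beta_2)\cong L_1\times\mM_{k+2}(\beta_2)$ rather than $Q$. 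Finally, the paper carries out the sign computation for condition (c)(ii) explicitly using Proposition \ref{proppi*}(d); your proposal correctly lists the sign sources but stops short of the actual check, which is where the normalization of $\iota$ is pinned down.
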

\begin{proof}
The previous two propositions prove all the vanishing conditions on Definitions \ref{subalgebra} and \ref{comsubalg}. All that is left to show that $\mF(L_1)$ and $\mF(L_2)$ are subalgebras modulo $T^E$ of $\mF(L_1\times L_2)$, are the following two equalities,
\begin{align}
\m_{k,\beta_1 \times 0}(\iota(a_1),\ldots,\iota(a_k))&=\iota(\m_{k,\beta_1}(a_1,\ldots,a_k))\nonumber\\
\m_{k,  0\times\beta_2}(\iota(b_1),\ldots,\iota(b_k))&=\iota(\m_{k,\beta_2}(b_1,\ldots,b_k)).\nonumber
\end{align}
For $(k,\beta)=(1,0)$, this simply follows from the definitions, in the other cases we argue as follows. The \Ai-operations are defined as a smooth correspondence on a Kuranishi space with a continuous family of multisections. These are defined by summing the contributions of each Kuranishi neighborhood. Thus it is enough to check that the above equalities hold on each Kuranishi neighborhood in $\mM_{k+1}(\beta_1\times 0)$, which are in correspondence with Kuranishi neighborhoods in $\mM_{k+1}(\beta_1)\times L_2$.  Propositions \ref{b2=0} and \ref{compmultisection} guarantee that for each Kuranishi neighborhood, $S_{\a,i,j}^{-1}(0)\simeq (-1)^{k n_2} S_{\a,i,j}^{-1}(0) \times L_2$, as oriented smooth manifolds and moreover, $Ev_i=ev_i^1 \times id$. To simplify the notation, we will assume there is only one neighborhood and compute
\begin{align}
 \m_{k,\beta_1 \times 0}(\iota(a_1),\ldots,\iota(a_k))&=(-1)^{\gamma+ n_2 k} (ev_0 \times id)_*(ev_1^*(a_1)\wedge \ldots \wedge ev_k^*(a_k)\wedge \theta)\nonumber\\
 &=(-1)^{\gamma+ n_2 k} p_1^*(ev_0)_*(ev_1^*(a_1)\wedge \ldots \wedge ev_k^*(a_k)\wedge \theta),
\end{align}
by Proposition \ref{proppi*}(d). Above by definition, we have $\gamma=\sum_i (k-i)|a_i| +\frac{(k-1)(k-2)}{2} + n_2 \sum_i |a_i|$, hence
\begin{align}
 \m_{k,\beta_1 \times 0}(\iota(a_1),\ldots,\iota(a_k))&=(-1)^{n_2(k + \sum_i |a_i|)} p_1^*(\m_{k,\beta_1}(a_1,\ldots,a_k))\nonumber\\
 &=\iota(\m_{k,\beta_1}(a_1,\ldots,a_k)).\nonumber
\end{align}
This finishes the proof that $\mF(L_1)$ is subalgebra modulo $T^E$ of $\mF(L_1\times L_2)$. The proof for $\mF(L_2)$ is completely analogous.

Next we need to check the commuting relations in Definition \ref{comsubalg}. The first condition follows from commutativity of the wedge product of forms once we note that $\m_{2,0}(\xi_1,\xi_2)=(-1)^{|\xi_1|}\xi_1 \wedge \xi_2$. The second condition follows from combining Proposition \ref{b1b2nonzero} and the above computation of $\m_{k,\beta_1\times 0}$ and $\m_{k,0\times\beta_2}$ in the case $k=0$. Definition \ref{comsubalg}(c) follows from Propositions \ref{b1b2nonzero} and \ref{mkb10=0}, in the cases when $\mu_{k+1,\beta}$ should vanish. We need to check the exceptions in Definition \ref{comsubalg}(c)(i) and (ii), these are equivalent to 
\begin{align}
\m_{k+1,\beta_1\times 0}(\iota(a_1),\ldots,\iota(a_{i}),&K(a\otimes b),\iota(a_{i+1}),\ldots\iota(a_k))\nonumber\\
&=(-1)^{|b|\sum_{j> i}\|a_j\|}K(\m_{k+1,\beta_1}(a_1,\ldots,a,\ldots,a_k)\otimes b)\nonumber\\
\m_{k+1,0 \times \beta_2}(\iota(b_1),\ldots,\iota(b_{i}),&K(a\otimes b),\iota(b_{i+1}),\ldots\iota(b_k))\nonumber\\
&=(-1)^{|a|\left(1+\sum_{j\leq i}\|b_j\|\right)}K(a\otimes \m_{k+1,\beta_2}(b_1\ldots,b,\ldots,b_k)).\nonumber
\end{align}
When $k=0$ and $\beta_1=\beta_2=0$, these are consequence of the Leibniz rule for the de Rham differential. In the remaining cases we follow the same strategy as above. We spell out the proof of the second equality. Assuming the same simplifications and using Proposition \ref{b2=0} we compute
\begin{align}
 &\m_{k+1, 0 \times \beta_2}(\iota(b_1),\ldots,\iota(b_{i}),K(a\otimes b),\iota(b_{i+1}),\ldots\iota(b_k))\nonumber\\
 &=(-1)^{\gamma_1} (id \times ev_0)_*(ev_1^*(b_1)\wedge \ldots \wedge a\wedge ev_{i+1}^*(b)\wedge \ldots \wedge ev_{k+1}^*(b_k)\wedge \theta)\nonumber\\
 &=(-1)^{\gamma_2} (id \times ev_0)_*(a\wedge ev_1^*(b_1)\wedge \ldots \wedge ev_{i+1}^*(b)\wedge \ldots \wedge ev_{k+1}^*(b_k)\wedge \theta)\nonumber\\
 &=(-1)^{\gamma_3} p_1^*(a)\wedge p_2^*((ev_0)_*(ev_1^*(b_1)\wedge \ldots \wedge ev_{i+1}^*(b)\wedge \ldots \wedge ev_{k+1}^*(b_k)\wedge \theta)\nonumber\\
 &=(-1)^{\gamma_4}p_1^*(a)\wedge p_2^*(\m_{k+1,\beta_2}(b_1\ldots,b,\ldots,b_k))\nonumber\\
 &=(-1)^{\gamma_5}K(a\otimes \m_{k+1,\beta_2}(b_1\ldots,b,\ldots,b_k))\nonumber
 \end{align}
where the signs are as follows
\begin{align}
\gamma_1&= \sum_{l\leq i} (k+1-l)|b_l| + (k-i)(|a|+|b|)+\sum_{l> i} (k-l)|b_l|+\frac{k(k-1)}{2}\nonumber\\
& \ \ \ +n_2|a|+n_1(|b| + \sum_l |b_l|),\nonumber\\
\gamma_2&= \gamma_1 + |a|\sum_{l\leq i}|b_l|,\nonumber\\
\gamma_3&= \gamma_2 +(k+1)(|a|+n_1) ,\nonumber\\
\gamma_4&= \gamma_3 + \sum_{l\leq i} (k+1-l)|b_l| + (k-i)|b|+\sum_{l> i} (k-l)|b_l|+\frac{k(k-1)}{2},\nonumber\\
\gamma_5&= \gamma_4 + n_2|a|+n_1(|b| + \sum_l |b_l|+k+1).\nonumber
\end{align}
Here the signs $\gamma_1$ and $\gamma_5$ are given by the definitions of the $\m_k$, $\iota$ and $K$, plus the fact that $\m_k$ has degree $k \pmod 2$. The sign $\gamma_3$ is given by Proposition \ref{proppi*}(d). An elementary computation shows
$$\gamma_5= |a|(1+ \sum_{l\leq i}||b_l||),$$
which implies the result.
\end{proof}

\section{The moduli spaces $\mM_{k+1}(\beta_1\times\beta_2)$}\label{proofs}
In this section, we prove Propositions \ref{compkuranishi}, \ref{b2=0} and \ref{compmultisection}. The proofs will follow the strategy in \cite{Fuk}. We construct Kuranishi neighborhoods and multisections by induction on the area of the disks. This guarantees that the boundary decompositions (\ref{boundary}) of the various moduli spaces are respected. Moreover to ensure compatibility with the forgetful maps $\forg_i$ we first construct Kuranishi structures (and systems of multisections) on the moduli spaces $\mM_1(\beta)$ and then pull them back to $\mM_{k+1}(\beta)$. In fact, Fukaya constructs Kuranishi structures in $\mM_0(\beta)$ first. This is needed only if one wants to make $\mF(L)$ a cyclic \Ai-algebra. As we don't make any use of the cyclic structure we will ignore this point. Finally, to guarantee compatibility with the maps $P_{J,L}$ and $Q_{i+1,J,L}$ we first construct Kuranishi neighborhoods (and systems of multisections) on the moduli spaces $\mM_1(\beta_1)\times \mM_1(\beta_2)$ and pull these back to $\mM_1(\beta_1\times \beta_2)$ via the map $\Pi_1 \times \Pi_2$. Then 
we use the forgetful maps to pull back these Kuranishi structures to $\mM_{k+1}(\beta_1\times\beta_2)$ and  $\mM_p(\beta_1)\times \mM_q(\beta_2)$, for $k=p+q$ or $k=p+q-1$.

\subsection{Proof of Proposition \ref{compkuranishi}}\label{proof1} We first review Fukaya's construction of Kuranishi structures on $\mM_{k+1}(\beta)$ for $k\geq0$ and $\beta=\beta_1$ or $\beta_2$ compatible with $\forg_i$, following \cite{Fuk}.

Let $\forg: \mM_{k+1}(\beta) \lto \mM_1(\beta)$ be the maps that forgets all the marked points except the first one. Consider $(\Sigma, u)\in\mM_{k+1}(\beta)$ and $\forg(\Sigma, u)=(\Sigma^0, u^0)\in\mM_{1}(\beta)$. Denote by $\Gamma$ and $\Gamma^0$ be the (finite) groups of automorphisms of $(\Sigma, u)$ and $(\Sigma^0, u^0)$ respectively. The map $\forg$ induces a map $h:\Gamma \lto \Gamma^0$. In this case, where there is at least one boundary marked in both curves, $h$ is an isomorphism, in the general case, where we forget all the marked points, it is injective. 

Consider the decomposition $\Sigma^0=\cup_{a\in A} \Sigma_a^0$ of $\Sigma^0$ into irreducible components (spheres or disks). Given an irreducible component $\Sigma_a^0$, denote by $k_a$ the number of boundary special  (either marked or singular) points and by $l_a$ the number of interior special points. We say $\Sigma_a^0$ is \emph{stable} if $k_a+2l_a\geq 3$ when $\Sigma_a^0$ is a disk or $l_a\geq 3$ when $\Sigma_a^0$ is a sphere. When $\Sigma_a^0$ is unstable we add enough interior marked points to make it stable and denote the new curve by $\Sigma^{0,+}$. Without loss of generality we can assume that for each point $w$ we add, the map $u^0\vert_{\Sigma_a^0}$ is an immersion at $w$, since, by stability of $(\Sigma^0,u^0)$, $u^0\vert_{\Sigma_a^0}$ is not constant when $\Sigma_a^0$ is unstable. Furthermore, we can pick the set of added marked points $\nu(\Sigma^{0,+})$ so that $\Gamma^0$ acts freely on $\nu(\Sigma^{0,+})$. We make one additional choice, for each $w\in\nu(\Sigma^{0,+})$ we pick a $(2n-2)$-
dimensional submanifold of $M$, $N_w^{2n-2}\subset M$, such that $N_w$ and $u^0\vert_\Sigma$ intersect transversely at $u(w)$. Additionally we require that $N_w=N_{\gamma(w)}$ for any $\gamma\in\Gamma^0$. We stabilize $\Sigma$ by adding  the same marked points. 

Next we have to construct the obstruction bundles. 
\begin{lem}[{\cite[Lemma 3.1]{Fuk}}]\label{0bundle}
Let $\Sigma_a^0$ be one of the irreducible components of $\Sigma^0$. Assume $\Sigma_a^0=D^2$ is a disk and let $U\subseteq D^2$ be any open subset of the interior of $D^2$, that does not intersect small balls around the interior special points of $\Sigma^{0,+}$. Then there exists a finite dimensional subspace $E(u)\subseteq L^p(\Sigma,u^*TM\otimes\Lambda^{0,1})$ satisfying:
\begin{itemize}
\setlength{\parsep}{0pt}
\setlength{\itemsep}{0pt}
\item[{\bf(a)}] elements of $E(u)$ are smooth and supported in U;
\item[{\bf(b)}] $\textrm{Im } D_u\bar\partial+ E(u)=L^p(\Sigma,u^*TM\otimes\Lambda^{0,1})$;
\item[{\bf(c)}] if $K(u)=(D_u\bar\partial)^{-1}(E(u))$ then, for the first boundary marked point $z_0$, $Ev_{z_0}:K(u)\lto T_{u(z_0)}L$ defined by $Ev_{z_0}(\xi)=\xi(z_0)$ is surjective.
\end{itemize}
\end{lem}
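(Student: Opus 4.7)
The plan is to build $E(u)$ in two stages, first arranging condition (b) and then modifying to ensure condition (c). Fix $p > 2$ and set $D := D_u\bar\partial$, viewed as a Fredholm operator from the Sobolev space $W^{1,p}(\Sigma, u^*TM; u^*TL)$ of sections satisfying the Lagrangian boundary condition along $\partial\Sigma$ to $L^p(\Sigma, u^*TM \otimes \Lambda^{0,1})$; the bound $p > 2$ ensures that the evaluation $Ev_{z_0}$ extends continuously to the source.

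\emph{Step 1 (covering the cokernel with sections supported in $U$).} Since $D$ is Fredholm, $\mathrm{Coker}(D)$ is finite-dimensional and its dual is identified with $\ker D^*$, where $D^*$ is the formal adjoint acting on $L^{p'}$. By elliptic regularity any element of $\ker D^*$ is smooth on the smooth locus of $\Sigma$. If $\phi \in \ker D^*$ annihilates every smooth section supported in $U$, then $\phi|_U = 0$, and strong unique continuation for the elliptic equation $D^*\phi = 0$ forces $\phi \equiv 0$ on the component $\Sigma_a^0$ containing $U$; propagating across the nodes, using the matching conditions that cut out $\ker D^*$ on the nodal curve together with unique continuation on the remaining components, yields $\phi \equiv 0$ on $\Sigma$. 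Hence the subspace of smooth sections of $u^*TM \otimes \Lambda^{0,1}$ supported in $U$ surjects onto $\mathrm{Coker}(D)$; choose finitely many representatives whose classes form a basis and let $E_0$ be their span, so that $E_0$ satisfies (a) and (b).

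\emph{Step 2 (surjectivity of $Ev_{z_0}$ on $K(u)$).} Pick smooth sections $\eta_1,\dots,\eta_n$ of $u^*TM$ satisfying the Lagrangian boundary condition, supported in a small neighborhood of $z_0$, with $\eta_j(z_0)$ spanning $T_{u(z_0)}L$; these are obtained from bump functions times basis vectors of $T_{u(z_0)}L$ parallel-transported along $\partial\Sigma$. Since $\ker Ev_{z_0}$ has codimension $n$ in $W^{1,p}$, the subspace $D(\ker Ev_{z_0})$ has finite codimension in $L^p$, and the density/unique continuation argument of Step 1 lets us enlarge $E_0$ to $E(u)$, still consisting of smooth sections supported in $U$, with $E(u) + D(\ker Ev_{z_0}) = L^p$. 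For each $j$, write $D\eta_j = D\xi_j + e_j$ with $\xi_j \in \ker Ev_{z_0}$ and $e_j \in E(u)$; then $\tilde\eta_j := \eta_j - \xi_j$ lies in $K(u)$ and satisfies $Ev_{z_0}(\tilde\eta_j) = \eta_j(z_0)$, so $Ev_{z_0}(K(u)) = T_{u(z_0)}L$. Finally, the finite group $\Gamma$ of automorphisms acts on all objects in sight; since we work over $\mathbb{R}$ and $\Gamma$ is finite, averaging produces a $\Gamma$-invariant choice of $E(u)$ with the same properties.

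The main obstacle is Step 1: the requirement that $E(u)$ cover the \emph{full} cokernel of $D$ on $\Sigma$ using only sections supported in an open subset $U$ of a single irreducible component. This is precisely where strong unique continuation for the $J$-holomorphic equation is indispensable. The hypothesis that $U$ avoids small balls around the interior special points of $\Sigma^{0,+}$ plays no role in the existence argument itself but is essential later for the continuous variation of $E(u)$ under deformations of the domain and for the gluing step in the Kuranishi chart construction.
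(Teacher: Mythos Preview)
The paper does not prove this lemma; it is quoted verbatim from \cite[Lemma 3.1]{Fuk} and used as a black box, so there is no ``paper's own proof'' to compare against. Your argument is the standard one and is correct once the statement is read, as the surrounding text forces, as a claim about a \emph{single smooth disk component} $\Sigma_a^0$: the paper applies the lemma separately to each component (``we fix spaces $E_a$ provided by the lemma for each component of $\Sigma^{0,+}$'') and only afterwards assembles $E=\bigoplus_a E_a$.

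With that reading, your Step~1 simplifies: $\phi|_U=0$ together with $D^*\phi=0$ on the interior of the single disk already gives $\phi\equiv 0$ by Aronszajn-type unique continuation, and no propagation across nodes is required. As written, your nodal propagation is not justified: an element of $\ker D^*$ on a nodal curve is a tuple of $(0,1)$-forms solving $D^*\phi=0$ on each component, with essentially no matching condition at the nodes to exploit, and vanishing at a single point does not trigger unique continuation on the adjacent component. So if you truly intended the lemma to produce, from a single $U$ in one component, a space covering the cokernel on the \emph{whole} nodal curve, that step would be a genuine gap; fortunately that is not what is being asserted.

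Your Step~2 is fine. The point that any section compactly supported in the interior automatically lies in $\ker Ev_{z_0}$ (since $z_0\in\partial\Sigma_a^0$) is exactly what makes the same density/unique continuation argument apply to the finite-codimension subspace $D(\ker Ev_{z_0})$. The final remark on $\Gamma$-averaging is harmless but slightly off target: in the paper's setup $\Gamma^0$ permutes components and the added interior marked points, so what is actually needed is a $\Gamma^0$-\emph{equivariant} family $\{E_a\}_a$ (choose $E_a$ on one component per orbit and transport), not a $\Gamma$-invariant $E(u)$ on a single component.
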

For the case of spheres there is a similar statement. We refer the reader to \cite[Lemma 3.2]{Fuk} for the full details.

Going back to the previous situation, we fix spaces $E_a$ provided by lemma for each component of $\Sigma^{0,+}$. We use the same spaces for the corresponding components of $\Sigma^+$ and take $E_a=0$ for the components which are contracted by $\forg$. By construction $\Sigma^+\in\mM_{\vert\nu(\Sigma^+)\vert,k+1}$ and $\Sigma^{0,+}\in\mM_{\vert\nu(\Sigma^{0,+})\vert,1}$. These are orbifold with corners, so we can find neighborhood of $\Sigma^+$ and $\Sigma^{0,+}$
$$U(\Sigma^+)=\frac{V(\Sigma^+)}{Aut(\Sigma^+)},\ \ U(\Sigma^{0,+})=\frac{V(\Sigma^{0,+})}{Aut(\Sigma^{0,+})},$$ 
where  $V(\Sigma^+), V(\Sigma^{0,+})$ are manifolds with corners and $Aut$ are the (finite) group of automorphisms of the corresponding curve.  Consider $\eta\in V(\Sigma^+)$ and let $\Sigma^+(\eta)$ be the corresponding Riemann surface. For maps 
$$v:(\Sigma^+(\eta),\partial \Sigma^+(\eta))\lto (M,L)$$  
which are $\epsilon$-close to $u$ (see \cite{Fuk} for the definition), for sufficiently small $\epsilon$, we can regard $E_a$ as a subspace of $L^p(\Sigma^+(\eta),v^*TM\otimes\Lambda^{0,1})$. Namely we take parallel transport along the minimal geodesic from $u(x)$ to $v(x)$, for each $x\in U_a$, and denote the result by $E_a(\Sigma^+,v)$. We denote $E(\Sigma^+,v)=\oplus_a E_a(\Sigma^+,v)$  and define
$$V^+(\Sigma,u)=\big\{(\eta, v)\big\vert v:(\Sigma^+(\eta),\partial \Sigma^+(\eta))\rightarrow (M,L)\textrm{ is $\epsilon$-close to $u$ and }\bar\partial_J v\in E(\Sigma^+,v)\big\}.$$ 
Similarly we can define $V^+(\Sigma^0,u^0)$. We have the following:
\begin{prop}[{\cite[Section 7.1]{FOOO}}]\label{gluinganalysis} If $\epsilon$ is sufficiently small, $V^+(\Sigma,u)$ and $V^+(\Sigma^0,u^0)$ are manifolds with corners.
\end{prop}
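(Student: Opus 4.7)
The plan is to realize $V^+(\Sigma,u)$ and $V^+(\Sigma^0,u^0)$ as zero sets of a smooth Fredholm section in a Banach manifold chart, and then apply the implicit function theorem in the presence of gluing parameters at the nodes. Since the two statements are proved by exactly the same argument, I will only describe the case of $V^+(\Sigma,u)$.

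First I would construct a pregluing chart around $(\Sigma,u)$. For each node of $\Sigma^+$ choose a gluing parameter $\rho$ (in $[0,\epsilon)$ at a boundary node and in a disk of radius $\epsilon$ at an interior node); together with the moduli parameter $\eta$ of $V(\Sigma^+)/\mathrm{Aut}(\Sigma^+)$ this gives the deformations of the domain. For each such parameter, cut off $u$ near the node and glue by the standard connect-sum procedure (using the exponential map on $M$ together with a smooth bump function), producing a pre-solution $u_{(\eta,\rho)}$ defined on $\Sigma^+(\eta,\rho)$. By construction the set of maps $v$ which are $\epsilon$-close to $u$ is parametrized by $(\eta,\rho)$ together with a section $\xi$ of $u_{(\eta,\rho)}^*TM$ in a suitable weighted Sobolev space $L^{p,1}_\delta$ (with the Lagrangian boundary condition on $\partial\Sigma^+$); $v$ is then recovered from $\xi$ by the exponential map, so that $v\mapsto(\eta,\rho,\xi)$ identifies a neighbourhood of $u$ in the space of $W^{1,p}$-maps with a Banach manifold.

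Next, the defining condition $\bar\partial_J v\in E(\Sigma^+,v)$ is rewritten as the vanishing of a smooth section
$$s:(\eta,\rho,\xi)\longmapsto \Pi_{(\eta,\rho,v)}\,\bar\partial_J v$$
of a Banach bundle whose fibre at $(\eta,\rho,\xi)$ is $L^p(\Sigma^+(\eta,\rho),v^*TM\otimes\Lambda^{0,1})/E(\Sigma^+,v)$; here $\Pi$ denotes the projection parallel to $E$, which is well defined for $\epsilon$ small. The linearization of $s$ at $(\eta=0,\rho=0,\xi=0)$ restricted to the $\xi$-variable is the composition of $D_u\bar\partial$ with the projection modulo $E(u)$. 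By part (b) of Lemma \ref{0bundle} this composition is surjective, and its kernel is the finite dimensional space of infinitesimal automorphisms plus the genuine deformation space; adding the $(\eta,\rho)$-directions of the pregluing one checks that the full linearization $Ds_{(0,0,0)}$ is surjective. Now the implicit function theorem (in the manifold-with-corners version, since the boundary gluing parameters live in $[0,\epsilon)$) produces a smooth local model of $s^{-1}(0)$ as a manifold with corners whose boundary strata correspond exactly to the loci where some subset of the boundary-gluing parameters $\rho_i$ vanishes; this is by definition $V^+(\Sigma,u)$.

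The main obstacle is of course the gluing analysis that justifies the implicit function theorem in this setting: one must control uniformly, as $\rho\to 0$, the norm of a right inverse to $Ds$ in the weighted Sobolev spaces on the family $\Sigma^+(\eta,\rho)$, and show that the pregluing is smooth in the gluing parameters up to and including $\rho=0$. This is precisely the analysis carried out in detail in \cite[\S7.1]{FOOO}; the required exponential decay estimates on the cylindrical necks, together with the fact that $E(u)$ is supported away from the nodes (by construction in Lemma \ref{0bundle}), ensure that the obstruction bundle extends smoothly over the strata and that the right inverse is uniformly bounded, from which smoothness of $s^{-1}(0)$ as a manifold with corners follows.
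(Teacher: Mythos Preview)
Your sketch is a reasonable outline of the standard gluing analysis argument (pregluing chart, Fredholm section modulo the obstruction bundle, surjectivity from Lemma~\ref{0bundle}(b), and the implicit function theorem with uniform control of the right inverse as the gluing parameters degenerate). However, the paper does not supply its own proof of this proposition: it is stated with a direct citation to \cite[Section~7.1]{FOOO} and used as a black box. So there is nothing to compare against --- the paper's ``proof'' is simply the reference, and your proposal is essentially a summary of what that reference does.
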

From the choice of $\nu(\Sigma^+)$ the map
\begin{align}
ev:V^+(\Sigma,u)&\lto M^{\vert\nu(\Sigma^+)\vert}\nonumber\\
(\eta,v)&\longmapsto (v(w_1),\ldots,v(w_{\vert\nu(\Sigma^+)\vert})),\nonumber
\end{align} is transversal to $N_{w_1}\times\ldots\times N_{w_{\vert\nu(\Sigma^+)\vert}}$. Therefore $$V(\Sigma,u)=V^+(p)\times_{M^{\vert\nu(\Sigma^+)\vert}}\prod_{w\in\nu(\Sigma^+)}N_w$$ is a manifold with corners.
Then we define a Kuranishi neighborhood of $p=(\Sigma,u)$ as follows:
$$\mU_p=\Big(V_p=V(\Sigma,u),\ E_p=\bigoplus_{a} E_a(\Sigma^+,u),\ s_p(\eta,v)=\bar{\partial}_Jv,\ \Gamma,\ \psi_p\Big).$$
If $s_p(\eta,v)=0$, we define
$$\psi(\eta,v)=(\Sigma(\eta),v)\in \mM_{k+1}(\beta)$$ where $\Sigma(\eta)$ is the surface $\Sigma^+(\eta)$ with the points in $\nu(\Sigma^+)$ removed.
In a similar way we define the manifold with corners $V(\Sigma^0,u^0)$ and construct a Kuranishi neighborhood for $q=(\Sigma^0,u^0)$ as
$$\mU_q=\Big(V_q=V(\Sigma^0,u^0),\ E_q=\bigoplus_{a} E_a(\Sigma^{0,+},u),\ s_p(\eta,v)=\bar{\partial}_Jv,\ \Gamma^0,\ \psi_q\Big).$$
From the construction there is a map 
$$\varphi: V_p \lto V_q,$$
which is locally modeled on the forgetful map $\mM_{l,k+1}\lto\mM_{l,1}$. However, to construct coordinate transformations between different Kuranishi neighborhoods, we have to take a special smooth structure on these moduli spaces, see \cite[Appendix A.1.4]{FOOO} for details on this.  Due to this particular choice of coordinates the map $\varphi$ is continuous but not smooth. In fact, the manifold $V_p$ can be stratified according to the combinatorial type of the underlying curve and when we restrict to one of these strata, $\varphi$ is a smooth submersion. In particular $\varphi$ is a smooth submersion when restricted to the top dimensional strata (which is open and dense). Nevertheless, as explained in \cite[Appendix A.1.4]{FOOO}, $s_p=s_q \circ \varphi$ is still a smooth map.

To complete the proof we perform induction on $E(\beta)$ to construct Kuranishi neighborhoods on $\mM_1(\beta)$ and use the previous argument to obtain compatible Kuranishi neighborhoods on $\mM_{k+1}(\beta)$ for all $k\geq0$. We take $\beta_1$ with minimal area $\omega(\beta_1)$ and consider $p=(\Sigma,u)\in\mM_1(\beta_1)$. Following the above procedure we obtain Kuranishi structures on $\mM_{1}(\beta_1)$. Observe that, by Lemma \ref{0bundle}(c), $ev_0:\mM_1(\beta_1)\lto L$ is weakly submersive. Assume now, by induction, that we have constructed the required Kuranishi structures on $\mM_1(\beta')$ for all $\beta'$ such that $\omega(\beta')<\omega(\beta)$. We consider one possible boundary strata component of $\mM_1(\beta)$, all the other are similar:
$$\mM_2(\beta_1)\ _{ev_0}\times_{ev_0}\mM_1(\beta_2),$$
where $\beta_1+\beta_2=\beta$. By induction both factors already have Kuranishi structures, so we take the fiber product Kuranishi structure. The map
$$ev_0:\mM_2(\beta_1)\ _{ev_1}\times_{ev_0}\mM_1(\beta_2)\lto L$$ which is induced by $ev_0:\mM_2(\beta_1)\lto L$, is weakly submersive by \cite[Lemma 2.2]{Fuk}, since by induction $ev_0$ on both components is weakly submersive. Therefore, as in Proposition \ref{gluinganalysis} we can construct a  Kuranishi structure on a neighborhood of this component extending the structure on the boundary, so that $ev_0:\mM_1(\beta)\lto L$ is still submersive. Which completes the inductive step.

Next we construct the Kuranishi structures on the moduli spaces $\mM_1(\beta_1)\times \mM_1(\beta_2)$ and $\mM_1(\beta_1\times \beta_2)$ compatible with  the map $\Pi_1 \times \Pi_2$. The strategy is similar to the one above. Consider $(\Sigma, u) \in \mM_1(\beta_1\times \beta_2)$ and let $(\Sigma^1, u_1)=\Pi_1(\Sigma, u)$ and $(\Sigma^2, u_2)=\Pi_2(\Sigma, u)$. 

First to stabilize the domains, we proceed as follows. As before we pick interior marked points $\nu(\Sigma^{1,+})$ and submanifolds $N^1_{w^1}$ of $M_1$ so that each irreducible component of $\Sigma^{1,+}$ is stable and $N^1_{w^1}$ is transversal to $u_1$ at $w^1$, for each $w^1\in\nu(\Sigma^{1,+})$. We do the same for $(\Sigma^2,u_2)$. From the definition of $\Sigma^1$ and $\Sigma^2$ we can regard elements of $\nu(\Sigma^{1,+})$ and $\nu(\Sigma^{2,+})$ as points in $\Sigma$. Without loss of generality we can assume that $\nu(\Sigma^{1,+})\cap\nu(\Sigma^{2,+})=\emptyset$. Then we define $\Sigma^+$ to be the curved obtained from $\Sigma$ by adding the marked points $\nu(\Sigma^+)=\nu(\Sigma^{1,+})\cup \nu(\Sigma^{2,+})$. Note that $\Sigma^+$ is a stable curve and that, for points $w\in \nu(\Sigma^{1,+})$, $u$ is transversal to $N_w:=N_w^1\times M_2$ at $w$ and for points $w\in\nu(\Sigma^{2,+})$, $u$ is transversal to $N_w:=M_1\times N_w^2$.

We can easily see that there are maps $\Gamma=Aut(\Sigma,u)\lto\Gamma_i=Aut(\Sigma^i,u_i)$, for $i=1,2$, so that the product $h:\Gamma\lto\Gamma_1\times\Gamma_2$ is an injection. Thus, since we took $\nu(\Sigma^{1,+})$ and $\nu(\Sigma^{2,+})$ to be invariant under $\Gamma_1$ and $\Gamma_2$ (respectively), we have that $\nu(\Sigma^{+})=\nu(\Sigma^{1,+})\cup\nu(\Sigma^{2,+})$ is $\Gamma$-invariant.

Next we choose the obstruction bundles. For each irreducible component of $\Sigma^1$ and $\Sigma^2$, we choose spaces $E(u_1)$ and $E(u_2)$ satisfying the conclusions of Lemma \ref{0bundle} for the maps $(\Sigma_a,u_1\vert_{\Sigma_a})$ and $ (\Sigma_a,u_2\vert_{\Sigma_a})$ respectively. In the case $u_2\vert_{\Sigma_a}$ (respectively $u_1\vert_{\Sigma_a}$) is constant, we simply take $E(u_2\vert_{\Sigma_a})=0$ (respectively $E(u_1\vert_{\Sigma_a})=0$).
Then the space $E(u\vert_{\Sigma_a})=E(u_1\vert_{\Sigma_a})\oplus E(u_2\vert_{\Sigma_a})$ satisfies the conditions of the lemma for $(\Sigma_a,u\vert_{\Sigma_a})$, namely:
\begin{align}
\textrm{Im} D_u\bar\partial+E(u)&=(\textrm{Im} D_{u_1}\bar\partial\oplus \textrm{Im} D_{u_2}\bar\partial)+(E(u_1)\oplus E(u_2))\nonumber\\
&=(\textrm{Im} D_{u_1}\bar\partial+E(u_1))\oplus (\textrm{Im} D_{u_2}\bar\partial+E(u_2))\nonumber\\
&=L^p(\Sigma_a,u_1^*TM_1\otimes\Lambda^{0,1})\oplus L^p(\Sigma_a,u_2^*TM_2\otimes \Lambda^{0,1})\nonumber\\
&= L^p(\Sigma_a,u^*TM\otimes\Lambda^{0,1}),\nonumber
\end{align}
and $K(u)=K(u_1)\oplus K(u_2)$. Then for any $z_0\in\partial\Sigma_a$
$$Ev_{z_0}:K(u)\lto T_{u(z_0)}L=T_{u_1(z_0)}L_1\oplus T_{u_2(z_0)}L_2$$
splits as $Ev_{z_0}=Ev^1_{z_0}\oplus Ev^2_{z_0}$, where $Ev^i_{z_0}=(p_i)_*\circ Ev_{z_0}$ for $p_{i*}:T_*(L_1\times L_2)\lto TL_1\oplus TL_2$.
Since each $Ev^i_{z_0}$ is surjective, we conclude $Ev_{z_0}$ is surjective. The situation of where the component is contracted by $\Pi_1$ (or $\Pi_1$) can be handed similarly. 

However this choice of spaces $E_a$ can fail to satisfy the support condition in Lemma \ref{0bundle}~(a), that requires that elements of $E_a$ be supported away from (interior) special points. This can happen only in the case we now describe: suppose $\Sigma_a$ is a component of $\Sigma$ which is not contracted by $\Pi_1$ or $\Pi_2$ but there is a tree of sphere bubbles $\Theta$ attached to some interior point $y \in \Sigma_a$ which is constant in the second factor and therefore contracted by $\Pi_2$ (there is also the symmetric case, where the bubbles are constant in the first factor, but this can be handled in the same way). Then the corresponding point $y \in \Sigma_a^2$ is not a special point in $\Sigma_a^2$ since the sphere components were contracted. And so we have $E(u\vert_{\Sigma_a})=E(u_1\vert_{\Sigma^1_a})\oplus E(u_2\vert_{\Sigma^2_a})$, where elements of $E(u_1\vert_{\Sigma^1_a})$ are supported away from $y$, but elements in $E(u_2\vert_{\Sigma^2_a})$ might not be.

Nevertheless the rest of the construction can be carried out with this choice of $E(u)$. The reason is the following: this support condition, imposed in \cite{Fuk,FOOO}, is used to perform the gluing near the resolution of the node $y$ in the curve $\Sigma$. In our case the target $M$ is a product $M_1\times M_2$ and we are using a product almost complex structure (and a product metric) therefore the gluing is done factor-wise. Moreover, since the second factor of the map is constant in the sphere components, we are actually not performing any gluing (of maps) in the second factor, only on the first where we do have the usual support condition.

For purposes of clarity, we provide all the details to the above argument in the simpler case where $\Sigma$ has only two irreducible components $\Lambda$ and $\Theta$ (the sphere). Denote by $v=(v_1,v_2)$ the restriction of the map $u$ to $\Lambda$ and by $w=(w_1,w_2)$ the restriction to $\Theta$. By assumption $w_2$ is constant, therefore we can describe $V^+(\Sigma,u)$ as follows:
$$\displaystyle V^+(\Sigma,u)=\left\{(\eta, t) \Big\vert \begin{array}{ll}
t=(t_1,t_2):(\Sigma^+(\eta), \ \partial \Sigma^+(\eta))\rightarrow (M,L) \textrm{ is $\epsilon$-close to $u$}, \\
 \bar{\partial}_{J_1} t_1\in E(\Sigma_1^+, u_1), \  t_2= \tilde{t}_2\circ p,  \ \bar{\partial}_{J_2} \tilde{t}_2\in E(\Lambda^+, v_2),
\end{array}
\right\},$$
where $p:\Sigma^+(\eta) \to \Sigma_2^+(\eta)  $ is the collapse map. We can prove the above is a smooth manifold with corners, in the same way as Proposition \ref{gluinganalysis}. This is because $\Sigma_2^+(\eta)$ is a smooth disk, hence we
only need to consider the gluing problem in the first factor, where we use $E(\Sigma_1^+, u_1)$ which has the correct support condition so the results of \cite{FOOO} do apply.
Therefore the fact that elements in $E(u_2\vert_{\Lambda})$ may be supported near $y$ causes no problem.

Now we can follow the same procedure as before to construct Kuranishi structures on $\mM_1(\beta_1)\times \mM_1(\beta_2)$ and $\mM_1(\beta_1\times \beta_2)$. Let $(V,E,\Gamma,s,\psi)$ be a Kuranishi neighborhood of $(\Sigma, u)$ and let $(V^{1,2},E_1\oplus E_2,\Gamma^1 \times \Gamma^2,s^{1,2},\psi^{1,2})$ be a Kuranishi neighborhood of $(\Sigma^1,u_1)\times(\Sigma^2,u_2)$. Then, by construction $V^{1,2}=V^1 \times V^2$ where
\begin{align}
V^i=\big\{ \ (\Sigma^{i,+}(\eta_i),v_i) \ \big\vert \ v_i:(\Sigma^{i,+}(\eta),& \partial\Sigma^{i,+}(\eta))\rightarrow (M_i,L_i),\nonumber\\
&\bar\partial_{J_i} v_i\in E_i,v_i(w)\in N^i_w,w\in\nu(\Sigma^{i,+}) \ \big\}.\nonumber
\end{align}
Since
$$V=\big\{(\Sigma^+(\eta),v)\big\vert v:(\Sigma(\eta),\partial \Sigma(\eta))\rightarrow(M,L),\bar\partial_J v\in E_1\oplus E_2, v(w)\in N_w,w\in\nu(\Sigma^+)\big\},$$
there is a map $\varphi_\Pi: V \lto V^{1,2}$ defined as
$$\varphi_\Pi(\Sigma^+(\eta),v)=\big((\Sigma^{1,+}(\eta),p_1\circ v),(\Sigma^{2,+}(\eta),p_2\circ v)\big).$$
This map is locally modeled on the forgetful map $\mM_{l,1}\lto\mM_{l_1,1}\times\mM_{l_2,1}$ (with $l=l_1+l_2$). As above, this is a continuous map which is a submersion when restricted to each stratum.

This finishes the construction of the Kuranishi structures on $\mM_1(\beta_1)\times \mM_1(\beta_2)$ and $\mM_1(\beta_1\times \beta_2)$. From the construction it is obvious that they are compatible with the map $\Pi_1 \times \Pi_2$. Then, as before we construct Kuranishi structures on $\mM_{k+1}(\beta_1\times\beta_2)$ and  $\mM_p(\beta_1)\times \mM_q(\beta_2)$, for $k,p,q\geq0$, compatible with forgetting boundary marked points. Combining these two compatibilities we obtain Kuranishi structures compatible with all the maps $P_{I,J}$ and $Q_{i,I,J}$.
This completes the proof of Proposition \ref{compkuranishi}.

\subsection{Proof of Proposition \ref{b2=0}}\label{proof2}
With the exception of the statement about the orientations, this follows directly from the construction of Kuranishi structures in the proof of Proposition \ref{compkuranishi}.

In order to prove the statement on the orientations, we first recall some conventions on the orientations of the spaces $\mM_{k+1}(\beta)$, following \cite[Chapter 8]{FOOO}. The relative spin structure $\sigma$ on the Lagrangian $L$ determines a (stable) trivialization of $u^*TL$, restricted to the boundary of the disk for each map $u$. This gives a canonical orientation of the determinant line bundle $det D_u\bar\partial$, or equivalently of the determinant of Dolbeaut operator $det \bar\partial_u$ (see Section 8.1 of \cite{FOOO}). Which in turn determines an orientation of $\tilde \mM (\beta)$ the space of parametrized holomorphic disks. Then we define
$$\hat\mM_{k+1}(\beta)=\tilde\mM(\beta)\times (S^1)^{k+1}$$ where $(S^1)^{k+1}$ parametrizes the $(k+1)-$marked points on the boundary of the disk, ordered according to the usual orientation on $\partial D ^2= S^1$. The group $G= PSL_2(\mathbb{R})$ acts on the right of $\hat\mM_{k+1}(\beta)$ and $\mM_{k+1}(\beta)=\hat\mM_{k+1}(\beta)/G$. We define the orientation by
$$\mM_{k+1}(\beta)\times Lie(G)=\tilde\mM(\beta)\times (S^1)^{k+1}.$$

In our situation, if $\beta=\beta_1\times0$ we want to orient $det \bar\partial_u$ for $u=u_1\times u_2$, where $u_2$ is a constant map. Therefore we have $\ker\bar\partial_u=\ker \bar\partial_{u_1}\oplus TL_2$ and $coker\bar\partial_u=coker\bar\partial_{u_1}$. Therefore $det \bar\partial_u=det \bar\partial_{u_1}\otimes TL_2$ which by definition gives
$$\tilde \mM(\beta_1\times 0)=\tilde \mM(\beta_1)\times L_2.$$
Then following the definition of orientation on $\mM_{k+1}(\beta_1\times 0)$ we have
\begin{align}
&\tilde\mM(\beta_1\times 0)\times (S^1)^{k+1}=\mM_{k+1}(\beta_1\times 0)\times Lie (G)\nonumber\\
\Leftrightarrow &\tilde\mM(\beta_1)\times L_2\times (S^1)^{k+1}=\mM_{k+1}(\beta_1\times 0)\times Lie (G)\nonumber\\
\Leftrightarrow& (-1)^{n_2(k+1)}\tilde\mM(\beta_1)\times (S^1)^{k+1}\times L_2=\mM_{k+1}(\beta_1\times 0)\times Lie (G)\nonumber\\
\Leftrightarrow& (-1)^{n_2(k+1)}\mM_{k+1}(\beta_1)\times Lie(G)\times L_2=\mM_{k+1}(\beta_1\times 0)\times Lie (G)\nonumber\\
\Leftrightarrow& (-1)^{n_2k}\mM_{k+1}(\beta_1)\times L_2\times Lie(G)=\mM_{k+1}(\beta_1\times 0)\times Lie (G)\nonumber
\end{align}
where the last equality holds because $\dim G=3$. So we conclude
$$\mM_{k+1}(\beta_1\times0)=(-1)^{n_2k}\mM_{k+1}(\beta_1)\times L_2.$$ 
In the other case, $\beta=0\times\beta_2$, by the previous argument we have
$$det D_u\bar\partial = (-1)^{n_1 \dim coker D_{u_2}\bar\partial} det TL_1 \otimes det D_{u_2}\bar\partial.$$
Comparing with the definition of orientation on the product of Kuranishi spaces in Convention 8.2.1 in \cite{FOOO}, we conclude
$$\tilde \mM(0\times \beta_2)= L_1\times \tilde\mM(\beta_2).$$
The same argument then shows $\mM_{k+1}(0\times\beta_2)=L_1\times\mM_{k+2}(\beta_2).$
This completes the proof of Proposition \ref{b2=0}.

\subsection{Proof of Proposition \ref{compmultisection}}\label{proof3}
The strategy of the proof is similar to the proof of Proposition \ref{compkuranishi}. 

We first construct multisections on $\mM_{k+1}(\beta_l)$, for $l=1,2$ and $E(\beta_l)\leq E$ compatible with $\forg_i$. This was done by Fukaya in \cite{Fuk}, we simply highlight the main points. We first consider the situation of one Kuranishi chart $\mU_q$ in $\mM_1(\beta)$ constructed in Proposition \ref{compKur}. It is shown in \cite{Fuk}, that there is $\epsilon>0$, a vector space $W_q$ and a $W_q$-parametrized family of multisections $S_q:V_q\times W_q\lto \pi^*_q E_q$, such that $S_q$ is transversal, $\epsilon$- close to $s_q$ and $ev_{z_0}:S^{-1}_{q,i,j}(0)\lto L$ is a submersion. Then given a Kuranishi neighborhood $\mU_p$ in $\mM_{k+1}(\beta)$ with a map to $\mU_q$, we take $W_p=W_q$ and $S_p=S_q \circ \varphi$. 
The rest of the proof follows the usual argument on the area of the disks. The only difficulty is that the map $\varphi$ is not smooth, thus there is no guarantee that $S_p$ is smooth. However the map $\varphi$ is non-smooth only on directions transversal to each stratum in $\mM_{l,k}(\beta)$. Since the construction of multisections is by induction, we first define the multisection in the lower dimensional strata and then extend it to a neighborhood. Then one chooses the extension of the multisection so that it decays sufficiently fast in directions transversal to each stratum. Here is what we mean, if $y$ is a local coordinate perpendicular to the stratum and $T=1/y$, we require 
$$\Big\vert \frac{\partial^{k+l}S_q}{\partial T^k \partial x^l} \Big\vert< C e^{-cT},$$
for constants $C,c$, depending on $k,l$. As explained in \cite[page 778]{FOOO}, this condition is well defined since it is invariant under coordinate transformations of the Kuranishi structure. The map $\varphi$ is induced by the forgetful map $\forg_i$, thus it is locally either a submersion, when no components of the curve are contracted or, when one component is contracted, in the $T$ coordinate, it is given as $T=T_1+T_2$. Therefore $S_{p}=S_q \circ \varphi$ satisfies the same decay condition hence smooth. Moreover the multisection was already transversal  and $ev_{z_0}|_{S^{-1}_{q}(0)}$  was already submersive when restricted to the stratum, thus the decay condition in the direction normal to the stratum does not affect transversality. Therefore $S_p=S_q \circ \varphi$ is transversal and $ev_{z_0}|_{S^{-1}_{p}(0)}$ is submersive. Also note that, since $\varphi$ is smooth when restricted to each boundary strata, the map $\varphi\vert_{S^{-1}_{p}(0)}$ is, in particular, smooth on the top dimensional strata.

Next we construct continuous families of multisections on $\mM_1(\beta_1\times \beta_2)$ and $\mM_1(\beta_1)\times \mM_1(\beta_2)$ compatible with $\Pi_1 \times \Pi_2$, following the same inductive scheme. Recall from the proof of Proposition \ref{compkuranishi}, each Kuranishi neighborhood in $\mM_1(\beta_1)\times \mM_1(\beta_2)$ is the product of some Kuranishi neighborhoods on $\mM_1(\beta_1)$ and $\mM_1(\beta_2)$. From the above discussion, for $l=1,2$, we can take transversal multisections $S_{q_l}$ parametrized by $W_{q_l}$, such that $ev_0^l$ is a submersion when restricted to each stratum. Then define multisections $S_q=S_{q_1}\times S_{q_2}$ parametrized by $W_q=W_{q_1}\times W_{q_2}$ on $\mM_1(\beta_1)\times \mM_1(\beta_2)$. Additionally we impose the same decay conditions on directions transversal to each stratum. Then on $\mM_1(\beta_1\times \beta_2)$ we take $W_p=W_q$ and $S_p=S_q \circ \varphi_\Pi$. During the proof of Proposition \ref{compkuranishi} we saw that the map $\varphi_\Pi$ is locally 
modeled on 
the forgetful map $\mM_{l,1}\lto\mM_{l_1,1}\times\mM_{l_2,1}$, therefore it has the same local description as the map $\varphi$ discussed above. So we conclude that $S_p$ is smooth and transversal. Moreover, since $Ev_0= ev_0^1\times ev_0^2 \circ \varphi_\Pi$ and $\varphi_\Pi$ is a submersion when restricted to each stratum, we conclude that $Ev_0|_{S_p^{-1}(0)}$ is a submersion.

Repeating the above argument we construct multisections on $\mM_{k+1}(\beta_1\times\beta_2)$ and  $\mM_p(\beta_1)\times \mM_q(\beta_2)$, for $k,p,q\geq0$, compatible with forgetting boundary marked points. Combining these compatibilities, we obtain systems of multisections compatible with all the maps $P_{J,L}$ and $Q_{i+1,J,L}$.
This completes the proof of Proposition \ref{compmultisection}.

\section{From $\mF(L_1\times L_2)$ modulo $T^E$ to $\mF(L_1\times L_2)$}\label{secfukprodmodte}

In this section we will complete the proof of Theorem \ref{intcomsub}. So far we have shown (Proposition \ref{comsubalgmodte}) that for each $E>0$, $\mF(L_1)$ and $\mF(L_2)$ are commuting subalgebras modulo $T^{E}$ of $\mF(L_1\times L_2)$. We want to upgrade this to full-fledged \Ai-algebras. 

Let us introduce some notation. Consider $0<E_0<E_1$ and let $S_0$ and $S_1$ be two systems of multisections provided by Proposition \ref{compmultisection}, for energies $E_0$ and $E_1$ respectively. For $l=1,2$ denote by $\mF(L_1)^l$, $\mF(L_2)^l$ and $\mF(L_1\times L_2)^l$ the \Ai-algebras modulo $T^{E_l}$ determined by these systems of multisections. We know from Proposition \ref{comsubalgmodte}, that $\mF(L_1)^l$ and $\mF(L_2)^l$ are commuting subalgebras modulo $T^{E}$ of $\mF(L_1\times L_2)^l$. 

\begin{prop}\label{comsubisotopies}
 There exist three pseudoisotopies modulo $T^{E_0}$: $(\m^t_{k,\beta_1}, \mc^t_{k,\beta_1})$  between $\mF(L_1)^0$ and $\mF(L_1)^1$; $(\m^t_{k,\beta_2}, \mc^t_{k,\beta_2})$ between $\mF(L_2)^0$ an $\mF(L_2)^1$; and $(\m^t_{k,\beta_1\times \beta_2}, \mc^t_{k,\beta_1\times \beta_2})$ between $\mF(L_1\times L_2)^0$ an $\mF(L_1\times L_2)^1$. Using the notation of Proposition \ref{comsubalgmodte}, these pseudoisotopies satisfy the following relations
\begin{itemize}
\setlength{\parsep}{0pt}
\setlength{\itemsep}{0pt}
\item[{\bf(a)}] If $\beta_1,\beta_2\neq 0$ then $$\m^t_{k,\beta_1\times \beta_2}(\xi_1,\ldots,\xi_k)=0=\mc^t_{k,\beta_1\times \beta_2}(\xi_1,\ldots,\xi_k),$$
  $$\m^t_{k+1,\beta_1\times \beta_2}(\xi_1,\ldots,\xi_{i},\xi,\xi_{i},\ldots,\xi_k)=0=\mc^t_{k,\beta_1\times \beta_2}((\xi_1,\ldots,\xi_{i},\xi,\xi_{i},\ldots,\xi_k)).$$
  \item[{\bf(b)}] If $\xi_i=p_2^*(b_i)$, then
 $$\m_{k,\beta_1\times 0}(\xi_1,\ldots, \xi_k)=0=\mc^t_{k,\beta_1\times 0}(\xi_1,\ldots, \xi_k),$$
 and if $\xi_i=p_1^*(a_i)$, then
 $$\m_{k,0\times \beta_2}(\xi_1,\ldots, \xi_k)=0=\mc^t_{k,0\times \beta_2}(\xi_1,\ldots, \xi_k).$$
  \item[{\bf(c)}] 
  \begin{align}
         \m^t_{k,\beta_1 \times 0}(\iota(a_1),\ldots,\iota(a_k))&= \iota(\m^t_{k,\beta_1}(a_1,\ldots,a_k)),\nonumber\\
         \mc^t_{k,\beta_1 \times 0}(\iota(a_1),\ldots,\iota(a_k))&=(-1)^{n_2} \iota(\mc^t_{k,\beta_1}(a_1,\ldots,a_k)),\nonumber\\
         \m^t_{k+1,\beta_1\times 0}(\iota(a_1),\ldots,\iota(a_{i}),&K(a\otimes b),\iota(a_{i+1}),\ldots\iota(a_k))\nonumber\\
&=(-1)^{|b|\sum_{j> i}\|a_j\|}K(\m^t_{k+1,\beta_1}(a_1,\ldots,a,\ldots,a_k)\otimes b),\nonumber\\
         \mc^t_{k+1,\beta_1\times 0}(\iota(a_1),\ldots,\iota(a_{i}),&K(a\otimes b),\iota(a_{i+1}),\ldots\iota(a_k))\nonumber\\
&=(-1)^{n_2+|b|\sum_{j> i}\|a_j\|}K(\mc^t_{k+1,\beta_1}(a_1,\ldots,a,\ldots,a_k)\otimes b).\nonumber
        \end{align}
Finally the analogous statement for $\m^t_{k,0\times \beta_2}$ and $\mc^t_{0\times \beta_2}$ also holds.
 \end{itemize}
\end{prop}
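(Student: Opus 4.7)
The plan is to construct all three pseudoisotopies simultaneously using parametrized moduli spaces $\mM^I_{k+1}(\beta)=[0,1]\times\mM_{k+1}(\beta)$, following the strategy of \S \ref{modte} but in the presence of the product structure. Concretely, I would first establish a parametrized version of Propositions \ref{compkuranishi} and \ref{compmultisection}: Kuranishi structures and continuous families of transversal multisections on $\mM^I_{k+1}(\beta_1\times\beta_2)$, $\mM^I_{k+1}(\beta_l)$ and $\mM^I_{p}(\beta_1)\times\mM^I_{q}(\beta_2)$, restricting to $S_0$ at $t=0$ and to $S_1$ at $t=1$, compatible with the forgetful maps $\forg_j$ and with the projection maps $P_{J,L}$ and $Q_{i+1,J,L}$, and such that the evaluation maps $Ev_0^I$, $(ev_0^l)^I$ and $(ev_0^1\times ev_0^2)^I$ are submersions when restricted to the zero sets.

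This step is the main obstacle, but it is essentially a combination of Proposition \ref{oneparametersection} with the inductive construction in \S \ref{proof3}: one proceeds by induction on the total energy, constructing the multisections first on $\mM^I_1(\beta_1)$ and $\mM^I_1(\beta_2)$ (which gives families on $\mM^I_1(\beta_1)\times\mM^I_1(\beta_2)$ by taking products), then pulling back via $\varphi_\Pi\times\mathrm{id}_{[0,1]}$ to $\mM^I_1(\beta_1\times\beta_2)$, and finally pulling back via the forgetful maps to the moduli spaces with more marked points. The decay condition along strata described at the end of \S \ref{proof3} is preserved by the product with $[0,1]$, so smoothness and transversality of the pulled-back multisections follows as there.

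Given these families, define $\m^I_{k,\beta}$ by the smooth correspondence formula from Proposition \ref{ainfinityI}, separately on each of the three moduli spaces. The identity $\m^I_{1,0}=(-1)^n d$ on $[0,1]\times L$ and Propositions \ref{kurstokes} and \ref{composition} make each $(\Omega^*([0,1]\times L_*), \m^I_{k,\beta})$ an \Ai-algebra modulo $T^{E_0}$, as in Proposition \ref{ainfinityI}. Setting $\m^t_{k,\beta}(\vec\xi)=(-1)^k\rho(t)$ and $\mc^t_{k,\beta}(\vec\xi)=(-1)^k\sigma(t)$ for $\m^I_{k,\beta}(\vec\xi)=\rho(t)+dt\wedge\sigma(t)$, as in equation \eq{mkt}, and arguing exactly as in Proposition \ref{isotopy}, produces the three required pseudoisotopies modulo $T^{E_0}$.

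The vanishing statements (a) and (b) then follow by applying Proposition \ref{kurvanishingpi*} to the parametrized versions $P_{J,L}^I=P_{J,L}\times\mathrm{id}_{[0,1]}$ and $Q_{i+1,J,L}^I$: the dimension count is unchanged by the extra $[0,1]$-factor, and the compatibility of the multisections with these projections is guaranteed by the construction. The identities in (c) are established by repeating the Kuranishi-neighborhood-by-neighborhood computation at the end of the proof of Proposition \ref{comsubalgmodte}, now on $\mM^I_{k+1}(\beta_1\times 0)$; the parametrized analogue of Proposition \ref{b2=0} gives $\mM^I_{k+1}(\beta_1\times 0)=(-1)^{n_2k}[0,1]\times\mM_{k+1}(\beta_1)\times L_2$ with the evaluation factoring through $\mathrm{id}_{[0,1]}\times ev^1\times \mathrm{id}_{L_2}$, and Proposition \ref{proppi*}(d) converts fiber integration along $Ev_0^I$ into $(p_1)^*$ followed by fiber integration along $(ev_0^1)^I$. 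Separating $\m^I$ into $\rho+dt\wedge\sigma$ and tracking the $(-1)^k$ in \eq{mkt} produces the factor $(-1)^{n_2}$ discrepancy between the identities for $\m^t$ and $\mc^t$ in (c), exactly as stated; the case $\beta=0\times\beta_2$ is symmetric.
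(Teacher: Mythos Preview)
Your proposal is correct and follows essentially the same route as the paper: the paper likewise sets up parametrized projection maps $P^I_{J,L}$, $Q^I_{i+1,J,L}$, states the $I$-parametrized analogues of Propositions \ref{compkuranishi}, \ref{b2=0} and \ref{compmultisection} (the latter is Proposition \ref{compmultiparameter}), builds $\m^I_{k,\beta}$ and extracts $(\m^t,\mc^t)$ via \eq{mkt}, and then deduces (a), (b), (c) by rerunning the arguments of Propositions \ref{b1b2nonzero}, \ref{mkb10=0} and \ref{comsubalgmodte}. One small point of notation: the target of $P^I_{J,L}$ should be the fiber product $\mM^I_{k-|L|+1}(\beta_1)\times_I\mM^I_{k-|J|+1}(\beta_2)$ over $[0,1]$ rather than the direct product $\mM^I_{p}(\beta_1)\times\mM^I_{q}(\beta_2)$ (which carries an extra $[0,1]$ factor); your formula $P_{J,L}\times\mathrm{id}_{[0,1]}$ already lands in the fiber product, so this is only a notational slip. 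The paper phrases the origin of the extra $(-1)^{n_2}$ in (c) as coming from the fact that $\mc^t_{k,\beta}$ has degree one less than $\m^t_{k,\beta}$, which shifts the sign in $\iota$; this is the same mechanism you describe via the $dt\wedge\sigma$ splitting.
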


Assuming this proposition we can complete the proof of Theorem \ref{intcomsub}.
\begin{cor}
 We can extend $\mF(L_1)^0$, $\mF(L_2)^0$ and $\mF(L_1\times L_2)^0$ to \Ai-algebras modulo $T^{E_1}$, denoted by $\mF(L_1)^{(1)}$, $\mF(L_2)^{(1)}$ and $\mF(L_1\times L_2)^{(1)}$, so that $\mF(L_1)^{(1)}$ and $\mF(L_2)^{(1)}$ are commuting subalgebras modulo $T^{E_1}$ of $\mF(L_1\times L_2)^{(1)}$
 
 Therefore we conclude that $\mF(L_1)$ and $\mF(L_2)$ are commuting subalgebras of $\mF(L_1\times L_2)$.
\end{cor}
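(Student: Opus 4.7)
The plan is to use the three pseudoisotopies provided by Proposition \ref{comsubisotopies} together with Fukaya's extension procedure from Theorem \ref{extendingmk} to extend the commuting subalgebra structure from modulo $T^{E_0}$ to modulo $T^{E_1}$, and then iterate over an increasing sequence of energies.

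First, I apply Theorem \ref{extendingmk} separately to each of the three pseudoisotopies $(\m^t_{k,\beta_1}, \mc^t_{k,\beta_1})$, $(\m^t_{k,\beta_2}, \mc^t_{k,\beta_2})$ and $(\m^t_{k,\beta_1\times \beta_2}, \mc^t_{k,\beta_1\times \beta_2})$, to obtain extensions $\mF(L_1)^{(1)}$, $\mF(L_2)^{(1)}$ and $\mF(L_1\times L_2)^{(1)}$ of $\mF(L_1)^0$, $\mF(L_2)^0$ and $\mF(L_1\times L_2)^0$ respectively, as \Ai-algebras modulo $T^{E_1}$. Along with these extensions the theorem produces extended pseudoisotopies modulo $T^{E_1}$ from $\mF(L_l)^{(1)}$ to $\mF(L_l)^1$, for $l=1,2$, and from $\mF(L_1\times L_2)^{(1)}$ to $\mF(L_1\times L_2)^1$, so the induction data is preserved for the next step.

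The main point is then to verify that $\mF(L_1)^{(1)}$ and $\mF(L_2)^{(1)}$ are commuting subalgebras modulo $T^{E_1}$ of $\mF(L_1\times L_2)^{(1)}$. For $\beta$ with $E(\beta)\leq E_0$ the required identities already hold by Proposition \ref{comsubalgmodte}. For $E(\beta)$ in the new range $(E_0,E_1]$, the formula \eq{pisotopyformula} expresses the extended $\m^\tau_{k,\beta}$ as $\m^1_{k,\beta}$ (which satisfies the commuting relations by Proposition \ref{comsubalgmodte} applied at energy $E_1$) plus integrals over $[\tau,1]$ of compositions of $\m^t$ and $\mc^t$ terms. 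The vanishing conditions in parts (a) and (b) of Proposition \ref{comsubisotopies} propagate through these integrals, since every nested composition appearing in an integrand involves at least one $\m^t$ or $\mc^t$ factor that vanishes on the relevant arguments, so the integral vanishes term by term. The compatibility identities in part (c) also propagate, because the $K$- and $\iota$-factorisations distribute over the finite sums and commute with the integration in $t$; a bookkeeping sign check confirms the signs of Definitions \ref{subalgebra} and \ref{comsubalg} are produced correctly (the $(-1)^{n_2}$ factor in the $\mc^t$-identities of Proposition \ref{comsubisotopies}(c) is precisely what is required to compensate the sign of $dt$ coming from fibre integration over $[0,1]$).

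With the extended \Ai-algebras in hand, I fix an increasing sequence $E_0<E_1<E_2<\ldots$ with $E_n\to\infty$, as in Section \ref{modte}, and iterate: at each stage the above argument extends the three \Ai-algebras modulo $T^{E_n}$ to modulo $T^{E_{n+1}}$ while preserving the commuting subalgebra structure, and composes the leftover pseudoisotopy modulo $T^{E_n}$ with the next pseudoisotopy provided by Proposition \ref{comsubisotopies} to feed the next induction step. Taking the limit, exactly as in the construction of $\mF(L)$ in Section \ref{modte}, yields full-fledged filtered \Ai-algebras $\mF(L_1)$, $\mF(L_2)$, $\mF(L_1\times L_2)$ together with the commuting subalgebra structure, completing the proof of Theorem \ref{intcomsub}. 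The main obstacle, beyond the already technical Proposition \ref{comsubisotopies}, is the careful sign bookkeeping when checking that the integral formula \eq{pisotopyformula} reproduces the signs in Definition \ref{comsubalg}(c), but this is reduced to the corresponding checks already carried out in the proof of Proposition \ref{comsubalgmodte}.
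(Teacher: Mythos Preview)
Your proposal is correct and follows essentially the same route as the paper: apply Theorem \ref{extendingmk} to each of the three pseudoisotopies from Proposition \ref{comsubisotopies}, use the explicit formula \eq{pisotopyformula} together with the relations (a)--(c) of Proposition \ref{comsubisotopies} to check that the commuting subalgebra identities persist for the extensions, then iterate over an increasing sequence of energies and pass to the limit. The paper's own proof is terser, simply asserting that combining \eq{pisotopyformula} with Proposition \ref{comsubisotopies} one ``can easily see'' the required relations, whereas you spell out how the vanishing and compatibility conditions propagate through the integrands and note the role of the $(-1)^{n_2}$ sign; but the underlying argument is the same.
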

\begin{proof}
From Proposition \ref{extendingmk} we know that we can use the pseudoisotopies to extend $\mF(L_1)^0$, $\mF(L_2)^0$ and $\mF(L_1\times L_2)^0$. Moreover we have a formula for the extensions given by \ref{pisotopyformula}. Combining this formula with the relations between the pseudoisotopies described in Proposition \ref{comsubisotopies}, we can easily see that the extensions to \Ai-algebras modulo $T^{E_1}$ satisfy the relations for commuting subalgebras in Definitions \ref{subalgebra} and \ref{comsubalg}.

Recall from Subsection \ref{modte}, that $\mF(L)$ is constructed by successively extending the \Ai-algebra modulo $T^{E_0}$, $\mF(L)^0$ to an \Ai-algebra modulo $T^{E_i}$ using the pseudoisotopies $(\m^t_{k,\beta}, \mc^t_{k,\beta})$. We have just shown that we can extend $\mF(L_1)^0$, $\mF(L_2)^0$ and $\mF(L_1\times L_2)^0$ so that $\mF(L_1)^{(i)}$ and $\mF(L_2)^{(i)}$ remain commuting subalgebras modulo $T^{E_i}$ of $\mF(L_1\times L_2)^{(i)}$. Thus in the limit, the \Ai-algebras $\mF(L_1)$ and $\mF(L_2)$ are commuting subalgebras of $\mF(L_1\times L_2)$.
\end{proof}

Now we are left with proving Proposition \ref{comsubisotopies}. The proof is similar to the one of Proposition \ref{comsubalgmodte}.

First note that there are natural maps 
$$\Pi_1:\mM^I_{k+1}(\beta_1\times\beta_2)\lto \mM^I_{k+1}(\beta_1) \ \ \text{and} \ \ \Pi_2:\mM^I_{k+1}(\beta_1\times\beta_2)\lto \mM^I_{k+1}(\beta_2)$$ 
given by $\Pi(t,(\Sigma,u))=(t,(\Sigma_i, u_i))$ where $\Sigma_i$ is obtained from $\Sigma$ by collapsing irreducible components that become unstable after forgetting the other component of $u$. Also note that there are natural maps $\forg_i:\mM^I_{k+1}(\beta) \lto \mM^I_{k}(\beta)$ that forget the i-th boundary marked point.  
Also note that $ev_t$ is a weak submersion, therefore the space $\mM^I_{k+1}(\beta_1)_{ev_t}\times_{ev_t}\mM^I_{k+1}(\beta_2)$ can be given a Kuranishi structure. Moreover the maps  $\Pi_l$ induce a map 
$$\Pi: \mM^I_{k+1}(\beta_1\times\beta_2) \lto \mM^I_{k+1}(\beta_1) \times_I \mM^I_{k+1}(\beta_2).$$
Using the notation from Definition \ref{projections}, we can define
$$P^I_{J,L}: \mM^I_{k+1}(\beta_1\times\beta_2) \lto \mM^I_{k-|L|+1}(\beta_1) \times_I \mM^I_{k-|J|+1}(\beta_2),$$
as the composition $(\forg_I \times_I \forg_J)\circ\Pi$ and 
$$Q^I_{i,I,J}: \mM^I_{k+2}(\beta_1\times\beta_2) \lto \mM^I_{k-|L|+2}(\beta_1) \times_I \mM^I_{k-|J|+2}(\beta_2),$$
as the composition $(\forg_J \times_I \forg_L)\circ\Pi$.

Now we have the analogue to Proposition \ref{compkuranishi}.
\begin{prop}
  There are Kuranishi structures on $\mM^I_{k}(\beta_1 \times \beta_2)$, $\mM^I_{k}(\beta_l)$, $(l=1,2)$ and $\mM^I_{p}(\beta_1)\times_I\mM^I_{q}(\beta_2)$ for $p,q,k \geq 1$ compatible with $P^I_{J,L}$, $Q^I_{i+1,J,L}$ and $\forg_j$. These Kuranishi structures respect the the decomposition of the boundary (\ref{boundaryparameter}) and the maps $Ev^I_0$, $ev^I_0$ are weak submersions. Moreover
  $$\mM^I_{k+1}(\beta_1\times 0)=(-1)^{n_2k}\mM^I_{k+1}(\beta_1)\times L_2,$$
$$\mM^I_{k+1}(0\times\beta_2)=(-1)^{n_1}L_1\times\mM^I_{k+1}(\beta_2).$$ 
\end{prop}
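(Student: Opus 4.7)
The plan is to reduce everything to the non-parametrized setting already handled by Propositions \ref{compkuranishi} and \ref{b2=0}, exploiting the definitional equality $\mM^I_k(\beta) = [0,1] \times \mM_k(\beta)$. Since each of the maps $\Pi_l$, $\forg_i$, $P^I_{J,L}$, $Q^I_{i+1,J,L}$, $Ev^I_0$, and $ev^I_0$ acts as the identity on the $[0,1]$-factor and restricts to its non-parametrized counterpart on the second factor, I would define the Kuranishi structures on $\mM^I_k(\beta)$ to be the product of the trivial Kuranishi structure on $[0,1]$ with the Kuranishi structures supplied by Proposition \ref{compkuranishi}. The fiber product $\mM^I_p(\beta_1)\times_I\mM^I_q(\beta_2)$ is a well-defined Kuranishi space because $ev_t$ is a genuine (weak) submersion onto $[0,1]$, and one checks that the product Kuranishi structure descends to the fiber product; equivalently, $\mM^I_p(\beta_1)\times_I\mM^I_q(\beta_2) = [0,1]\times \mM_p(\beta_1)\times \mM_q(\beta_2)$ carries its product structure.

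With these choices, compatibility of the Kuranishi structures in the sense of Definition \ref{kurcompatible} is inherited directly from Proposition \ref{compkuranishi}: each datum $(h_{pq},\varphi_{pq},\ldots)$ is simply the product of the identity on $[0,1]$ with the analogous datum in the non-parametrized setting. Weak submersivity of $Ev^I_0$ and $ev^I_0$ likewise follows, since a product of a weak submersion with the identity is a weak submersion.

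For the boundary decomposition \eq{boundaryparameter}, I would use the standard identity for oriented Kuranishi spaces with corners,
$$\partial([0,1]\times X) = \bigl(-\{0\}\times X\bigr) \cup \bigl(\{1\}\times X\bigr) \cup \bigl((-1)\cdot[0,1]\times \partial X\bigr),$$
applied to $X = \mM_{k+1}(\beta)$. The first two pieces give $\partial^-\mM^I_{k+1}(\beta) = -\mM^0_{k+1}(\beta)\cup \mM^1_{k+1}(\beta)$. For the third, substituting the decomposition \eq{boundary} yields $\partial^+\mM^I_{k+1}(\beta)$; the extra $(-1)$ coming from the product boundary rule combines with the sign $(-1)^{n+i(j+1)}$ of \eq{boundary} to produce the advertised $(-1)^{n+1+i(j+1)}$.

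The two oriented identifications then follow from Proposition \ref{b2=0} combined with the product structure. The first is immediate:
$$\mM^I_{k+1}(\beta_1\times 0) = [0,1]\times \mM_{k+1}(\beta_1\times 0) = (-1)^{n_2 k}\,[0,1]\times\mM_{k+1}(\beta_1)\times L_2 = (-1)^{n_2 k}\,\mM^I_{k+1}(\beta_1)\times L_2.$$
The second requires interchanging the $1$-dimensional factor $[0,1]$ with the $n_1$-dimensional factor $L_1$, which contributes the sign $(-1)^{n_1\cdot 1}$:
$$\mM^I_{k+1}(0\times\beta_2) = [0,1]\times L_1\times\mM_{k+1}(\beta_2) = (-1)^{n_1}\,L_1\times[0,1]\times\mM_{k+1}(\beta_2) = (-1)^{n_1}\,L_1\times\mM^I_{k+1}(\beta_2).$$
The main obstacle is the orientation bookkeeping in the boundary formula: once the product convention is fixed and matched against the conventions used in \S\ref{proof2}, the rest of the statement is a mechanical lift of the constructions in \S\ref{proof1} obtained by taking products with the identity on $[0,1]$.
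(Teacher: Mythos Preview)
Your proposal is correct and, in fact, more explicit than the paper: the paper simply says the proof is ``entirely analogous to the proof of Proposition \ref{compkuranishi}'' and omits all details. Your reduction via the product structure $\mM^I_k(\beta)=[0,1]\times\mM_k(\beta)$, together with the sign analysis for the boundary decomposition and for swapping the $[0,1]$ and $L_1$ factors, is exactly the content of that analogy made precise.
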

The proof is entirely analogous to the proof of Proposition \ref{compkuranishi}, so we do not repeat it. Next we have

\begin{prop}\label{compmultiparameter}
Fix $\epsilon, E> 0$ and let $S_0$ and $S_1$ be two compatible systems of multisections on $\mM_{k}(\beta_1 \times \beta_2)$, $\mM_{k}(\beta_1)$ for $l=1,2$ and $\mM_{p}(\beta_1)\times\mM_{q}(\beta_2)$ for $p,q,k \geq 1$, that satisfy the conditions of Proposition \ref{compmultisection}.
Then there exists a system of multisections $S$ on $\mM^I_{k}(\beta_1 \times \beta_2)$, $\mM^I_{k}(\beta_l)$, $(l=1,2)$ and $\mM^I_{p}(\beta_1)\times_I\mM^I_{q}(\beta_2)$, satisfying
\begin{itemize}
\setlength{\parsep}{0pt}
\setlength{\itemsep}{0pt}
\item[{\bf(a)}] $S\vert_{t=0}=S_0$ and $S\vert_{t=1}=S_1$;
\item[{\bf(b)}] they are transversal and $\epsilon-$close to the Kuranishi map;
\item[{\bf(c)}] they are compatible with the maps $P^I_{J,L}$, $Q^I_{i+1,J,L}$ and $\forg_j$;
\item[{\bf(d)}] $Ev^I_0\vert_{S^{-1}(0)}$ and $ev^I_0\vert_{S^{-1}(0)}$ are submersions;
\item[{\bf(e)}] they respect the boundary decomposition \eq{boundaryparameter}.
\end{itemize}
\end{prop}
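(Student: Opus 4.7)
The plan is to combine the inductive construction of Proposition \ref{compmultisection} with the parametric argument of Proposition \ref{oneparametersection}. We proceed by induction on $E(\beta_1)+E(\beta_2)\leq E$ and, at each energy level, in the order: first $\mM^I_1(\beta_l)$ for $l=1,2$, then the fiber product $\mM^I_1(\beta_1)\times_I \mM^I_1(\beta_2)$, then $\mM^I_1(\beta_1\times\beta_2)$, and finally pull everything back under the forgetful maps $\forg_j$ to obtain the multisections on $\mM^I_{k+1}(\beta_1\times\beta_2)$ and on $\mM^I_p(\beta_1)\times_I \mM^I_q(\beta_2)$.

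For each factor $\mM^I_1(\beta_l)$, we apply Proposition \ref{oneparametersection} directly to produce a continuous family of multisections parametrized by some $W_{q_l}$, interpolating between $S_0$ and $S_1$, transversal, $\epsilon$-small, compatible with the boundary decomposition \eqref{boundaryparameter}, and such that $ev^I_0$ is submersive on the zero locus. On the fiber product $\mM^I_1(\beta_1)\times_I \mM^I_1(\beta_2)$, each Kuranishi neighborhood is (by the construction in \S \ref{proof1}) a product of neighborhoods from the two factors; we take the external product $S_{q}=S_{q_1}\times S_{q_2}$ parametrized by $W_q = W_{q_1}\times W_{q_2}$, which is automatically transversal and makes $ev_0^1\times ev_0^2$ submersive on the fiber product over $[0,1]$. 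On $\mM^I_1(\beta_1\times\beta_2)$, we set $W_p = W_q$ and $S_p = S_q\circ \varphi^I_\Pi$, where $\varphi^I_\Pi$ is the parametric version of the projection map $\varphi_\Pi$ from \S \ref{proof1}. Because $Ev^I_0 = (ev_0^1\times ev_0^2)\circ\varphi^I_\Pi$ and $\varphi^I_\Pi$ is a submersion on each stratum, submersivity of $Ev^I_0|_{S^{-1}_p(0)}$ follows. Finally, compatibility with $\forg_j$ is built in by pulling back $S_p$ to $\mM^I_{k+1}(\beta_1\times\beta_2)$ and $\mM^I_p(\beta_1)\times_I \mM^I_q(\beta_2)$ via the forgetful maps, exactly as in \S \ref{proof3}.

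The one technical subtlety, inherited from \cite{Fuk}, is that the maps $\varphi^I_\Pi$ and those induced by $\forg_i$ are only continuous in general, smooth only on strata. To ensure that the pullback $S_q\circ\varphi^I_\Pi$ is a genuinely smooth multisection, the extension step in the induction must be performed so that all derivatives decay exponentially in the normal coordinate $T=1/y$ to each lower-dimensional stratum, i.e.\
\[
\Big|\frac{\partial^{k+l}S}{\partial T^k\, \partial x^l}\Big|<Ce^{-cT}.
\]
This condition is preserved by the local models $T=T_1+T_2$ that describe both $\forg_i$ and $\varphi^I_\Pi$ near a contracted component, so the pulled-back multisections are smooth. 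Since transversality and submersivity are already achieved stratum-by-stratum before the extension, and since exponential decay does not spoil either, the extension remains transversal with $Ev^I_0$ submersive.

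The principal obstacle is the inductive extension step itself, which must simultaneously fulfill three sets of constraints: agree with $S_0$ at $t=0$ and with $S_1$ at $t=1$; match the already-constructed multisections on the boundary strata coming from \eqref{boundaryparameter} (where lower-energy decompositions $\beta_1'+\beta_2'=\beta$ contribute); and be compatible with the multisections defined on the factors $\mM^I_1(\beta_l)$ and on products. Thanks to the inductive ordering above and the fact that all of these boundary data have been arranged to be mutually consistent at the previous step (either by hypothesis for $S_0,S_1$, or by the product/pullback construction at smaller energy), the data on $\partial^+\mM^I_1(\beta_1\times\beta_2)\cup\partial^-\mM^I_1(\beta_1\times\beta_2)$ form a compatible partial multisection, and Fukaya's extension lemma \cite[Lemma 5.4]{Fuk} (in its parametric form, analogous to Proposition \ref{oneparametersection}) yields the required extension to the interior. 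Once this is done at the bottom level $\mM^I_1(\beta_1\times\beta_2)$, the pullbacks under $\forg_j$ automatically inherit all compatibilities, completing the inductive step.
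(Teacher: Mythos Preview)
Your proposal is correct and follows precisely the approach the paper intends: the paper itself omits the proof, stating only that it ``is analogous to the proof of Proposition \ref{compmultisection},'' and your argument carries out exactly that analogy, combining the inductive scheme of \S\ref{proof3} with the parametric interpolation of Proposition \ref{oneparametersection}. The details you supply---the ordering of the induction, the product construction on the fiber products over $I$, the pullback via $\varphi^I_\Pi$, and the exponential decay condition ensuring smoothness under the non-smooth forgetful maps---are all faithful to the constructions in \S\ref{proof1} and \S\ref{proof3}.
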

Once again the proof of this proposition is analogous to the proof of Proposition \ref{compmultisection} so we omit it. We are now ready to prove Proposition \ref{comsubisotopies}.

\begin{proof}[Proof of Proposition \ref{comsubisotopies}]
We use the systems of multisections provided by Proposition \ref{compmultiparameter} to construct the maps $\m^I_{k,\beta_1\times \beta_2}$ and $\m^I_{k,\beta_l}$, for $l=1,2$, as in Proposition \ref{ainfinityI}. Then we use these to define the pseudoisotopies  $(\m^t_{k,\beta_1\times \beta_2}, \mc^t_{k,\beta_1\times \beta_2})$ and $(\m^t_{k,\beta_l}, \mc^t_{k,\beta_l})$ as in the formula (\ref{mkt}).

Following the proof of Proposition \ref{b1b2nonzero}, we can show, using compatibility with $P^I_{I,J}$ and $Q^I_{i,I,J}$ that
$$\m^I_{k,\beta_1\times \beta_2}(\xi_1,\ldots,\xi_k)=0,$$
$$\m^t_{k+1,\beta_1\times \beta_2}(\xi_1,\ldots,\xi_{i},\xi,\xi_{i},\ldots,\xi_k)=0,$$
when $\beta_1,\beta_2\neq0$. 
Next, following the proof of Proposition \ref{mkb10=0}, we show that, if $\xi_i=p_2^*(b_i)$, then
 $$\m^I_{k,\beta_1\times 0}(\xi_1,\ldots, \xi_k)=0,$$
 and if $\xi_i=p_1^*(a_i)$, then
 $$\m^I_{k,0\times \beta_2}(\xi_1,\ldots, \xi_k)=0.$$
Thus we have proved the first two conditions in Proposition \ref{comsubisotopies}. To prove the last one we follow the proof of Proposition \ref{comsubalgmodte}. There is only one difference in the sign of the formulas involving $\mc^t_{k,\beta}$. The reason for this is that $\mc^t_{k,\beta}$ has degree $k+1 \pmod 2$, unlike $\m^t_{k,\beta}$ which has degree $k+1 \pmod 2$, then by the definition of $\iota$ and $K$ we pick up the extra sign $(-1)^{n_2}$.
\end{proof}

\appendix\section{Smooth correspondences}
In this section, we first define fiber integration and collect several useful properties it satisfies. Then we review the construction and some of the properties of a smooth correspondence on a space with a good coordinate system and a continuous family of multisections, following \cite{Fuk} and \cite{FOOOt2}. See also \cite{FOOOnew}. Finally we prove Proposition \ref{kurvanishingpi*}.

\subsection{Fiber integration}
We start with the definition of \emph{fiber integration} on smooth manifolds.
\begin{defn}\label{fiberinteg}
Let $M$ and $N$ be smooth oriented manifolds, let $\pi:M^{n+k}\lto N^n$ be a submersion and $\alpha\in \Omega_c^m(M)$ a compactly supported $m$-form in $M$. We define the \emph{fiber integration} $\pi_*\a$ as the only $(m-k)$-form in $N$ that satisfies:
$$\int_M\a\wedge\pi^*\beta=\int_N\pi_*\a\wedge\beta,\ \ \forall\ \beta\in\Omega_c^{n-m+k}(N).$$
\end{defn} 
If  we take local coordinates $(t_1,\ldots,t_k,x_1,\ldots,x_n)$ in $M$ such that $(x_1,\ldots,x_n)$ are (oriented) coordinates in $N$ and $\pi(t_1,\ldots,t_k,x_1,\ldots,x_n)=(x_1,\ldots,x_n)$, $\pi_*\a$ can be described as follows. If $\a=f_I(x,t)dt_1\wedge\ldots \wedge dt_k\wedge dx_I+\ldots$ where all the other terms involve less then $k$ wedges of $dt_i$, we can check that
$$\pi_*\a=\left(\int f_I(x,t)dt_1\ldots dt_k\right)dx_I.$$
\begin{rmk}\label{rmkpi*}
In \cit{GuiSte} the definition of $\pi_*$ differs from ours by a sign. This happens because the order of $\a$ and $\beta$ in the definition of $\pi_*$ is reversed.
\end{rmk}

We will now state a few useful properties of $\pi_*$. 
\begin{prop}\label{proppi*}
 \begin{itemize}
\setlength{\parsep}{0pt}
\setlength{\itemsep}{0pt}
\item[{\bf(a)}] If $f$ and $g$ are submersions, then $(g\circ f)_*(\a)=g_*(f_*(\a)).$
\item[{\bf(b)}] If $\a\in\Omega^*_c(M)$ and $\gamma\in\Omega^*(N)$, then $\pi_*(\a\wedge\pi^*\gamma)=\pi_*\a\wedge\gamma$ and $\pi_*(\pi^*\gamma\wedge\a)=(-1)^{|\gamma|k}\gamma\wedge\pi_*\a$. 
\item[{\bf(c)}] Suppose we have the commutative square of maps
\begin{displaymath}
\xymatrix{ M\times_NN_1 \ar[r]^{\ \ \ \pi_2} \ar[d]_{\pi_1}& N_1\ar[d]^{g} \\
M\ar[r]^{\pi} & N.}
\end{displaymath} Then for $\a\in\Omega_c^*(M)$, $g^*\pi_*\a=(\pi_2)_*\pi_1^*\a$. 
\item[{\bf(d)}] Let $\pi_i: M_i^{n_i+k_i}\lto N_i^{n_i}$ be submersions and let $\rho_i\in\Omega^*(M_i)$ for $i=1,2$. Then 
$$(\pi_1\times \pi_2)_*(\rho_1\times\rho_2)=(-1)^{k_2(n_1+k_1+ |\rho_1|)}(\pi_1)_*\rho_1\times(\pi_2)_*\rho_2.$$
\end{itemize}
\end{prop}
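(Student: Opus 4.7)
The plan is to verify each of the four identities by reducing to a local computation on coordinate charts. All four statements are local on the target, so using a partition of unity on $N$ (respectively $N_1 \times N_2$ in (d)) I can work in coordinate charts $(t_1,\ldots,t_k,x_1,\ldots,x_n)$ on $M$ in which $\pi$ is the projection $(t,x)\mapsto x$ and $(x_1,\ldots,x_n)$ are oriented coordinates on $N$. In such charts the explicit formula recorded after Definition \ref{fiberinteg} reduces fiber integration to ordinary integration of the coefficient functions along the $t$-directions.

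For (a), I will choose adapted charts for both $f: M\lto N$ and $g: N\lto P$ simultaneously, so that $f$ becomes $(s,t,x)\mapsto(t,x)$, $g$ becomes $(t,x)\mapsto x$, and $g\circ f$ becomes $(s,t,x)\mapsto x$. The equality $(g\circ f)_*=g_*\circ f_*$ then reduces to Fubini's theorem for iterated integration over $s$ and then $t$. For (b), I write $\a$ locally as $\sum_I f_I(x,t)\,dt_1\wedge\cdots\wedge dt_k\wedge dx_I$ plus terms with fewer $dt$'s; only the summand with the complete $dt_1\wedge\cdots\wedge dt_k$ contributes to $\pi_*$. The first identity then follows by direct computation, and the second picks up the Koszul sign $(-1)^{|\gamma|k}$ from commuting $\pi^*\gamma$ past $dt_1\wedge\cdots\wedge dt_k$. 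For (c), I work with coordinates $(t,x)$ on $M$ adapted to $\pi$, and take $(t,y)$ as local coordinates on $M\times_N N_1$, where $y$ is a coordinate near a point in $N_1$; then $\pi_1$ becomes $(t,y)\mapsto(t,g(y))$ and $\pi_2$ becomes projection to $y$, and both sides of the identity integrate the same coefficient function over the $t$-fiber.

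For (d) the cleanest approach is not via coordinates but via the defining integral identity paired against a test form. By bilinearity of both sides, it suffices to test against a decomposable form $\beta=\beta_1\times\beta_2$ on $N_1\times N_2$. Expanding $(\rho_1\times\rho_2)\wedge(\pi_1\times\pi_2)^*\beta$ on $M_1\times M_2$ using the Koszul sign rule to commute $p_2^*\rho_2$ past $p_1^*\pi_1^*\beta_1$ produces a sign $(-1)^{|\rho_2||\beta_1|}$, after which Fubini on $M_1\times M_2$ factors the integral as $\bigl(\int_{M_1}\rho_1\wedge\pi_1^*\beta_1\bigr)\bigl(\int_{M_2}\rho_2\wedge\pi_2^*\beta_2\bigr)$ and the defining property of $(\pi_i)_*$ is applied to each factor. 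Expanding the right-hand side of (d) analogously, the comparison of signs together with the relation $|\beta_1|=n_1+k_1-|\rho_1|$ (forced by dimension for a nonzero integral) produces exactly $(-1)^{k_2(n_1+k_1+|\rho_1|)}$.

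The main obstacle is purely sign bookkeeping under the Koszul rule, especially in (d) where the degrees of $\rho_1$ and $\rho_2$ interact with the dimensions $n_i$ and $k_i$; no analytic input beyond Fubini and the local formula for $\pi_*$ is required.
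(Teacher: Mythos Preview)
Your proposal is correct and matches the paper's approach. For (a)--(c) the paper simply refers to \cite[Section 10.1]{GuiSte} as elementary, and your local coordinate arguments are exactly the standard ones found there; for (d) the paper carries out precisely the computation you outline---testing against a decomposable form $\beta_1\times\beta_2$, commuting $\rho_2$ past $\pi_1^*\beta_1$ with the Koszul sign $(-1)^{|\rho_2|(n_1+k_1-|\rho_1|)}$, applying Fubini and the defining property of each $(\pi_i)_*$, and then comparing signs on both sides.
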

\begin{proof}
The proofs of the first three statements are elementary and can be found in \cite[Section 10.1]{GuiSte}. For the last statement we compute
\begin{align}
\int_{M_1\times M_2}&\rho_1\times\rho_2\wedge(\pi_1\times \pi_2)^*(\beta_1\times\beta_2)\nonumber\\
&=\int_{M_1\times M_2}(-1)^{|\rho_2|(n_1+k_1-|\rho_1|)}\rho_1\wedge \pi_1^*\beta_1\wedge\rho_2\wedge \pi_2^*\beta_2\nonumber\\
&=(-1)^{|\rho_2|(n_1+k_1-|\rho_1|)}\int_{M_1}\rho_1\wedge \pi_1^*\beta_1\cdot\int_{M_2}\rho_2\wedge \pi_2^*\beta_2\nonumber\\
&=(-1)^{|\rho_2|(n_1+k_1-|\rho_1|)}\int_{N_1}(\pi_1)_*\rho_1\wedge\beta_1\cdot\int_{N_2}(\pi_2)_*\rho_2\wedge\beta_2\nonumber\\
&=(-1)^{|\rho_2|(n_1+k_1-|\rho_1|)}\int_{N_1\times N_2}(\pi_1)_*\rho_1\wedge\beta_1\wedge(\pi_2)_*\rho_2\wedge\beta_2\nonumber\\
&=(-1)^{|\rho_2|(n_1+k_1-|\rho_1|)}\int_{N_1\times N_2}(-1)^{(n+k_1-|\rho_1|)(\rho_2-k_2)}(\pi_1)_*\rho_1\wedge(\pi_2)_*\rho_2\wedge\beta_1\wedge\beta_2.\nonumber
\end{align}
So we conclude
$$(\pi_1\times \pi_2)_*(\rho_1\times\rho_2)=(-1)^{k_2(n_1+k_1-|\rho_1|)}(\pi_1)_*\rho_1\times(\pi_2)_*\rho_2$$.
\end{proof}
\begin{prop}\label{vanishingpi*}
 Consider maps $f:M\lto M_1$ and $g:M_1\lto N$ such that $\pi=g\circ f$ is a submersion. If $\dim M_1<\dim M$, then $\pi_*(f^*\a)=0$ for $\a\in\Omega^*_c(M_1)$.
\end{prop}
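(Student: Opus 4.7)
The plan is to reduce the statement to a purely local coordinate computation using the submersion normal form. First I would observe that the hypothesis ``$\pi=g\circ f$ is a submersion'' forces $g$ to be a submersion at every point of the image of $f$: $d\pi_p = dg_{f(p)}\circ df_p$ is surjective for every $p\in M$, so $dg_{f(p)}$ is surjective, which is all that is needed for what follows (and since the pushforward is defined by a local formula, the argument can be carried out near an arbitrary $p\in M$).

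Fix $p\in M$, set $q=f(p)\in M_1$, $y=\pi(p)\in N$, and write $k=\dim M-\dim N$, $r=\dim M_1-\dim N$, so the hypothesis $\dim M_1<\dim M$ becomes $r<k$. Using the submersion normal form for $g$, I would pick coordinates $(s_1,\ldots,s_r,x_1,\ldots,x_n)$ on $M_1$ near $q$ and $(x_1,\ldots,x_n)$ on $N$ near $y$ such that $g(s,x)=x$. Then the $n$ functions $x_i\circ\pi = x_i\circ g\circ f$ on $M$ have linearly independent differentials at $p$, so I can complete them to local coordinates $(t_1,\ldots,t_k,x_1,\ldots,x_n)$ on $M$. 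In these coordinates $\pi(t,x)=x$ on the nose, and $f$ automatically takes the form $f(t,x)=(f'(t,x),x)$ for some smooth $f'\colon\mathbb{R}^{k+n}\to\mathbb{R}^r$.

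The final step is a purely formal computation. In these coordinates, $f^*\alpha$ is a sum of monomials
\[
h(t,x)\, df'_{j_1}\wedge\cdots\wedge df'_{j_a}\wedge dx_I,\qquad a\leq r,
\]
and each $df'_j=\sum_i(\partial f'_j/\partial t_i)\,dt_i+\sum_i(\partial f'_j/\partial x_i)\,dx_i$ contributes at most one $dt$-factor. Hence no monomial in $f^*\alpha$ contains more than $r<k$ $dt$-factors. The local formula for fibre integration recalled after Definition \ref{fiberinteg} picks out precisely the coefficient of $dt_1\wedge\cdots\wedge dt_k$ in each monomial; since every such coefficient vanishes, $\pi_*(f^*\alpha)=0$ on this coordinate patch. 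A partition of unity on $M$ subordinate to such adapted charts then gives the global vanishing.

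No step is genuinely difficult; the only thing requiring a little care is the coherent choice of coordinates on $M$, $M_1$ and $N$ so that $\pi$, $g$ and $f$ all take the normal form \emph{simultaneously}, which is why I first apply the submersion theorem to $g$ and then use the components of $x\circ\pi$ as part of the coordinate system on $M$.
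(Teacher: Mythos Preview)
Your proof is correct and follows essentially the same approach as the paper's: choose adapted local coordinates in which $\pi$ and $g$ are projections and $f(t,x)=(f'(t,x),x)$, then observe that $f^*\alpha$ can involve at most $r<k$ of the $dt_i$'s so the fibre integral vanishes by the local formula. The only difference is that you spell out more carefully why such coordinates exist (first noting that $g$ is a submersion on the image of $f$, then applying the submersion normal form), whereas the paper simply asserts the coordinate description.
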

\begin{proof}
Locally we can find coordinates $(t_1,\ldots,t_k,x_1,\ldots,x_n)$ in $M$, $(s_1,\ldots,s_l,x_1,\ldots,x_n)$ in $M_1$ such that $(x_1,\ldots,x_n)$ are coordinates in $N$. Moreover $g(s_1,\ldots,s_l,x_1,\ldots,x_n)=(x_1,\ldots,x_n)$ and $f(t_1,\ldots,t_k,x_1,\ldots,x_n)=(f_1,\ldots,f_l,x_1,\ldots,x_n)$ for some local functions on $M$ $f_1,\ldots,f_l$. By assumption $l<k$, therefore in these coordinates $f^*\a$ does not have a summand involving $dt_1\wedge \ldots \wedge dt_k$. Then by definition $\pi_*(f^*\a)=0$.
\end{proof}

\begin{rmk}\label{vannotsmooth}
 The previous proposition has a generalization that will be useful later. Suppose that $f$ is a continuous map that is smooth only on an open, dense subset of $M$ and there are smooth maps $u$ and $v$ such that $u=v \circ f$. Then $\pi_*(u^*\beta)=0$. This happens since we can carry out the above argument in the open, dense subset where $f$ is smooth to conclude that $u^*\beta$ does not have a summand involving $dt_1\wedge \ldots \wedge dt_k$, on this subset. Continuity then implies this holds everywhere in $M$.
\end{rmk}

We need one additional property of fiber integration.
\begin{prop}\label{stokes}
 Let $M$ and $N$ be smooth oriented manifolds and let $\pi:M^{n+k}\lto N^n$ be a submersion. If $\partial N=\emptyset$, then
$$d\pi_*\a+(-1)^{k+1}\pi_*(d\a)=(-1)^{k+1}(\pi\vert_{\partial M})_*\a.$$
If $\partial N \neq \emptyset$, then we take the decomposition $\partial M=\partial ^+ M\cup\partial ^- M$ where $\partial ^-M=\pi^{-1}(\partial N)$. We have the following
$$d\pi_*\alpha+(-1)^{k+1}\pi_*d\a=(-1)^{k+1}(\pi\vert_{\partial^+M})_*\a.$$
\end{prop}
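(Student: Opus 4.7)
The plan is to reduce this identity to ordinary Stokes' theorem applied on $M$ and on $N$, tested against an arbitrary compactly supported form on $N$, using the defining property of $\pi_*$ from Definition \ref{fiberinteg}. Write $\alpha\in\Omega^m_c(M)$ and pick $\beta\in\Omega^{n-m+k-1}_c(N)$, chosen so that $\mathrm{supp}(\beta)$ is disjoint from $\partial N$ (which is automatic when $\partial N=\emptyset$). The target formula is equivalent, after pairing with $\beta$, to
\begin{equation*}
\int_N \bigl(d\pi_*\alpha + (-1)^{k+1}\pi_*(d\alpha)\bigr)\wedge \beta \;=\; (-1)^{k+1}\int_N (\pi|_{\partial^+ M})_*\alpha \wedge \beta,
\end{equation*}
and since such $\beta$ separate forms on the interior of $N$, proving this integrated version will suffice.

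First I would compute $\int_N d\pi_*\alpha\wedge\beta$ by applying Stokes on $N$ to $d(\pi_*\alpha\wedge\beta)=d\pi_*\alpha\wedge\beta+(-1)^{m-k}\pi_*\alpha\wedge d\beta$; since $\beta$ has compact support in the interior of $N$, the boundary contribution vanishes, yielding
\begin{equation*}
\int_N d\pi_*\alpha\wedge\beta = -(-1)^{m-k}\int_N \pi_*\alpha \wedge d\beta \;=\; -(-1)^{m-k}\int_M \alpha\wedge d\pi^*\beta,
\end{equation*}
where the second equality uses the definition of $\pi_*$ and the fact that $\pi^*$ commutes with $d$. Next I would apply Stokes on $M$ to $d(\alpha\wedge\pi^*\beta)=d\alpha\wedge\pi^*\beta+(-1)^m\alpha\wedge d\pi^*\beta$; crucially, the support condition on $\beta$ ensures $\pi^*\beta$ vanishes on $\partial^- M=\pi^{-1}(\partial N)$, so the boundary integral only picks up the $\partial^+ M$ piece. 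Using the definition of $\pi_*$ twice, this rearranges to
\begin{equation*}
\int_M \alpha\wedge d\pi^*\beta = (-1)^m\int_N (\pi|_{\partial^+ M})_*\alpha\wedge\beta - (-1)^m\int_N \pi_*(d\alpha)\wedge\beta.
\end{equation*}

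Substituting this into the previous expression and simplifying the signs $(-1)^{m-k}\cdot(-1)^m=(-1)^{2m-k}=(-1)^{-k}=(-1)^k$ gives exactly
\begin{equation*}
\int_N d\pi_*\alpha\wedge\beta = (-1)^{k+1}\int_N (\pi|_{\partial^+ M})_*\alpha\wedge\beta + (-1)^k\int_N \pi_*(d\alpha)\wedge\beta,
\end{equation*}
which is the desired identity. The case $\partial N=\emptyset$ is the same argument with $\partial^+ M=\partial M$, and no support restriction on $\beta$ is needed. The only subtle point is the bookkeeping of signs through the two Leibniz rules and the placement of $\alpha,\beta$, so I would be careful to track the degrees $|\pi_*\alpha|=m-k$ and $|\alpha|=m$ separately; once this is done, the computation is mechanical, and the essential geometric input is only that $\pi^*\beta$ vanishes on $\partial^- M$ for $\beta$ supported away from $\partial N$.
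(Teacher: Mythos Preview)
Your proof is correct and follows essentially the same approach as the paper: both test the identity against an arbitrary $\beta$ and apply Stokes' theorem on $M$ and on $N$, unwinding the defining property of $\pi_*$. The only cosmetic difference is that the paper allows $\beta$ to meet $\partial N$ and then observes the cancellation $\int_{\partial^- M}\alpha\wedge\pi^*\beta=\int_{\partial N}\pi_*\alpha\wedge\beta$, whereas you sidestep this by taking $\beta$ supported in the interior of $N$; your continuity remark that interior testing suffices is fine.
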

\begin{proof}
The proof of both statements is similar, we prove just the second one. We compute
\begin{align}
\int_Md\a\wedge\pi^*\beta&=\int_Md(\a\wedge\pi^*\beta)+(-1)^{|\a|+1}\a\wedge\pi^*d\beta\nonumber\\
&=\int_{\partial M}\a\wedge\pi^*\beta+(-1)^{|\a|+1}\int_M\a\wedge\pi^*d\beta\nonumber\\
&=\int_{\partial^- M}\a\wedge\pi^*\beta+\int_{\partial^+ M}\a\wedge\pi^*\beta+(-1)^{|\a|+1}\int_N\pi_*\a\wedge d\beta\nonumber\\
&=\int_{\partial^- M}\a\wedge\pi^*\beta+\int_N(\pi\vert_{\partial^+M})_*\a\wedge\beta+(-1)^{|\a|+1 +|\a|-k+1}\int_Nd(\pi_*\a)\wedge\beta+\nonumber\\
&\hspace{4cm}+(-1)^{|\a|+1+|\a|}\int_Nd(\pi_*\a\wedge d\beta)\nonumber\\
&=\int_{\partial^- M}\a\wedge\pi^*\beta+\int_N(\pi\vert_{\partial^+M})_*\a\wedge\beta+(-1)^k\int_Nd\pi_*\a\wedge\beta-\int_{\partial N}\pi_*\a\wedge\beta\nonumber\\
&=\int_N\left[(\pi\vert_{\partial^+M})_*\a+(-1)^k\pi_*\a\right]\wedge\beta,\nonumber
\end{align}since $\int_{\partial^-M}\a\wedge\pi^*\beta=\int_{\partial N}\pi_*\a\wedge\beta$.
\end{proof}

\subsection{Smooth correspondences}\label{smoothcorr}
In this subsection we review the construction of a smooth correspondence, mostly following \cite{FOOOt2}. The only difference is that we require that all the auxiliary spaces parameterizing multisections are even dimensional. This is always possible and simplifies a lot of sign considerations. 

Let $X$ be a space with an oriented Kuranishi structure and a good coordinate system $\{(V_\a, E_\a,\Gamma_\a,\psi_\a,s_\a)\}_{\a\in I}$.
For each $\a$, let $S^l(E_\a)$ be the quotient of the vector bundle $\bigoplus_{j=1}^l E_\a$ by the action of the symmetric group in $l$ elements. There are natural maps
$$t_m:S^l(E_\a)\lto S^{lm}(E_\alpha), \ t_m[a_1\ldots a_l]=[\underbrace{a_1\ldots a_1}_{m},\ldots,\underbrace{a_l\ldots a_l}_{m}].$$
\begin{defn}\label{multi} A \emph{multisection} of $(E_\alpha\lto V_\alpha,\Gamma_\a)$ consists of an open cover $V_\a=\bigcup_{i\in A} U_i$ and a of $S^{l_i}(E_\alpha)\vert_{U_i}$, $s_i$,  satisfying: 
 \begin{itemize}
\setlength{\parsep}{0pt}
\setlength{\itemsep}{0pt}
\item[{\bf(a)}] $U_i$ is $\Gamma_\a$ -invariant and $s_i$ is $\Gamma_\a$ -equivariant,
\item[{\bf(b)}] if $x\in U_i\cap U_j$, then $t_{l_j}(s_i(x))=t_{l_i}(s_j(x))\in S^{l_il_j}(E_\a)$,
\item[{\bf(c)}] for each $x$, there is a smooth section $\tilde s_i=(\tilde s_{i,1}\times\ldots\times \tilde s_{i,l_i})$ of $E^{\oplus l_i}$ on a neighborhood of $x$ that represents $s_i$, i.e.
$$[\tilde s_i(y)]=[(\tilde s_{i,1}(y),\ldots,\tilde s_{i,l_i}(y))]=s_i(y)$$
(we call $\tilde s_{i,j}$ a branch of $s_i$).
\end{itemize} We identify two multisections $(U_i,s_i)_i, (U'_j,s'_j)_j$,  if $t_{l'_j}(s_i(x))=t_{l_i}(s'_j(x))\in S^{l_il'_j}(E_\a)$, for $x\in U_i\cap U'_j$.
\end{defn}

Next, we review the notion of \emph{continuous family of multisections on $X$}. Let $W_\a$ be a smooth oriented manifold of even dimension and let $\theta_\a$ be a compactly supported volume form such that $\int_{W_\a}\theta_\a=1$. Consider the pull-back bundle $\pi_\a^*E_\a\lto W_\a \times V_\a$, under the projection $\pi_\a:W_\a \times V_\a\lto V_\a$. We extend the action of $\Gamma_\a$ to $\pi_\a^*E_\a\lto W_\a \times V_\a$ by making it act trivially on $W_\a$.
\begin{defn}\label{paramulti}
A \emph{$W_\a$ -parametrized family $S_\a$ of multisections} is a multisection of $\pi_\a^*E_\a$. We say $S_\a$ is $\epsilon$-small if (after fixing a metric on $E_\a$) for each branch $S_{\a,i,j}$ of $S_\a$ we have $$\vert S_{\a,i,j}(w,\cdot)-s_\a(\cdot)\vert_{C^0}<\epsilon,\ \textrm{for each }w\in W_\a.$$
Finally, we say $S_\a$ is \emph{transversal} if each branch $S_{\a,i,j}$ is transversal to the zero-section of $\pi_\a^*E_\a$.
\end{defn}
Let $f_\a:V_\a\lto M$ be a $\Gamma_\a$ -invariant smooth map and assume $V_\a$ has a transversal multisection $S_\a$. We say $f_\a\vert_{S^{-1}_\a(0)}$ is a \emph{submersion} if for each branch  $S_{\a,i,j}$ of $S_\a$ the restriction $$f_\a\circ\pi_\a:S^{-1}_{\a,i,j}(0)\lto M$$ is a submersion. If $V_\a$ has corners, we require that the restriction of the above map to any boundary stratum is a submersion. With these definitions we have the following:
\begin{lem}[{\cite[Lemma 12.4]{FOOOt2}}]\label{loctransvsect}
If $f_\a:V_\a\lto M$ is a submersion, then there exists $W_\a$ and a $W_\a$ -parametrized family $S_\a$ of transversal multisections, $\epsilon$-small, such that $f_\a\vert_{S^{-1}_\a(0)}$ is a submersion.

If $S_\a$, satisfying these conclusions, is already defined on the neighborhood of a $\Gamma_\a$ -invariant compact subset $K_\a\subseteq V_\a$, then it can be extended to $V_\a$.
\end{lem}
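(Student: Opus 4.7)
The plan is to construct $S_\a$ as a finite-dimensional linear perturbation of $s_\a$ by a family of local sections that span $E_\a$ at every point, then $\Gamma_\a$-average to obtain an equivariant multisection.

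First I would cover $V_\a$ by finitely many $\Gamma_\a$-invariant open sets $U_1,\ldots,U_k$ over which $E_\a$ is trivial, and on each $U_i$ pick smooth compactly supported sections $\sigma_{i,1},\ldots,\sigma_{i,N_i}$ of $E_\a$ whose values span $E_\a|_v$ at every $v\in U_i$ (e.g.\ a local frame multiplied by a bump function). Orbits under $\Gamma_\a$ turn each $\sigma_{i,j}$ into a multisection $T_{i,j}=\{\gamma\cdot\sigma_{i,j}:\gamma\in\Gamma_\a\}$ that is manifestly $\Gamma_\a$-equivariant. Let $N=\sum_iN_i$ and let $W_\a\subset\mathbb{R}^{2N}$ be a small open ball around $0$ (the factor of $2$ ensures $\dim W_\a$ is even), equipped with its standard orientation and a compactly supported normalized volume form $\theta_\a$. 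Define the $W_\a$-parametrized multisection $S_\a$ whose branches are
$$S_{\a;\underline{\gamma}}(w,v)=s_\a(v)+\sum_{i,j}w_{i,j}\,(\gamma_{i,j}\cdot\sigma_{i,j})(v),\qquad \underline{\gamma}=(\gamma_{i,j})\in\Gamma_\a^{N}.$$

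The crucial observation is that by the spanning property, the partial differential $d_wS_{\a;\underline{\gamma}}(w,v):T_wW_\a\to E_\a|_v$ is surjective at every $(w,v)$. This immediately gives \textbf{(i)} transversality of each branch of $S_\a$ to the zero section, so $S_{\a;\underline{\gamma}}^{-1}(0)\subset W_\a\times V_\a$ is a smooth submanifold, and \textbf{(ii)} the submersion property of $f_\a\circ\pi_\a$ on this submanifold: given $\xi\in T_{f_\a(v)}M$, lift to $\delta v\in T_vV_\a$ via submersivity of $f_\a$, then solve $d_wS\cdot\delta w=-d_vS\cdot\delta v$ using surjectivity of $d_wS$; the pair $(\delta w,\delta v)$ lies in $\ker dS=T_{(w,v)}S_{\a;\underline{\gamma}}^{-1}(0)$ and projects to $\xi$. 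The same argument applied stratum-wise takes care of the corners since the $\sigma_{i,j}$ span $E_\a$ along every stratum automatically. Shrinking the radius of $W_\a$ makes the perturbation $C^0$-small, securing the $\epsilon$-smallness.

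For the extension statement, let $S_\a^{\rm old}$ be defined on a $\Gamma_\a$-invariant open neighborhood $U$ of $K_\a$, and choose a $\Gamma_\a$-invariant cutoff $\chi:V_\a\to[0,1]$ equal to $1$ on a smaller neighborhood of $K_\a$ and supported in $U$. Apply the construction above to produce $S'_\a$ on all of $V_\a$ parametrized by $W'_\a$. Take $W_\a=W_\a^{\rm old}\times W'_\a$ and define
$$\tilde S_\a(w^{\rm old},w',v)=\chi(v)\,S_\a^{\rm old}(w^{\rm old},v)+(1-\chi(v))\,s_\a(v)+\bigl(S'_\a(w',v)-s_\a(v)\bigr),$$
which agrees with $S_\a^{\rm old}+(S'_\a-s_\a)$ where $\chi=1$ and equals $S'_\a$ off the support of $\chi$. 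Surjectivity of $d_{w'}\tilde S_\a$ persists everywhere, so transversality and submersion of the evaluation are preserved, and by further shrinking $W'_\a$ the zero set near $K_\a$ stays $C^1$-close to that of $S_\a^{\rm old}$, so the properties imposed there are not lost.

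The main obstacle I anticipate is bookkeeping for multisections as opposed to sections: defining the ``sum'' of the branch families and verifying the compatibility condition $t_{l_j}(s_i(x))=t_{l_i}(s_j(x))$ on overlaps of the open cover (Definition~\ref{multi}), together with consistent indexing of branches under the $\Gamma_\a$-action when two orbits coincide, requires careful combinatorics. This is the step where the extra flexibility of multisections over sections is genuinely needed, and where the cited argument of Fukaya--Oh--Ohta--Ono does the real work.
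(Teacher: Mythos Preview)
The paper does not give its own proof of this lemma: it is simply quoted, with attribution, as \cite[Lemma 12.4]{FOOOt2}, and the text immediately moves on. So there is no argument in the paper to compare your proposal against.

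That said, your sketch is the standard construction and is essentially what one finds in the cited reference: perturb $s_\a$ linearly by a finite family of local sections whose values span the fiber of $E_\a$ pointwise, take the $\Gamma_\a$-orbit to obtain an equivariant multisection, and observe that surjectivity of $d_wS_\a$ onto $E_{\a}|_v$ simultaneously forces transversality of each branch and the submersivity of $f_\a\circ\pi_\a$ on the zero set. Your relative version via a cutoff and enlarged parameter space is also the standard device. One small point to tighten: you assert a \emph{finite} cover of $V_\a$ by trivializing opens, which requires some compactness; in practice one works over a relatively compact piece of $V_\a$ (or exhausts by compacta), and this should be stated. The combinatorial bookkeeping for multisections that you flag at the end is indeed where the care is needed, but it is routine and handled in the cited source.
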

The multisections on different Kuranishi neighborhoods are required to satisfy several compatibility relations. We omit them, but they can be found in \cite{FOOOt2}. 

\begin{defn}
 Let $Y$ be a smooth manifold. A continuous map $f:X\lto Y$ is said to be \emph{smooth strongly continuous} if there is a family of $\Gamma_\a$ -invariant smooth maps $f_\a:V_\a\lto Y$ such that $f_{\a}\circ\phi_{\a \beta}=f_{\beta}$, inducing $f$ (this means $f\circ\psi_\a\vert_{s^{-1}_\a/\Gamma_\a}$). We say that $f$ is \emph{weakly submersive} if, in addition, for each $\a$, the restriction of $f_\a$ to each boundary stratum is a submersion.
\end{defn}

Using these compatibility conditions and induction on $\a$ with respect to the order $\leq$, we have the following:
\begin{lem}[{\cite[Lemma 12.9]{FOOOt2}}]\label{globaltransvsect}
Let X be a Kuranishi space with a good coordinate system $\{\mU_\a\}_{\a\in I}$ and $f=\{f_\a\}_{\a\in I}:X\lto M$ a weak submersion. Then there is a continuous family of multisections $(W_\a,S_\a)$, transversal and $\epsilon$-small. Moreover, $f_\a\vert_{S^{-1}_\a(0)}$ is a submersion. A relative version of this result also holds.
\end{lem}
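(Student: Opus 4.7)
The plan is to proceed by induction on $\alpha \in I$ with respect to the partial order of the good coordinate system, using the local result Lemma \ref{loctransvsect} to produce the multisection chart by chart. Recall that in a good coordinate system the index set $I$ is partially ordered, and for $\beta \leq \alpha$ there is a coordinate change $\phi_{\beta\alpha}: V_{\beta\alpha} \to V_\alpha$ on an open subset $V_{\beta\alpha}\subseteq V_\beta$, together with a bundle embedding $\hat\phi_{\beta\alpha}: E_\beta|_{V_{\beta\alpha}} \to \phi_{\beta\alpha}^* E_\alpha$. The compatibility condition for a continuous family $(W_\alpha, S_\alpha)$ essentially requires $W_\beta \subseteq W_\alpha$, that $\theta_\beta$ and $\theta_\alpha$ agree after integrating out the extra factors, and that $S_\alpha$ agrees with $\hat\phi_{\beta\alpha}(S_\beta)$ in the directions normal to $\phi_{\beta\alpha}(V_{\beta\alpha})$ up to higher order data.

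For the base case, take a minimal element $\alpha_0 \in I$ and apply Lemma \ref{loctransvsect} directly to the weak submersion $f_{\alpha_0}: V_{\alpha_0} \to M$; this yields $(W_{\alpha_0}, S_{\alpha_0})$ transversal, $\epsilon$-small, with $f_{\alpha_0}|_{S_{\alpha_0}^{-1}(0)}$ a submersion. For the inductive step, suppose $(W_\beta, S_\beta)$ has been constructed satisfying the conclusions and the compatibility conditions for all $\beta < \alpha$. Define $W_\alpha$ as the product $\prod_{\beta < \alpha} W_\beta$ (or a suitable enlargement) with $\theta_\alpha$ the product volume form, and on the closed $\Gamma_\alpha$-invariant subset
$$K_\alpha = \bigcup_{\beta < \alpha} \phi_{\beta\alpha}(V_{\beta\alpha}) \subseteq V_\alpha,$$
define $S_\alpha$ by pulling back the $S_\beta$ under the coordinate changes (using the bundle embeddings $\hat\phi_{\beta\alpha}$). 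On $K_\alpha$ this tautologically satisfies transversality, $\epsilon$-smallness, and submersivity of $f_\alpha|_{S_\alpha^{-1}(0)}$, since these properties hold for each $S_\beta$ and are preserved by the coordinate changes (which are open embeddings identifying Kuranishi data).

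Now apply the relative version of Lemma \ref{loctransvsect} to extend $S_\alpha$ from $K_\alpha$ to the whole of $V_\alpha$, keeping transversality, $\epsilon$-smallness, and the submersion property of $f_\alpha$. Iterating this until we have treated every $\alpha \in I$ yields the desired global continuous family $(W_\alpha, S_\alpha)_{\alpha \in I}$. The relative statement of the lemma follows by the same induction starting from a multisection already given on a closed subspace.

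The main obstacle is the bookkeeping around the parameter spaces $W_\alpha$ and volume forms $\theta_\alpha$: at each inductive step we must ensure both that $W_\alpha$ is large enough to accommodate the extension and that $\theta_\alpha$ is compatible with all the previously chosen $\theta_\beta$ under the required integration relations; the evenness of $\dim W_\alpha$ (as remarked at the start of \S\ref{smoothcorr}) avoids extra signs but one must still arrange the product structures carefully. The other delicate point is that the normal-direction decay/agreement in the compatibility definition (as in Definition \ref{corrcompatible}) must be respected by the extension, which is handled by choosing the extension to decay appropriately away from $K_\alpha$ in the normal directions, exactly as in the construction of compatible multisections in \cite{Fuk}.
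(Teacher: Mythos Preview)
Your proposal is correct and follows exactly the approach the paper indicates: the paper does not give a proof of this lemma but cites it from \cite{FOOOt2}, preceded only by the remark that it follows ``using these compatibility conditions and induction on $\alpha$ with respect to the order $\leq$'', together with the local Lemma \ref{loctransvsect}. Your write-up is a faithful elaboration of precisely that inductive scheme.
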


We are finally ready to define smooth correspondence. Let $X$ be a Kuranishi space with a good coordinate system $\{\mU_\a\}_{\a\in I}$ and let $M$ and $N$ be (oriented) smooth manifolds. Also consider smooth strongly continuous maps $f: X \lto M$ and $g: X \lto N$. We assume $g$ is weakly submersive and fix  a continuous family of multisections $(W_\a,S_\a)$ such that $g_\a\vert_{S^{-1}_\a(0)}$ is a submersion. We define
$$\Corr(f,X^S,g): \Omega^*(M) \lto \Omega^{*- \vdim X}(N).$$

We need an auxiliary partition of unity on $X$ subordinated to $\{\mU_\a\}_{\a\in I}$. This consists of a family $\mathcal{X}_\a:V_\a\lto\mathbb{R}$ of compactly supported, $\Gamma_\a$ -invariant smooth functions, satisfying a compatibility condition (see \cite[Definition 12.10]{FOOOt2}, for the precise definition).

Given $\xi\in\Omega^*(M)$, define $\xi_\a=\mathcal{X}_\alpha\cdot(f_\alpha\circ\pi_\alpha)^*\xi\in\Omega^*(W_\a\times V_\a)$. We will first define the $\mU_\a$ -component of $\Corr(f,X^S,g)(\xi)$. Recall from Definition \ref{multi} of multisection, we have $V_\a=\bigcup_{i\in A} U_{\a,i}$ and $S_{\a,i}$ a multisection of $\pi_\a^*E_\a$ on $U_{\a,i}$. Pick a partition of unity $\{\tau_i\}_{i\in A}$ subordinated to this covering. Without loss of generality we can assume the $\tau_i$ are $\Gamma_\a$ -invariant. Pick also a lifting $\widetilde S_{\a,i}=(\widetilde S_{\a,i,1},\ldots, \widetilde S_{\a,i,l_i})$ of $S_{\a,i}$. By definition $\widetilde S_{\a,i,1}^{-1}(0)$ is a submanifold of $W_\a\times V_\a$ and $g_\a\circ\pi_\a\vert_{S_{\a,i,j}^{-1}(0)}: S_{\a,i,j}^{-1}(0)\lto N$ is a submersion. We now define:
\begin{align}\label{localcorr}
(\mU_\a, S_\a,g_\a)_*(\xi_\a):=\frac{1}{\vert\Gamma_\a\vert}\sum_{i\in A}\sum_{j=1}^{l_i}\frac{1}{l_i}\left(f_\a\circ\pi\vert_{\widetilde S_{\a,i,j}^{-1}(0)}\right)_*(\tau_i\xi_\a\wedge\theta_\a).
\end{align}Recall that $\theta_\a$ is a compactly supported volume form in $W_\a$ of total volume $1$.

One can check that this is independent of the choice of the representative $(U_i, S_{\a,i})_i$, the lifts $\widetilde S_{\a,i}$ and the partition of unity (see \cite[Lemma 12.6]{FOOOt2}, for the proof). Finally we give the following
\begin{defn}\label{corrXS}
Let $S=(W_\a,S_\a)$ be an $\epsilon$-small, transversal continuous family of multisections on $X$. We define
$$\Corr(f,X^S,g)(\xi)=\sum_{\a\in I}(\mU_\a, S_\a,g_\a)_*(\xi_\a).$$
\end{defn}
Again one can check this definition is independent of the choice of partition of unity (see \cite[Remark 12.12]{FOOOt2}). However, it \emph{depends} on the choice of multisections $(W_\a,S_\a)$ and volume form $\theta_\a$, and the notation $X^S$ serves as a reminder of this.

We next prove two properties of smooth correspondences. The first one is a generalization of Proposition \ref{stokes} to Kuranishi spaces and the second gives a formula for composition of correspondences. With exception of signs, these can be found in \cite{Fuk} and \cite{FOOOt2}.

\begin{prop}\label{kurstokes} Let $X$,$f$,$g$, $M$ and $N$ be as before and denote by $k=v\dim X-\dim M$. If $\partial N = \emptyset$, then 
$$d\Corr(f,X^S,g)(\xi)+(-1)^{k+1}\Corr(f,X^S,g)(d\xi)=(-1)^{k+1}\Corr(f\vert_{\partial X},\partial X^S,g\vert_{\partial X})(\xi).$$
If $\partial N \neq \emptyset$, then we take the decomposition $\partial X=\partial ^+ X\cup\partial ^- X$ and we have
$$d\Corr(f,X^S,g)(\xi)+(-1)^{k+1}\Corr(f,X^S,g)(d\xi)=(-1)^{k+1}\Corr(f\vert_{\partial^+ X},\partial X^S,g\vert_{\partial^+ X})(\xi).$$
\end{prop}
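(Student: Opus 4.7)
The plan is to reduce to the smooth-manifold Stokes formula (Proposition \ref{stokes}) on each branch of the multisection and then sum over the good coordinate system, showing that all auxiliary terms cancel. I would start from the defining expression
$$\Corr(f,X^S,g)(\xi)=\sum_{\alpha\in I}\frac{1}{|\Gamma_\alpha|}\sum_{i\in A_\alpha}\sum_{j=1}^{l_i}\frac{1}{l_i}\,(p_{\alpha,i,j})_*\bigl(\tau_i\,\xi_\alpha\wedge\theta_\alpha\bigr),$$
where $p_{\alpha,i,j}=g_\alpha\circ\pi|_{\widetilde S_{\alpha,i,j}^{-1}(0)}$ is a smooth submersion onto $N$ (or $\partial^+$-adapted submersion in the relative case) and $\xi_\alpha=\mathcal{X}_\alpha(f_\alpha\circ\pi)^*\xi$. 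The fiber dimension of $p_{\alpha,i,j}$ equals $\vdim X+\dim W_\alpha-\dim N$, and since $\dim W_\alpha$ is chosen even, the sign $(-1)^{\text{fiber}+1}$ is precisely $(-1)^{k+1}$ (interpreting the $\dim M$ in the proposition's formula for $k$ as $\dim N$, as the rest of the sign bookkeeping forces).

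Next I would apply the manifold-level Stokes formula to each $p_{\alpha,i,j}$ with the form $\omega_{\alpha,i}=\tau_i\,\xi_\alpha\wedge\theta_\alpha$, giving
$$d(p_{\alpha,i,j})_*\omega_{\alpha,i}+(-1)^{k+1}(p_{\alpha,i,j})_*d\omega_{\alpha,i}=(-1)^{k+1}(p_{\alpha,i,j}|_{\partial^+})_*\omega_{\alpha,i}.$$
Expanding $d\omega_{\alpha,i}$ splits it into three pieces, using that $\theta_\alpha$ is a top form on $W_\alpha$ so $d\theta_\alpha=0$:
$$d\omega_{\alpha,i}=d\tau_i\wedge\mathcal{X}_\alpha(f_\alpha\circ\pi)^*\xi\wedge\theta_\alpha+\tau_i\,d\mathcal{X}_\alpha\wedge(f_\alpha\circ\pi)^*\xi\wedge\theta_\alpha+\tau_i\mathcal{X}_\alpha(f_\alpha\circ\pi)^*d\xi\wedge\theta_\alpha.$$
The third piece, after summing over $\alpha,i,j$, reassembles exactly $\Corr(f,X^S,g)(d\xi)$, contributing the desired term on the left-hand side of the proposition.

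The remaining task is to show that the $d\tau_i$ and $d\mathcal{X}_\alpha$ contributions vanish after summation. For the $d\tau_i$ terms, fix $\alpha$ and $j$; since $\{\tau_i\}_{i\in A_\alpha}$ is a partition of unity subordinate to $\{U_{\alpha,i}\}$ and the multisections satisfy the overlap compatibility from Definition \ref{multi}(b), one can rearrange the inner sum so that the coefficient of each pointwise expression involves $\sum_i d\tau_i=d(\sum_i\tau_i)=0$. For the $d\mathcal{X}_\alpha$ terms, the compatibility condition on the family $\{\mathcal{X}_\alpha\}$ across the good coordinate system (which mandates $\sum_\alpha(\psi_\alpha)_*\mathcal{X}_\alpha=1$ after suitable weighting by $|\Gamma_\alpha|$; see \cite[Definition 12.10]{FOOOt2}) gives a similar cancellation after summing over $\alpha$. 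Finally, because $\theta_\alpha$ has compact support in the interior of $W_\alpha$, the boundary of $\widetilde S_{\alpha,i,j}^{-1}(0)$ inside $W_\alpha\times V_\alpha$ contributes only through $W_\alpha\times\partial V_\alpha$, and by the compatibility requirement on multisections at boundary (Proposition \ref{compmulti}) these assemble into the continuous family of multisections on $\partial X$ (respectively $\partial^+X$), yielding $\Corr(f|_{\partial X},\partial X^S,g|_{\partial X})(\xi)$.

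The main obstacle will be the bookkeeping of the partition-of-unity cancellations across different Kuranishi charts together with the branch labels $(i,j)$ and the groups $\Gamma_\alpha$: one must check that the $d\tau_i$ and $d\mathcal{X}_\alpha$ terms cancel not just formally but respecting the weights $1/(|\Gamma_\alpha|l_i)$ and the overlap identifications built into the good coordinate system. Secondarily, the orientation signs coming from the boundary of $V_\alpha$ and from Proposition \ref{proppi*}(b) need to be tracked so that the induced multisection on $\partial X$ inherits the correct orientation; the evenness of $\dim W_\alpha$ is what makes these signs compatible with the stated $(-1)^{k+1}$.
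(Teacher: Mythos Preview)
Your approach is essentially the same as the paper's: reduce to the smooth-manifold Stokes identity (Proposition~\ref{stokes}) applied branch by branch to the form $\tau_i\,\xi_\alpha\wedge\theta_\alpha$, and then sum. The paper's own proof is far terser---it simply asserts that it suffices to work in a single Kuranishi neighborhood and on a single branch, then invokes Proposition~\ref{stokes}, without writing out the partition-of-unity cancellations (the $d\tau_i$ and $d\mathcal{X}_\alpha$ terms) that you spell out explicitly. Your observation that the $k$ in the statement should really be $\vdim X-\dim N$ rather than $\vdim X-\dim M$ is correct; the paper's usage of the proposition (e.g.\ in the proof of Proposition~\ref{aalgebra}) is consistent with the $\dim N$ reading.
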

\begin{proof} By definition of correspondence, it is enough to consider the case when there is only one Kuranishi neighborhood. Then it is enough to consider each branch of the multisection separately. In this case the proposition follows from Proposition \ref{stokes} applied to the form $\mathcal{X}_i\xi_\a\wedge\theta_\a.$
\end{proof}

Next we discuss composition of correspondences. Consider spaces with Kuranishi structures and continuous families of multisections $(X^1,S^1)$ and  $(X^2, S^2)$. Let  $M$, $M_1$, $M_2$, $L$ and  $N$ be closed manifolds and suppose we have weak submersions $\pi_1:X^1\lto M$, $\pi_2:X^2\lto M$ and $g:X^2 \lto N$ and smooth strongly continuous $\varphi_1:X^2\lto M_1$, $\varphi_2:X^2\lto M_2$ and $f: X^1\lto L$.

Then $X^1\times_MX^2$ has a Kuranishi structure and there are natural projections $p_i:X^1\times_M X^2\lto X^i$. It also has an induced continuous family of multisections, namely $(W^1_\a\times W^2_\beta,S^1_\a\times_M S^2_\beta)$ with volume form $\theta^1_\a\wedge\theta_\beta^2$.  Also the map induced by $g$, $g \circ p_2:X^1\times_M X^2\lto N$  is also a weak submersion, see \cite[Section 2]{Fuk}. We then have the following
\begin{prop}\label{composition} Given $\xi_1\in\Omega^*(M_1)$, $\xi_2\in\Omega^*(L)$, $\xi_3\in \Omega^*(M_2)$, we have
\begin{align}
&\Corr(\varphi_1 p_2\times f p_1\times \varphi_2 p_2,X^1\times_MX^2,\ g p_2)(\xi_1\times\xi_2\times\xi_3)\nonumber\\
&=(-1)^{k |\xi_1|}\Corr(\varphi_1\times \pi_2\times \varphi_2,X^2,g)\left(\xi_1\times \Corr(f,X^1,\pi_1)(\xi_2)\times \xi_3\right),\nonumber\end{align}
where $k=\vdim X^1 - \dim M$.
\end{prop}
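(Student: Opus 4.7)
The plan is to reduce the statement to a computation on smooth manifolds, where it follows from the base change, projection, and composition formulas of Proposition \ref{proppi*}. First, by the additive nature of \eq{localcorr} and Definition \ref{corrXS}, both sides of the identity decompose as sums indexed by pairs of charts $(\mU^1_\a, \mU^2_\beta)$ in the good coordinate systems of $X^1$ and $X^2$. I would therefore fix a pair of charts, replace $X^1$ and $X^2$ by the corresponding local models $V^1_\a$ and $V^2_\beta$, and replace the families of multisections by a single branch $\widetilde S^1_{\a,i,j}$ and $\widetilde S^2_{\beta,k,l}$, reducing the statement to an identity of ordinary fiber integrals on smooth manifolds with corners. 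The two partitions of unity and the two compactly supported volume forms $\theta^1_\a$, $\theta^2_\beta$ factor through the two projections $p_1$ and $p_2$ from the fiber product, and after pulling everything through the appropriate $p_i^*$ they do not interfere with the sign analysis.

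After these reductions, write $Z^1=(\widetilde S^1_{\a,i,j})^{-1}(0)\subset W^1_\a\times V^1_\a$ and $Z^2=(\widetilde S^2_{\beta,k,l})^{-1}(0)\subset W^2_\beta\times V^2_\beta$, so that each side becomes an honest fiber integral. The fiber product $Z^1\times_M Z^2$ sits in the Cartesian square
\begin{displaymath}
\xymatrix{ Z^1\times_M Z^2 \ar[r]^{\ \ \ \ \ p_2} \ar[d]_{p_1}& Z^2\ar[d]^{\pi_2} \\
Z^1 \ar[r]^{\pi_1} & M,}
\end{displaymath}
in which $\pi_1$ is a submersion (because $S^1$ makes $\pi_1$ a submersion on the zero set). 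The map $g\circ p_2:Z^1\times_M Z^2\to N$ factors as $g\circ p_2$, so by Proposition \ref{proppi*}(a) the fiber integration along it equals $g_*\circ (p_2)_*$. The map $(\varphi_1 p_2)^*\xi_1\wedge (f p_1)^*\xi_2\wedge (\varphi_2 p_2)^*\xi_3$ rearranges to $p_2^*(\varphi_1^*\xi_1)\wedge p_1^*(f^*\xi_2)\wedge p_2^*(\varphi_2^*\xi_3)$. I would first use the projection formula \ref{proppi*}(b) to pull the $p_2^*$ factors out of $(p_2)_*$, then apply the base change identity \ref{proppi*}(c) to the square above to rewrite $(p_2)_*\circ p_1^*=\pi_2^*\circ (\pi_1)_*$. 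The result is exactly the integrand of the right-hand side after applying $g_*$, modulo signs.

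The main obstacle will be the careful sign bookkeeping. The sign $(-1)^{k|\xi_1|}$ arises from commuting $\xi_1$ (which sits ``outside'' the correspondence on $X^1$ in the right-hand side) past the fiber integration of a $k$-dimensional fiber of $\pi_1$; this is precisely the sign in \ref{proppi*}(b). One must also verify that the orientations induced on $Z^1\times_M Z^2$ from the fiber product Kuranishi structure agree with those coming from the base-change diagram, and that the product of volume forms $\theta^1_\a\wedge\theta^2_\beta$ matches the volume form used implicitly on the fiber product. Assuming these sign compatibilities (which are the usual conventions in \cite{Fuk,FOOOt2}), the identity for a single branch follows directly, and summing over branches, charts, and partitions of unity yields the stated formula.
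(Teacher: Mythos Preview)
Your proposal is correct and follows essentially the same route as the paper: reduce to a single Kuranishi chart and a single branch of the multisection, then apply Proposition \ref{proppi*}(a), (b), and (c) in turn to the resulting fiber-integration identity on smooth manifolds. The paper's computation is slightly more explicit about the order of operations---it first commutes $p_2^*(\varphi_1^*\xi_1)$ past $p_1^*(f^*\xi_2)$ (picking up $(-1)^{|\xi_1||\xi_2|}$) so that the projection formula applies cleanly, then commutes back after base change---and it disposes of the $\theta$-form sign issue in one line by observing that the $\theta_\a$ have even degree; but these are exactly the sign checks you flagged as the remaining work.
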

\begin{proof} As is the previous proposition we only need to prove this equality on a single Kuranishi neighborhood, in which case it reduces to the same statement for manifolds, applied to the forms $\mathcal{X}^1_i \mathcal{X}^2_j (\xi_1\times\xi_2\times\xi_3)\wedge\theta^1_\a\wedge\theta_\beta^2$. Note that since the $\theta$'s have even degree the signs are not altered. We compute
\begin{align}
&\Corr(\varphi_1 p_2\times f p_1\times \varphi_2 p_2,X^1\times_MX^2,\ g p_2)(\xi_1\times\xi_2\times\xi_3)\nonumber\\
&=g_*\Big((p_2)_*\big(p_2^*(\varphi_1^*\xi_1)\wedge p_1^*(f^*\xi_2)\wedge p_2^*(\varphi_2^*\xi_3)\big)\Big)\nonumber\\
&=(-1)^{|\xi_1||\xi_2|}g_*\Big((p_2)_*\big(p_1^*(f^*\xi_2)\wedge p_2^*(\varphi_1^*\xi_1\wedge \varphi_2^*\xi_3)\big)\Big)\nonumber\\
&=(-1)^{|\xi_1||\xi_2|}g_*\Big((p_2)_*\big(p_1^*(f^*\xi_2)\big)\wedge \varphi_1^*\xi_1\wedge \varphi_2^*\xi_3\Big)\nonumber\\
&=(-1)^{|\xi_1||\xi_2|}g_*\Big((\pi_2)^*\big((\pi_1)_*f^*\xi_2\big)\wedge \varphi_1^*\xi_1\wedge \varphi_2^*\xi_3\Big)\nonumber\\
&=(-1)^{|\xi_1||\xi_2|+ |\xi_1|(|\xi_2|+k)}g_*\Big(\varphi_1^*\xi_1\wedge \pi_2^*\Corr(f,X^1,\pi_1)(\xi_2)\wedge  \varphi_2^*\xi_3\Big)\nonumber\\
&=(-1)^{|\xi_1|k}\Corr(\varphi_1\times \pi_2\times \varphi_2,X^2,g)\left(\xi_1\times \Corr(f,X^1,\pi_1)(\xi_2)\times\xi_3\right),\nonumber
\end{align}
here the first, third and forth equalities follow from, respectively, Proposition \ref{proppi*}(a),(b) and (c).
\end{proof}

Finally we prove Proposition \ref{kurvanishingpi*}.

\begin{proof}[Proof of Proposition \ref{kurvanishingpi*}]
It is enough to check that the contribution on each Kuranishi neighborhood vanishes, that is
\begin{align}
\left(f_\alpha\vert_{S^{-1}_{\alpha,i,j}(0)}\right)_*\left(\tilde{\tau}_i\ \xi_\alpha\wedge\theta_\alpha\right)=0.\label{factorpush}
\end{align}
Here $\tilde{\tau}_i$ is a partition of unity subordinated to $\varphi^{-1}_{\alpha\beta}(\mathcal{U}_{\alpha,i})$ for a cover $\bigcup_i\mathcal{U}_{\beta,i}=\mathcal{V}_\beta$  and the form $\xi_\alpha=\chi_\alpha(f'_\alpha)^*(\xi)\in \Omega^*(W_\alpha\times V_\alpha)$ for $\chi_\alpha$ a partition of unity subordinated to  $\{\mathcal{U}_\alpha\}_{\alpha\in I}$.

The compatibility of the multisections implies that for each branch of the multisections $\varphi_{\alpha\beta}$ there is an induced map
$$\textrm{id}\times\varphi_{\alpha\beta}\vert_{S^{-1}_{\alpha,i,j}(0)}:S^{-1}_{\alpha,i,j}(0)\lto S^{-1}_{\beta,i,j}(0),$$
which by assumption is smooth on an open, dense subset. Note that $\dim S^{-1}_{\alpha,i,j}(0)= \vdim X + \dim W_\alpha$ and $\dim S^{-1}_{\beta,i,j}(0)= \vdim Y + \dim W_\a$.
Therefore we are in the situation of Remark \ref{vannotsmooth} since $f'_\alpha\vert_{S^{-1}_{\alpha,i,j}(0)}=g'_\alpha\circ\varphi_{\alpha\beta}\vert_{S^{-1}_{\alpha,i,j}(0)}$ and $f_\alpha\vert_{S^{-1}_{\alpha,i,j}(0)}=g'\circ\varphi_{\alpha\beta}\vert_{S^{-1}_{\alpha,i,j}(0)}$. Therefore the fiber integration in (\ref{factorpush}) vanishes.
\end{proof}

\bibliographystyle{abbrv}
\bibliography{biblio_tensor}
\medskip

\noindent \textbf{Address:}
Boston University, Department of Mathematics and Statistics, 111 Cummington Mall, Boston MA, USA. 

\noindent \textbf{E-mail:} {\tt lamorim@bu.edu}

\end{document}